\documentclass[aos]{imsart}

\RequirePackage{amsthm,amsmath,amsfonts,amssymb}
\RequirePackage[numbers,sort&compress]{natbib}
\RequirePackage[colorlinks,citecolor=blue,urlcolor=blue]{hyperref}
\RequirePackage{graphicx}

\usepackage{amsmath}
\usepackage{xcolor}
\usepackage[numbers]{natbib}
\usepackage[shortlabels]{enumitem}
\usepackage{amssymb}
\usepackage{makecell}
\usepackage{mathtools}
\usepackage[labelfont=bf]{caption}
\usepackage{tikz}
\usepackage{soul}
\usepackage{amsthm}
\usepackage{float}
\usetikzlibrary{arrows}

\usepackage{natbib} 
\usepackage{booktabs}
\usepackage{url}
\usepackage{bbm}
\usepackage{booktabs}
\usepackage{hyperref}
\usepackage{algorithm}
\usepackage{algorithmicx,algpseudocode}

\startlocaldefs
\algnewcommand\algorithmicinput{\textbf{Input: }}
\algnewcommand\INPUT{\State\algorithmicinput}
\algnewcommand\algorithmicoutput{\textbf{Output: }}
\algnewcommand\OUTPUT{\State\algorithmicoutput}

\theoremstyle{plain}

\newtheorem{theorem}{Theorem}[section]
\newtheorem{lemma}[theorem]{Lemma}

\newtheorem*{rem*}{Remark}

\newtheorem{cor}[theorem]{Corollary}
\theoremstyle{definition}

\newcommand{\R}[0]{\mathbb{R}}
\newcommand{\pa}[1]{\left( {#1} \right)}
\newcommand{\bc}[1]{\left\{ {#1} \right\}}
\newcommand{\ba}[1]{\left[ {#1} \right]}

\newcommand{\ip}[1]{\langle\, {#1} \rangle}
\newcommand{\norm}[1]{\left\lVert#1\right\rVert}
\newcommand{\abs}[1]{\left\lvert #1 \right\rvert}

\newcommand{\Ber}[0]{\operatorname{Ber}}

\newcommand{\InvGamma}[0]{\operatorname{Inverse-Gamma}}
\newcommand{\Gam}[0]{\operatorname{Gamma}}
\newcommand{\TV}[0]{\operatorname{TV}}
\newcommand{\KL}[0]{\operatorname{KL}}
\newcommand{\PG}[0]{\operatorname{PG}}
\newcommand{\IG}[0]{\operatorname{IG}}
\newcommand{\TN}[0]{\operatorname{TN}}
\newcommand{\diag}[0]{\operatorname{diag}}
\newcommand{\Probit}[0]{\operatorname{Probit}}
\newcommand{\Logit}[0]{\operatorname{Logit}}
\newcommand{\Lasso}[0]{\operatorname{Lasso}}
\newcommand{\Poly}[0]{\operatorname{Poly}}

\newcommand{\Ch}[0]{\operatorname{Ch}}
\newcommand{\sign}[0]{\operatorname{sign}}
\newcommand{\Erf}[0]{\operatorname{Erf}}

\newcommand{\revise}[1]{#1}
\newcommand{\edit}[1]{#1}

\endlocaldefs

\begin{document}

\begin{frontmatter}
\title{Fast Mixing of Data Augmentation Algorithms: Bayesian Probit, Logit, and Lasso Regression}
\runtitle{Fast Mixing of Data Augmentation Algorithms}
\runauthor{LEE AND ZHANG}

\begin{aug}
\author[A]{\fnms{Holden}~\snm{Lee} \ead[label=e1]{hlee283@jhu.edu}}
\and
\author[A]{\fnms{Kexin}~\snm{Zhang}\ead[label=e2]{kzhang91@jhu.edu}}

\address[A]{Department of Applied Mathematics and Statistics,
Johns Hopkins University\printead[presep={ ,\ }]{e1,e2}}

\end{aug}

\begin{abstract}
We propose using a modified conductance-based method to study the mixing time of an important class of two-block Gibbs samplers, the data augmentation (DA) algorithm. Using this method, we prove the first non-asymptotic polynomial upper bounds on mixing times of three important DA algorithms: DA algorithms for Bayesian Probit regression \cite{albert1993bayesian} (ProbitDA) and Bayesian Logit regression \cite{polson2013bayesian} (LogitDA), and Bayesian Lasso Regression \cite{park2008bayesian,rajaratnam2015scalable} (LassoDA). Concretely, for ProbitDA and LogitDA, we demonstrate a tight bound that explicitly depends on the design matrix and prior covariance matrix. Under the assumption that data are independently generated from either a sub-Gaussian or log-concave distribution and properly scaled, the bound implies that with $\eta$-warm start, parameter dimension $d$, and sample size $n$, with high probability over data, the two algorithms require $\mathcal{O}\pa{n\log \pa{\frac{\log \eta}{\epsilon}}}$ steps to obtain samples with at most $\epsilon$ error in TV, KL, or $\chi^2$ distance. Meanwhile, we show that under minimal data assumptions, LassoDA requires $\mathcal{O}\pa{d^2(d\log d +n \log n)^2 \log \pa{\frac{\eta}{\epsilon}}}$ steps to achieve $\epsilon$-accuracy in TV distance. The results are generally applicable to settings with large $n$ and large $d$, including settings with highly imbalanced response data in Probit and Logit regression. We compare them with the best known guarantees of Langevin Monte Carlo and Metropolis Adjusted Langevin Algorithm. We evaluate our theoretical results using numerical examples, and discuss the mixing times of the three algorithms under feasible initialization.
\end{abstract}

\begin{keyword}[class=MSC]
\kwd[Primary ]{60J05}
\kwd{62F15} \kwd{65C05}
\kwd[; secondary ]{62J07} \kwd{62J12}
\end{keyword}

\begin{keyword}
\kwd{MCMC algorithm, Gibbs sampling, data augmentation algorithm, log-concave sampling, non-log-concave sampling, conductance method, isoperimetric inequality}
\end{keyword}

\end{frontmatter}


\section{Introduction} \label{s:intro}
A key task in Bayesian inference is to draw samples from posterior distributions. The data augmentation (DA) algorithm \cite{van2001art,hobert2011data, roy2024data} is a Markov Chain Monte Carlo (MCMC) method that generates auxiliary variables to enable a Gibbs sampling procedure. Ever since the DA algorithms were proposed \cite{tanner1987calculation}, they have been applied to a wide range of models. 
Some of the auxiliary variables are intrinsic to the model, including missing data, unobserved variables, and latent states (see e.g. \cite{dvorzak2016sparse, diebolt1994estimation, joseph1995bayesian, justiniano2008time, cohen2007estimating}). Others carry no explicit meaning. They are introduced purely to facilitate the sampling algorithm.
Although they vary across different models, a typical DA algorithm exhibits a two-block Gibbs sampling structure: To draw samples from the posterior $\pi(\beta|y)$, with $y \in \mathbb{R}^n$ denoting the observed data and $ \beta \in \mathbb{R}^d$ denoting the parameters, it alternatively updates the parameters $\beta$ and the auxiliary variables $z$. Specifically, at the $(m+1)^{th}$ iteration, the algorithm draws sample according to 
\begin{align} \label{e:two-block-model}
    z^{(m+1)} \sim \pi(z|\beta^{(m)}, y) \quad \beta^{(m+1)} \sim \pi(\beta|z^{(m+1)}, y),
\end{align}
where 
the superscript
denotes the iteration at which the sample is drawn. 

DA algorithms, like other Gibbs samplers, are favorable because they are automatic with no user-tuned parameters. This motivates researchers to design DA algorithms for many posterior distributions that are difficult to handle, especially in common Bayesian inference settings. A key challenge is to find auxiliary variables $z$ that make a full set of conditional distributions accessible. Concretely, under~\eqref{e:two-block-model}, an efficient DA algorithm requires: (i) the $\beta$-marginal of the joint distribution matches the posterior, i.e. $\pi(\beta|y)= \int \pi(\beta, z|y) dz $; and (ii) both conditionals $\pi(z|\beta, y)$ and $\pi(\beta|z, y)$ are easy to sample from. Despite the simplicity in implementation, the DA algorithm has a complex structure and additional variables, making its running time the central practical concern. 
This motivates a line of work on theoretical guarantees for the running time of DA algorithms. Roughly speaking, we can describe the running time as the product of the cost per iteration and the number of iterations needed. The cost per iteration is typically easily characterized, which leaves the number of iterations to be of central theoretical interest. In the context of MCMC algorithms, this refers to how fast the underlying Markov chain converges, quantified by the \emph{mixing time},  
the number of iterations needed to get samples within $\epsilon$-distance (in total variation) to the target distribution. 

Among various perspectives of mixing time analysis, a basic theoretical question is to understand the \textit{quantitative} relationship between mixing time and the quantities of interest. Typically, the focus is on how the mixing time scales with the parameter dimension $d$ and the sample size $n$ in nonasymptotic settings. Of particular interest is determining whether the chain has a polynomial dependency (\textit{rapid/fast mixing}) or exponential dependency (\textit{slow mixing}) in $n$ and $d$. Fast mixing results are desirable, as they guarantee the algorithm runs efficiently in high-dimensional and large-sample settings. 

This paper provides a framework to obtain tight quantitative theoretical guarantees for DA algorithms. We demonstrate it on the DA algorithms designed for sampling from posteriors of Bayesian Probit regression (ProbitDA, \cite{albert1993bayesian}), Bayesian Logit regression (LogitDA, \cite{polson2013bayesian}), and Bayesian Lasso regression (LassoDA, \cite{rajaratnam2015scalable, park2008bayesian}). The three algorithms are representative because 
they address standard settings, have attracted long-standing theoretical attention, and are widely used
(see e.g. \cite{nguyen2021financial, xun2017correlated, durante2018bayesian, griffin2020modelling, zhang2018bayesian, grant2016quantitative}). We will introduce them in Section~\ref{s:setup}. 

Notably, despite our specific focus, this study has broader implications. The DA algorithm’s simple two-block structure makes it a prototypical case in general MCMC theory \cite{rajaratnam2015scalable, qin2022wasserstein, qin2021limitations}. We are also among the first to analyze Gibbs samplers and statistically motivated sampling problems under isoperimetric-type assumptions (see Section~\ref{s:general}; see also \cite{ascolani2024entropy, montanari2024provably}), highlighting a promising direction for future work.

Before introducing the main algorithms in Section \ref{s:setup} and main theorems in Section \ref{s:main}, we first introduce the general framework we use to study mixing time.

\subsection{Past work on MCMC convergence theory concerning DA algorithms} \label{s:past} The convergence behaviors of DA algorithms have received long-standing attention, concentrating on ProbitDA, LogitDA, and LassoDA. Nevertheless, a theoretical understanding of this behavior
remains incomplete, especially on how the mixing time scales with $n$ and $d$. 

A large body of early works are devoted to proving geometric ergodicity using drift and minorization conditions (d\&m,  \cite{rosenthal1995minorization,jones2001honest}): \cite{roy2007convergence} for ProbitDA, \cite{choi2013polya} for LogitDA, \cite{khare2013geometric} for the original version of LassoDA in \cite{park2008bayesian},  and \cite{rajaratnam2015scalable} for LassoDA. Geometric ergodicity is a desirable convergence property, which refers to the existence of a geometric convergence rate of the total variation distance to the stationary distribution. These works are only sufficient to show the existence of such a geometric rate, without 
explicit dependence on $n$ and $d$,  or imply an upper bound on mixing time with exponential dependence on $n$ and $d$. The latter point is rigorously developed by \cite{rajaratnam2015mcmc}, who show that the provided geometric convergence rates in \cite{choi2013polya} and \cite{khare2013geometric} converge exponentially fast to one as $n \rightarrow \infty$ or $d \rightarrow \infty$. Furthermore, \cite{qin2022wasserstein} and \cite{qin2021limitations} point out the limitations of d\&m in obtaining tight dependence on $n$ and $d$. 

To improve the early convergence results, recent attention has been drawn towards the dependency of convergence on $n$ and $d$, which is referred to as the  ``convergence complexity''  analysis by \cite{rajaratnam2015mcmc}. In particular, \cite{rajaratnam2015mcmc} demonstrates that the geometric convergence rate of LassoDA's $v$-marginal chain is at most $\frac{d}{n+d-2}$, through constructing a lower bound on the correlation between consecutive $v$ samples and running numerical experiments. Albeit promising, the study does not address the convergence of the joint chain of $(\beta,v)$, which is the complete parameter set of LassoDA. Following \cite{rajaratnam2015mcmc}, \cite{qin2019convergence} improves upon \cite{roy2007convergence}, providing two sets of results supporting that the geometric convergence rate of ProbitDA can be bounded away from one when (i) $d$ is fixed, $n \rightarrow \infty$ or (ii) $n$ is fixed, $d \rightarrow \infty$. To address the problem with both $n$ and $d$ growing, the follow-up work \cite{qin2022wasserstein} demonstrates that the geometric convergence rate can be bounded away from one in particular settings: (i) $n$ and $d$ are arbitrary and the prior provides enough shrinkage, or (ii) $n \rightarrow \infty, d \rightarrow \infty$, and the design matrix has repeated structure. The joint dependency of $n$ and $d$ in general cases remains unknown. Although insightful, the asymptotic results generally have no direct implications for non-asymptotic settings. 

More recently, \cite{johndrow2019mcmc} shows that ProbitDA and LogitDA mix slowly with highly imbalanced response data. In a one-dimensional perfectly imbalanced model with all-one responses, they established conductance upper bounds of $\tilde{\mathcal{O}}(n^{-1/2})$ (which 
imply mixing time lower bounds of $\tilde \Omega(\sqrt{n})$), underperforming a Metropolis–Hastings algorithm. In contrast, we derive mixing time upper bounds in more general settings, including imbalanced responses.

The concurrent work \cite{ascolani2024entropy} pioneers non-asymptotic analysis of general $M$-block Gibbs samplers under strongly log-concave target assumptions. They prove an $\mathcal{O}(M\kappa)$ mixing time guarantee, where $\kappa$ is the condition number, which matches the dependency in our results. While their analysis applies to more general Gibbs samplers, our method has three key advantages. First, in the two-block case, we require an isoperimetric-type condition for only one block, whereas they require it for both, excluding important cases such as LogitDA where $\pi(\beta,z\mid y)$ is non-log-concave. Second, our Cheeger-type isoperimetric assumption is strictly weaker than strong log-concavity. Finally, our framework applies to deterministic-scan Gibbs samplers, which are standard in DA algorithms, while theirs applies to random-scan updates.

\subsection{Modified conductance-based method for controlling mixing time of DA algorithms} \label{s:general}
\footnote{The formal definitions of notations used in this section can be found in Section \ref{s:notation} for general notation, Section~\ref{s:iso} for $\Ch(\pi)$, and Section \ref{s:setup} for the mixing time.} 
To show fast mixing in terms of $n$ and $d$, we draw on a body of mixing time analysis based on convex geometry and isoperimetric inequalities, originating from sampling problems on convex bodies (e.g., \cite{dyer1991random, kannan1997random, lovasz2004hit, lovasz1999hit}). This literature has established fast mixing results for many important algorithms (e.g., \cite{lovasz2003hit, lovasz2007geometry, dalalyan2017further, dwivedi2019log, mangoubi2021mixing, chen2020fast}). Rather than focusing on specific target distributions, studies in this line typically consider general classes of targets satisfying assumptions such as bounded support, log-concavity (see \cite{chewi2023log} for a review), or more generally, isoperimetry.

In particular, we employ the conductance-based method \cite{lovasz1993random, sinclair1989approximate, jerrum1989approximating}, which has proven powerful for analyzing discrete-time Markov chains (e.g., \cite{dwivedi2019log, narayanan2016randomized, chen2018fast, lovasz1999hit, lovasz2007geometry, lovasz2004hit, mou2019sampling, chen2020fast}). We give a compact statement of this method in Theorem~\ref{l:general} and defer a detailed introduction to Sections~\ref{s:cond} and~\ref{s:improved}.

\begin{theorem}[Conductance-based mixing time upper bound] \label{l:general} Let $\Psi$ be a Markov chain with transition kernel $\mathcal{P}:\mathcal{M}_1(\mathbb{R}^d) \to \mathcal{M}_1(\mathbb{R}^d)$ and limiting distribution $\pi \in \mathcal{M}_1(\mathbb{R}^d)$. Suppose the following:
\begin{enumerate}
    \item (Isoperimetry) $\pi$ satisfies Cheeger-type isoperimetric inequality with constant $\Ch(\pi) > 0$.
    \item (One-step overlap) For some $h \in (0,1]$ and $\Delta > 0$, we have $\forall \beta_1, \beta_2 \in \mathbb{R}^d $ s.t. $ \|\beta_1-\beta_2\|_2 \le \Delta$, $\TV(\mathcal{P}_{\beta_1}, \mathcal{P}_{\beta_2}) \le 1-h$.
\end{enumerate}
With $\epsilon$-error tolerance and $\eta$-warm start, the mixing time of $\Psi$ satisfies 
$
t_{\Psi}(\eta, \epsilon) \le c \frac{\Ch(\pi)^2}{\Delta^2 h^2}\log \pa{\frac{\eta}{\epsilon}},
$ where $c$ is a universal constant. 
\end{theorem}

The conductance-based method is generally applicable. The isoperimetry condition guarantees a desirable property of the target distribution, indicating the absence of bottlenecks and a light tail. It potentially covers a large class of distributions and has been a preferred assumption to study sampling problems (see e.g. \cite{vempala2019rapid, wibisono2019proximal}). The one-step overlap condition quantifies the locally bounded variation of the transition kernel and is generally expected to hold for practical samplers. Indeed, if it fails, there exist two arbitrarily close points $\beta_1, \beta_2 \in \mathbb{R}^d$ such that $TV(\mathcal{P}_{\beta_1}, \mathcal{P}_{\beta_2})$ is arbitrarily close to 1, i.e., the distributions have almost disjoint supports. 

The main difficulty is obtaining explicit quantities in the two conditions under concrete settings, especially the one-step overlap for the two-step Gibbs kernels of the DA algorithms. We notice that the second-step kernel $\mathcal{P}_2$ is always a non-expansive map in TV distance. Using this, we prove two sufficient conditions for the one-step overlap in Theorem \ref{l:general}, each depending only on the first-step kernel $\mathcal{P}_1$, which greatly simplifies the analysis. We state these conditions below and defer their proofs to Section \ref{s:general-proof}. Our analyses of ProbitDA and LogitDA verify Condition 2a; for LassoDA we verify only the weaker Condition 2b. As a side remark, when the second step has a more tractable structure, one may instead analyze the flipped chain \cite{roberts2001markov}, which has the same mixing time.

\begin{cor} [Modified Conductance-Based Method for DA chains] \label{t:general} Let $\Psi$ be a DA chain that alternately samples from $\pi(z|\beta)$ and $\pi(\beta|z)$ with $\beta \in \mathbb{R}^d $ and $ z \in \mathbb{R}^k$. We break the kernel into two steps: let $\mathcal{P}_1: (\mathcal{M}_1(\mathbb{R}^d), TV) \to (\mathcal{M}_1(\mathbb{R}^k), TV)$ and $\mathcal{P}_2: (\mathcal{M}_1(\mathbb{R}^k), TV) \to (\mathcal{M}_1(\mathbb{R}^d), TV)$ given by $\nu\mathcal{P}_1(z)=\int \pi(z|\beta)\nu(d\beta)$ for $\nu \in \mathcal{M}_1(\mathbb{R}^d)$ and $\nu\mathcal{P}_2(\beta)=\int \pi(\beta|z)\nu(d z)$ for $\nu \in \mathcal{M}_1(\mathbb{R}^k)$, respectively. 
Either of the two conditions below is sufficient to the one-step overlap condition in Theorem~\ref{l:general}: For some $h \in (0,1]$ and $\Delta > 0$,  
\begin{enumerate} [label=2\alph*.]
    \item (Lipchitzness of the first step kernel)  $\forall \beta_1, \beta_2 \in \mathbb{R}^d$, $TV(\delta_{\beta_1}\mathcal{P}_1, \delta_{\beta_2}\mathcal{P}_1) \le \frac{1-h}{\Delta} \|\beta_1 - \beta_2\|_2$.
    \item (Nontrivial bounded variation of the first step kernel at scale $\Delta$) $\forall \beta_1, \beta_2 \in \mathbb{R}^d $ s.t. $ \|\beta_1-\beta_2\|_2 \le \Delta$, $TV(\delta_{\beta_1}\mathcal{P}_1, \delta_{\beta_2}\mathcal{P}_1) \le 1-h$. 
\end{enumerate}
That is, Theorem~\ref{l:general} still holds if we replace the one-step overlap condition by 2a or 2b. 
\end{cor}

For bounding the isoperimetric constant, the analysis is straightforward when the target is strongly log-concave (i.e., $\pi \propto e^{-f}$  with $f$ strongly convex), and the improved conductance method via log-isoperimetry (Section \ref{s:improved}) yields a double-logarithmic dependence on the warmness parameter. This covers ProbitDA and LogitDA. For weakly log-concave and non-log-concave targets, useful techniques exist but typically require specialized treatment. Beyond results for special cases {\cite[Section 2.3]{chewi2023log}, \cite{holley1986logarithmic, talagrand1991new, talagrand1996transportation, courtade2020bounds, barthe2019spectral}}, general approaches include constructing Lipschitz transport maps from measures with known isoperimetry (e.g., Gaussian) \cite{caffarelli2000monotonicity, kolesnikov2011mass, kim2011generalizationcaffarelliscontractiontheorem, dai2023lipschitz, mikulincer2024brownian}, using the KLS conjecture results \cite{klartag2023logarithmic, bobkov1999isoperimetric, kannan1995isoperimetric, lee2017eldan, eldan2013thin, alonso2015approaching}, and applying flexible transference inequalities \cite{barthe2013transference, milman2010isoperimetric, cattiaux2020poincare}. For LassoDA, whose target is non-log-concave, we propose a new chain transformation technique that preserves mixing time (Lemma \ref{l:transform}), converts the target into a tractable log-concave form, and enables a polynomial guarantee via a transference inequality \cite{milman2012properties}.

\subsection{Our contributions}
In summary, our main contributions are the following.
\begin{enumerate}
\item We propose a modified conductance method (Corollary \ref{t:general}) to control mixing time of DA algorithms, which is also applicable to general two-block Gibbs samplers. 
\item  We apply the method to ProbitDA, LogitDA, and LassoDA under different conditions on initial distributions and data distributions, with results detailed in Section \ref{s:main} and outlined in Table~\ref{tab:result-summary}. These are the first non-asymptotic polynomial guarantees in general settings, in contrast with many previous results with exponential dependency or in restricted settings.

\item We perform numerical experiments to evaluate the tightness of the bounds. The simulations correctly reflect the dependencies predicted by our theoretical results for ProbitDA and LogitDA. See Appendix \ref{s:experiments} for details. 

\item We compare the mixing time of the three DA algorithms with Langevin Monte Carlo and Metropolis Adjusted Langevin Algorithm in terms of cost per iteration and upper bounds of mixing time. See Appendix \ref{s:comparison} for details. 
\end{enumerate}

\begin{table} 
\caption{Summary of $\epsilon$-mixing time in TV distance of DA algorithms for sampling from posteriors of Bayesian probit regression (ProbitDA, \cite{albert1993bayesian}), Bayesian logit regression (LogitDA, \cite{polson2013bayesian}), and Bayesian Lasso (LassoDA, \cite{rajaratnam2015scalable, park2008bayesian}) These statements hide the dependency on parameters of the prior for LassoDA and the log-concave and sub-Gaussian distribution. We refer the readers to the links in the last column for the complete theorems.}
\begin{tabular}{ccccc} \hline
    Algorithm & Initialization & Data Distribution & Mixing Time & Theorem\\  \hline
    ProbitDA or & $\eta$-warm & / &$\mathcal{O}\pa{\|X\|_{\mathrm{op}}^2\lambda_{\max}(B)\log\pa{\frac{\log \eta}{\epsilon}}}$ & ~\ref{t:probitlogit} \\
     LogitDA & $\eta$-warm & \tiny{\makecell{log-concave/ sub-Gaussian, \\  independent and, properly scaled}} &$\mathcal{O}\pa{n\log\pa{\frac{\log \eta}{\epsilon}}}$ & ~\ref{t:main-cor}  \\  
     & feasible &\tiny{\makecell{log-concave/ sub-Gaussian, \\ independent, and properly scaled}} &$\mathcal{O}\pa{n\log\pa{\frac{d \log n}{\epsilon}}}$ & \ref{t:feasible-all} \\ \midrule
    LassoDA & $\eta$-warm & \tiny{\makecell{$\norm{X}_{\mathrm{op}}=\Poly(nd)$ \\  \& $\norm{y}=\Poly(nd)$}} &$\mathcal{O}\pa{d^2(d\log d + n \log n)^2\log\pa{\frac{\eta}{\epsilon}}}$ & ~\ref{t:lasso}\\ 
      & feasible & \tiny{\makecell{$\norm{X}_{\mathrm{op}}=\Poly(nd)$ \\  \& $\norm{y}=\Poly(nd)$}} & \scriptsize{\makecell{$\mathcal{O} \Big(d^2(d\log d + n \log n)^2  \cdot$ \\$
      \pa{d\log d + n \log n + \log \pa{\frac{1}{\epsilon}}}$\Big)}} & \ref{t:feasible-lasso}\\\hline
\end{tabular} 
 \label{tab:result-summary}
\end{table}

\subsection{Notations} \label{s:notation}
We reserve $c$, $c^{\prime}$, and $c^{\prime\prime}$ for universal constants, independent of all the parameters of interest (in particular $n$ and $d$), whose values can change from one occurrence to the other. We commonly employ superscripts $^{\Probit}, ^{\Logit},$ and $^{\Lasso}$ to restrict a general quantity to a particular algorithm, ProbitDA, LogitDA, and LassoDA, respectively. 

\paragraph*{Asymptotic} We say $f(x)=\mathcal{O}(g(x))$ if there exists a universal constant such that $f(x) \le c g(x)$ for all $x$. Similarly, $f(x)=\Omega(g(x))$ if there exists a universal constant such that $f(x) \ge c g(x)$ for all $x$. The notations $\tilde{\mathcal{O}}(g(x))$ and $\tilde{\Omega}(g(x))$ denote, respectively, $\mathcal{O}(g(x))$ and $\Omega(g(x))$ with logarithmic factors suppressed. We use $f(x)=\Poly(g(x))$ to express that $f(x)=\mathcal{O}(P(g(x)))$, where $P(g(x))$ is some polynomial of $g(x)$.

\paragraph*{Matrix} We denote the operator norm of a matrix $A$ by $\| A \|_{\textup{op}}$. If $A$ is a square matrix, we use $\lambda_{\max}(A)$ and $\lambda_{\min}(A)$ to represent its maximum and minimum eigenvalue, respectively. $\mathbb{I}_d$ is the $d$-dimensional identity matrix. $\mathbf{1}_n$ is the $n$-dimensional all-ones vector. 

\paragraph*{Markov chain} We use $\Psi$ to denote a general ergodic Markov chain on $\mathbb{R}^d$, with $\mathcal{P}$ being its Markov transition kernel, $\pi$ being its stationary distribution, and $\nu$ being its initial distribution. We use $\mathcal{P}_x$ as a shorthand for $\delta_x\mathcal{P}$, where $\delta_x$ is the Dirac measure centered at $x$.  

\paragraph*{Probabilistic distance} Let $\mathcal{M}_1(\mathbb{R}^d)$ denote the space of probability measures on $\mathbb{R}^d$. For $\mu_1, \mu_2 \in \mathcal{M}_1(\mathbb{R}^d)$, we use $\TV(\mu_1, \mu_2)$ to denote their total variation distance given by 
\begin{align}\label{e:tv-def}
    \TV(\mu_1, \mu_2) = \sup_{\text{ measurable } A \subseteq \mathbb{R}^d } |\mu_1(A) - \mu_2(A)|
\end{align}
Furthermore, we use $\KL(\mu_1||\mu_2) = \int \log \pa{\frac{d\mu_1}{d\mu_2}} d\mu_1$ and $\chi^2(\mu_1||\mu_2)=\int \pa{\frac{d\mu_1}{d\mu_2}-1}^2 d\mu_2$ to denote their Kullback-Leibler (KL) divergence and $\chi^2$-divergence, respectively.\\

The remainder of the paper is organized as follows. In Section~\ref{s:setup}, we formally introduce the notion of mixing time and the three DA algorithms under study.  In Section~\ref{s:main}, we present the main results of upper bounds on mixing times. Section~\ref{s:proof} is devoted to the proofs of the main results. We conclude in Section~\ref{s:conclude} by discussing several future research directions. We perform numerical
experiments to assess our guarantees and compare our results with the best known guarantees of alternative algorithms in Appendix \ref{s:experiments} and Appendix \ref{s:comparison}, respectively. 

\section{Problem setup}\label{s:setup}
This section is devoted to formally stating the goal of our analysis and introducing the algorithmic details of ProbitDA, LogitDA, and LassoDA. To dive straight into our topic, we assume familiarity with the basic concepts of Markov chains, a rigorous introduction of which can be found in \cite{levin2017markov}. 
\subsection{Mixing time with a warm start}\label{s:mixing-time-intro}

To sample from a target distribution $\pi$ on the state space $\mathbb{R}^d$, one can design a Markov chain $\Psi$ with a Markov transition kernel $\mathcal{P}$ such that starting from any distribution $\nu$, 
the distribution will eventually converge to $\pi$ as the number of iterations $k$ tends to infinity: 
$$
\nu \mathcal{P}^k \rightarrow \pi \quad \text{    as    } \quad k \rightarrow \infty.
$$ \par

The mixing time quantifies the speed of convergence as the number of iterations needed to get $\epsilon$-close to the target distribution. It is not hard to see that the mixing time depends on how close the initial distribution $\nu$ is to $\pi$. For ease of the analysis, we control and measure the distance between $\nu$ and $\pi$ by the notion of \textit{warm start}. Specifically,  for a scalar $\eta \ge 1$, a $\eta$-warm start requires the initial distribution to satisfy
$$
\sup_{A} \frac{\nu(A)}{\pi(A)} \le \eta < \infty
$$ where the supremum is taken over all measurable sets $A \subseteq \mathbb{R}^d$.  Throughout the paper, we denote the \textit{mixing time} of the Markov chain $\Psi$ with $\eta$-warm start to
$\epsilon$-accuracy in TV distance ($\epsilon \in  (0,1)$) by
\begin{align*}
    t_{\Psi}(\eta, \epsilon) = t^{\TV}_{\Psi}(\eta, \epsilon) := \inf \{k \in \mathbb{N}: \TV(\nu \mathcal{P}^k, \pi) \le \epsilon, \text{ for all }\nu \text{ that is a } \eta \text{-warm start}\}.
\end{align*}
We define similarly the mixing time with respect to $\KL$-divergence and $\chi^2$-divergence: for $\textup{d} \in \{\KL, \chi^2\}$
$$
t^{\textup{d}}_{\Psi}(\eta, \epsilon) := \inf \{k \in \mathbb{N}: \textup{d}(\nu \mathcal{P}^k, \pi) \le \epsilon, \text{ for all }\nu \text{ that is a } \eta \text{-warm start}\}.
$$
We aim to obtain an upper bound of mixing time in terms of the sample size $n$ and the dimension of the parameter space $d$. 

\subsection{ProbitDA} \label{s:probit-intro}
\paragraph*{Model} Given the binary response vector $y \in \mathbb{R}^n$, a design matrix $X \in \mathbb{R}^{n\times d}$, and a gaussian prior $\mathcal{N}(b, B)$ with $b \in \mathbb{R}^d$ and $ B \in \mathbb{R}^{d\times d}$, a typical model for Bayesian probit regression is 
\begin{align}  \label{e:probit-model}
    y_i &\sim \Ber(\Phi(x_i^T\beta)) \quad i=1,\ldots,n ,\nonumber\\
    \beta &\sim \mathcal{N}(b, B),
\end{align}
where we denote $\beta \in \mathbb{R}^d$ as the regression coefficients, $y_i$ as the $i^{th}$ entry of $y$, $x_i$ as the $i^{th}$ row of $X$, $\Ber(p)$ as the Bernoulli distribution with parameter $p$, and $\Phi(x)$ as the standard Gaussian c.d.f. at $x$. 

\paragraph*{Posterior} The posterior of this model is 
\begin{align}\label{e:probit-pos}
    \pi(\beta|y) \propto \pi(y|\beta) \pi(\beta) \propto \prod_{i=1}^n (1-\Phi(x_i^T\beta ))^{1-y_i}\Phi(x_i^T\beta)^{y_i} e^{-\frac{1}{2}(\beta-b)^TB^{-1}(\beta-b)}.
\end{align}
\par
\paragraph*{Auxiliary variables and the algorithm} The pioneering work \cite{albert1993bayesian} proposes to introduce $n$ independent latent variables $z$ at each iteration. The key design is to rewrite the data generating process, keeping the dependencies between $y$ and $z$ unchanged: $y_i = \mathbf{1}\{z_i \ge 1\}$, where $z_i \sim \mathcal{N}(x_i^T\beta, 1)$, $i=1,...,n$. Under this model, $\pi(\beta|z,y)=\pi(\beta|z)$ follows standard normal regression results, whereas $\pi(z|\beta, y)$ is distributed as independent $\textit{truncated normals}$.  We use the notation $\TN(\mu, 1; y)$ to denote the normal distribution $\mathcal N(\mu, 1)$ truncated to $[0,\infty)$ if $y=1$, and truncated to $(-\infty, 0]$ if $y=0$. Specifically, $\TN(\mu, 1; 1)$ has a density 
\begin{align}
    f(x) = \frac{e^{-\frac{1}{2}(x-\mu)^2}}{\sqrt{2 \pi} \Phi(\mu)} \mathbbm{1}\{x \ge 0\},
\end{align}
while the density of $\TN(\mu, 1; 0)$ is 
\begin{align}
    f(x) = \frac{e^{-\frac{1}{2}(x-\mu)^2}}{\sqrt{2 \pi} \Phi(-\mu)} \mathbbm{1}\{x \le 0\}.
\end{align}
With this notation, the concrete idea of ProbitDA is to augment the data 
\begin{align}\label{e:trun-normal}
    z_i|\beta, y \sim \TN(x_i^T\beta, 1; y_i) \quad i=1,\ldots,n.
\end{align} 
The ProbitDA goes by alternatively generate samples from $\pi(z|\beta, y)$ and $\pi(\beta|z, y)$ as in Algorithm~\ref{a:probitda}.
\begin{algorithm}[t]
\caption{ProbitDA}
\begin{algorithmic}[1]
\INPUT $X \in \mathbb{R}^{n \times d}, y \in \mathbb{R}^n, b \in \mathbb{R}^d, B \in \mathbb{R}^{d \times d}$
\State Draw $\beta^{(0)}$ from an initial distribution.
\For{$m = 1,2,\ldots$}
  \State Draw independently $z_i^{(m)} \sim \TN(x_i^T\beta^{(m-1)}, 1; y_i), \quad i=1,\ldots,n$.
  \State Draw $\beta^{(m)} \sim \mathcal{N}((B^{-1}+X^TX)^{-1}(B^{-1}b+X^Tz^{(m)}), (B^{-1}+X^TX)^{-1})$.
\EndFor
\end{algorithmic}
\label{a:probitda}
\end{algorithm}

\subsection{LogitDA}
\paragraph*{Model} Bayesian logistic regression has the same setting as Bayesian probit regression in Section~\ref{s:probit-intro} except for the link function. That is, the model becomes
\begin{align} \label{e:logit-model}
    y_i &\sim \Ber(l(x_i^T\beta)) \quad i=1,\ldots,n ,\nonumber\\
    \beta &\sim \mathcal{N}(b, B) , 
\end{align}
where $l(x)=\frac{e^{x}}{1+e^{x}}$ is the logit link function. 

\paragraph*{Posterior} The posterior of this model is 
\begin{align}\label{e:logit-pos}
    \pi(\beta|y) \propto \pi(y|\beta) \pi(\beta) \propto \prod_{i=1}^n \pa{\frac{e^{x_i^T\beta}} {1+e^{x_i^T\beta}} }^{y_i} \pa{\frac{1} {1+e^{x_i^T\beta}} }^{1-y_i} e^{-\frac{1}{2}(\beta-b)^TB^{-1}(\beta-b)}.
\end{align} \par 
\paragraph*{Auxiliary variables and the algorithm} Ever since \cite{albert1993bayesian}, there has been considerable effort devoted to designing an analogous DA algorithm for the Bayesian logistic regression (see e.g. \cite{held2006bayesian, fruehwirth2007auxiliary, polson2013bayesian, zens2023ultimate}). We focus on \cite{polson2013bayesian}. Instead of generating additional truncated normal variables, they 
propose using the \textit{P\'olya-Gamma} random variable and making $n$ independent draws from it in each iteration. The P\'olya-Gamma variables that take two arguments, denoted as $\PG(a,c)$, 
are infinite convolutions of Gamma variables, and have efficient samplers. Three facts about P\'olya-Gamma variables are most related to our study: First, their densities satisfy the following relationship 
\begin{align}\label{e:pg-density}
    f(x; a,c)=e^{-\frac{c^2}{2}x} \cosh^a\pa{\frac{c}{2}}f(x;a,0),
\end{align}
where $f(x; a,c)$ is the density of $\PG(a,c)$. Second, the mean of $\omega \sim \PG(a,c)$ is 
\begin{align}\label{e:pg-mean}
    \mathbb E(\omega)=\frac{a}{2c} \tanh\pa{\frac{c}{2}}.
\end{align} Third, binomial likelihood with logit link can be represented as mixtures of Gaussian with respect to $PG(1,0)$, such that 
\begin{align} \label{e:polya-gamma-integral}
    \frac{e^{\psi y}}{1+e^{\psi}} = \frac{1}{2} e^{(y-1/2)\psi}\int_0^\infty e^{-z\psi^2/2} f(z;1,0) d z
\end{align}
We can then plug \eqref{e:polya-gamma-integral} with $\psi=x_i^T\beta$, $y=y_i$, and $z=z_i$ into \eqref{e:logit-pos} to get $\pi(\beta, z|y)$. By  calculating $\pi(z|\beta,y)$ from $\pi(\beta, z|y)$,  we get the augmented data designed by \cite{polson2013bayesian},
\begin{align}\label{e:polya-draw}
    z_i|\beta,y \sim \PG(1, x_i^T\beta), \quad i=1,\ldots,n.
\end{align} 
The LogitDA 
proceeds by alternately 
generate samples from $\pi(z|\beta, y)$ and $\pi(\beta|z, y)$ as in Algorithm~\ref{a:logitda}.
\begin{algorithm}[t]
\caption{LogitDA}
\begin{algorithmic}[1]
\INPUT $X \in \mathbb{R}^{n \times d}, y \in \mathbb{R}^n, b \in \mathbb{R}^d, B \in \mathbb{R}^{d \times d}$
\State Let $\kappa=y-\frac{1}{2}\mathbf{1}_n$. Draw $\beta^{(0)}$ from an initial distribution.
\For{$m = 1,2,\ldots$}
  \State Draw independently $z_i^{(m)} \sim \PG(1, x_i^T\beta^{(m-1)}), \quad i=1,\ldots,n$.
  \State Let $\Omega^{(m)}=\diag(z^{(m)})$.
  \State Draw $\beta^{(m)} \sim \mathcal{N}((B^{-1}+X^T\Omega^{(m)} X)^{-1}(X^T\kappa+B^{-1}b), (B^{-1}+X^T\Omega^{(m)} X)^{-1})$.
\EndFor
\end{algorithmic}
\label{a:logitda}
\end{algorithm}

\subsection{LassoDA}\label{s:lasso-intro}
\paragraph*{Model}The Lasso \cite{tibshirani1996regression} estimates linear regression coefficients by $L_1$-constrained least squares. Concretely, consider a linear regression model, 
$$
y=\mu \mathbf{1}_n + X\beta+\epsilon,
$$ where $y \in \mathbb{R}^n$ is the response data,  $X \in \mathbb{R}^{n \times d}$ is the matrix of the regressors with centered columns, $\beta \in \mathbb{R}^d$ is the vector of coefficients, and $\epsilon$ is independent and identically distributed mean-zero Gaussian residuals. The Lasso estimates the coefficients by solving the following optimization problem
\begin{align}\label{e:lasso-opt}
    \min_{\beta} \|\tilde{y}-X\beta\|^2_2  + \lambda \|\beta\|_1,
\end{align}
where $\lambda \ge 0$ is a tuning parameter and $\tilde{y}=y-\bar{y} \mathbf{1}_n$ is the centered response vector. Because of the nature of the $L_1$ penalty, the solution of the problem~\eqref{e:lasso-opt} tends to have some coefficients being exactly zero. This excludes non-informative variables and hence makes Lasso useful for variable selection. \par 
\cite{tibshirani1996regression} points out that one can study the Lasso estimate from a Bayesian point of view. They interpret the solution of the problem~\eqref{e:lasso-opt} as the posterior mode of the coefficients under a Laplace (double-exponential) prior. \cite{park2008bayesian} formulate the Bayesian Lasso model as follows: 
\begin{align*}
    y &\sim \mathcal{N}(\mu+X\beta, v \mathbbm{I}_n) \\
    p(\mu) &\propto 1 && \text{independent flat (improper) prior of $\mu$} \\ 
    p(\beta|v) &= \prod_{j=1}^d \frac{\lambda}{2 \sqrt{v}}e^{- \lambda\frac{|\beta_j|}{\sqrt{v}}}  \quad &&\text{conditional Laplace prior of $\beta$}\\
    p(v) &\propto \frac{e^{-\xi/v}}{v^{\alpha+1}}  \quad &&\text{inverse gamma prior of $v$}
\end{align*}
\begin{algorithm}[t]
\caption{LassoDA}
\begin{algorithmic}[1]
\INPUT $X \in \mathbb{R}^{n \times d}, y \in \mathbb{R}^n, \lambda \in \mathbb{R}^{+}, \alpha \in \mathbb{R}^{+}, \xi \in \mathbb{R}^{+} \cup \{0\}$
\State Let $\tilde{y}=y-\bar{y} \mathbf{1}_n$. Draw $\beta^{(0)}, v^{(0)}$ from initial distributions.
\For{$m = 1,2,\ldots$}
  \State Draw independently $\frac{1}{z_j^{(m)}} \sim \IG\pa{\sqrt{\frac{\lambda^2 v^{(m-1)}}{(\beta_j^{(m-1)})^2}}, \lambda^2}$, $j=1,\ldots,d$. Let $D_z^{(m)}=\diag \pa{z^{(m)}}$.
  \State Draw $v^{(m)} \sim \InvGamma \pa{\frac{n+2\alpha-1}{2}, \xi+\frac{\tilde{y}^T(\mathbbm{I}_n-X(X^TX+(D_z^{(m)})^{-1})^{-1}X^T) \tilde{y}}{2}}$.
  \State Draw $\beta^{(m)} \sim \mathcal{N}\pa{\pa{X^TX+\pa{D_z^{(m)}}^{-1}}^{-1}X^T\tilde{y},v^{(m)}\pa{X^TX+\pa{D_z^{(m)}}^{-1}}^{-1}}$.
\EndFor
\end{algorithmic}
\label{a:lassoda}
\end{algorithm}

\begin{figure}[t]
\centering

\tikzstyle{para} = [circle, node distance=5em, minimum height=3 em, minimum width=3em]
\tikzstyle{latent} = [draw, circle, fill=gray!20, node distance=3em, minimum height=3em, minimum width=3em]
\tikzstyle{line} = [draw, -latex']
\tikzstyle{empty} = [circle, node distance=3em, minimum height=3 em, minimum width=3em]
\tikzstyle{block} = [rectangle, draw, fill=gray!20, text width=3.5em, text centered, minimum height=4em]
    
\begin{tikzpicture}[node distance = 4em, auto]
\node [block] (latent) {$ z_1^{(m+1)}$ \\ $\, \vdots$ \\ $\, \vdots$ \\$z_n^{(m+1)}$};
\node [para, left of=latent] (start) {$\beta^{(m)}$};
\node [para, right of=latent] (end) {$\beta^{(m+1)}$};
\node [empty, below of=end] (end2) {};
\path [line] (start) -- (latent);
\path [line] (start) -- (latent);
\path [line] (latent) -- (end);
\path [line] (latent) -- (end);
\end{tikzpicture} \hspace{5em} \tikzstyle{para} = [circle, node distance=3em, minimum height=3 em, minimum width=3em]
\tikzstyle{empty} = [circle, node distance=6em, minimum height=3 em, minimum width=3em]
\tikzstyle{latent} = [circle, fill=gray!40, node distance=3em, minimum height=3em, minimum width=3em]
\tikzstyle{block} = [rectangle, draw, fill=gray!20, text width=3.5em, text centered, minimum height=4em]
\tikzstyle{line} = [draw, -latex']
\begin{tikzpicture}[node distance = 4.5em, auto]
\node [empty, left of=latent] (startmid) {};
\node [block, right of=startmid] (latent) {$ z_1^{(m+1)}$ \\ $\, \vdots$ \\ $\, \vdots$ \\$z_d^{(m+1)}$};
\node [para, above of=startmid] (start) {$v^{(m)}$};
\node [para, below of=startmid] (start2) {$\beta^{(m)}$};

\node [empty, right of=latent] (endmid) {};
\node [para, above of=endmid] (end) {$v^{(m+1)}$};
\node [para, below of=endmid] (end2) {$\beta^{(m+1)} $};
\path [line] (start) -- (latent);
\path [line] (start2) -- (latent);
\path [line] (latent) -- (end);
\path [line] (end) -- (end2);
\path [line] (latent) -- (end2);
\end{tikzpicture} \\
\begin{tikzpicture}
    \tikzstyle{block} = [rectangle,  text width=20em, text centered, minimum height=2em];
    \node[block]{\normalsize{ProbitDA/ LogitDA}};
\end{tikzpicture}
\begin{tikzpicture}
    \tikzstyle{block} = [rectangle,  text width=20em, text centered, minimum height=2em];
    \node[block]{\normalsize{LassoDA}};
\end{tikzpicture}
\caption{Illustration of the transition kernels of ProbitDA, LogitDA, and LassoDA. Here, the arrow represents conditional dependency.}
\label{fig:all-kernel}
\end{figure}

\paragraph*{Posterior}
The model allows the users to perform inference for all three parameters, $\mu, \beta$, and $v$. The joint posterior is
\begin{align} 
    \pi(\mu, v, \beta|y) \propto \pi(y|\mu,\beta,v) \pi(\mu) \pi(\beta|v) \pi(v) 
    \propto \frac{1}{v^{(n+d+2\alpha+2)/2}} e^{-\frac{1}{2v}\|y-\mu  \mathbf{1}_n-X\beta\|_2^2-\lambda\frac{\|\beta\|_1}{\sqrt{v}}-\frac{\xi}{v}}. 
\end{align}
As $\mu$ is rarely of interest, \cite{park2008bayesian} marginalizes it out to consider the posterior of $\beta$ and $v$. Using the fact that $\tilde{y}$ and $X$ is centered or $\mathbf{1}_n^T(\tilde{y}-X\beta)=0$, we have 
\begin{align} \label{e:lasso-pos}
    \pi(\beta, v|y)&\propto \int_\mu \pi(\mu, v, \beta|y) d\mu \propto \frac{1}{v^{(n+d+2\alpha+1)/2}} e^{-\frac{1}{2v}\|\tilde{y}-X\beta\|_2^2-\lambda\frac{\|\beta\|_1}{\sqrt{v}}-\frac{\xi}{v}} .
\end{align} 

\paragraph*{Auxiliary variables and the algorithm}
To generate samples from this posterior, \cite{park2008bayesian} develops a DA algorithm by representing Laplace distribution by a scale mixture of normals with an exponential mixing density \cite{andrews1974scale} (see also later proposals \cite{hans2009bayesian, mallick2014new}), such that for $a>0$
\begin{align}\label{e:laplace-mixture}
\frac{a}{2}e^{-a|\psi|} = \int_0^\infty \frac{1}{\sqrt{2\pi z}} e^{-\psi^2/2z}\frac{a^2}{2} e^{-a^2z/2} dz
\end{align}
We get $\pi(\beta, v|y)$ by plugging in ~\eqref{e:laplace-mixture} with    $\psi=\beta/\sqrt{v}$ and $a=\lambda$ into \eqref{e:lasso-pos}. It turns out the augmented data $z$ is $d$ independent inverse of inverse Gaussian variables. We use IG as a shorthand for inverse Gaussian. Specifically, we have 
\begin{align}
    \frac{1}{z_j}\Big|\beta, v, y \sim \IG\pa{\sqrt{\frac{\lambda^2 v}{\beta_j^2}}, \lambda^2},
\end{align}
where the density of $\IG(\mu, \lambda^\prime)$ is 
$
f(x)=\sqrt{\frac{\lambda^\prime}{2\pi x^3}} \exp\left[ -\frac{\lambda^\prime(x-\mu)^2}{2\mu^2 x}\right], x > 0.
$
There are multiple ways to perform Gibbs sampling for the three blocks of variables $\beta,v,z$.  \cite{park2008bayesian} adopts a three-block structure to iteratively sample from $\pi(z|\beta,v,y), \pi(v|\beta, z, y),$ and $\pi(\beta|v,z,y)$. \cite{rajaratnam2015scalable} proposes an improvement of taking a two-block update, meaning to sample alternately from $\pi(z|\beta,v,y)$ and $\pi(\beta, v| z, y)$, with the latter step splitting into $\pi( v| z, y)$ and $\pi(\beta|v, z, y)$. We focus on this improved algorithm, given as Algorithm \ref{a:lassoda}.  

We provide illustrative graphics for the three algorithms in Figure~\ref{fig:all-kernel}. 

\section{Main results}\label{s:main}
This section presents our main results on mixing time upper bounds for the ProbitDA, LogitDA, and LassoDA. We show the mixing time guarantees with a warm start. Although it simplifies theoretical analysis, a good warm start is rarely available. Because of this, we also provide mixing time guarantees with a feasible starting distribution.



\paragraph*{Mixing time of ProbitDA and LogitDA} We will first present a bound in terms of the design matrix $X \in \mathbb{R}^{n\times d}$ and the prior variance $B \in  \mathbb{R}^{d\times d}$, followed by a corollary with specific dependencies on $n$ and $d$.

\begin{theorem}\label{t:probitlogit} Let $\Psi \in \bc{\Psi^{\Probit}, \Psi^{\Logit}}$ and $\textup{d} \in \{\TV, \KL, \chi^2\}$. We have for any $\eta \ge 1$ and $\epsilon \in (0,1)$, the mixing time of $\Psi$ with $\eta$-warm start and $\epsilon$-error tolerance  satisfies
$$
t^{\textup{d}}_{\Psi}(\eta, \epsilon) \le c \; \|X\|_{\mathrm{op}}^2 \lambda_{\max}(B)
 \log \pa{\frac{\log\eta}{\epsilon}},
$$
where $c$ is a universal constant.
\end{theorem}
See Section~\ref{s:probit-proof} for the proof of Theorem~\ref{t:probitlogit}. Theorem~\ref{t:probitlogit} reveals that the mixing time of ProbitDA and LogitDA is determined jointly by the scale of the design matrix and the prior variance, evaluated by their maximum operator norms. 


In Corollary~\ref{t:main-cor}, we present a reformulation of the bound in Theorem~\ref{t:probitlogit} in terms of $n$
and $d$, under a standard statistical setting. Specifically, we assume that the first column of the design matrix $X$ consists of ones (representing the intercept), while the remaining columns are mean-zero, isotropic random vectors. Notably, if the covariates do not satisfy these conditions, one can apply an affine transformation, using the sample mean and covariance, to bring them into this canonical form, with only a controllable error. Following common practices in high-dimensional Bayesian regression \cite{simpson2017penalising, fuglstad2020intuitive}, we scale the design matrix (excluding the intercept column) by $1/\sqrt{d}$, so that the variance of $x_i^T\beta$ remains roughly constant as $d$ grows. The intercept is left unscaled, so that it would not shrink to zero as $d$ grows. This allows the model to effectively represent unbalanced response data $y$. We further assume $\lambda_{\max}(B)=\mathcal{O}(1)$, which includes, for instance, the case where $B=c\mathbb{I}_d$. To control the operator norm $\norm{X}_{\mathrm{op}}$, we consult results from random matrix theory, specifically non-asymptotic bounds for covariance estimation under sub-Gaussian \cite[Exercise 4.7.3]{vershynin2018high} and log-concave \cite{adamczak2010quantitative, adamczak2011sharp} assumptions.

\begin{cor} \label{t:main-cor}
Suppose that $\Psi \in \bc{\Psi^{\Probit}, \Psi^{\Logit}}$, $\textup{d} \in \{\TV, \KL, \chi^2\}$, and $\lambda_{\max}(B)=\mathcal{O}(1)$. Consider $X=\ba{\mathbf{1}_n \; \frac{1}{\sqrt{d}}\tilde{X}}$, where the rows of $\tilde{X}$, $\{\tilde{x}_i\}_{i=1}^n$, are mean-zero random vectors independently generated from a common distribution $\mathcal{L}$ on $\mathbb{R}^{d-1}$. We denote $\Sigma= \mathbb{E} \ba{ \; \tilde{x_i} \tilde{x_i}^T}$. We have the following bounds on mixing time $t_{\Psi}(\eta, \epsilon)$ under different assumptions on $\mathcal{L}$:

\begin{enumerate}
\item (Sub-Gaussianity) If $\mathcal{L}$ is sub-Gaussian with sub-Gaussian norm $K$, with probability at least $1-2e^{-u}$,
$$t^{\textup{d}}_{\Psi}(\eta, \epsilon) \le c \bc{n + \frac{\|\Sigma\|_{\mathrm{op}}}{d} \ba{n +  c^\prime n K^2 \pa{\sqrt{\frac{d+u}{n}} + \frac{d+u}{n}}}} \log \pa{ \frac{\log \eta}{\epsilon}}.$$
 
\item (Log-concavity) If $\mathcal{L}$ is log-concave, with probability at least $1-\exp(-c^{\prime\prime}\sqrt{d})$, 
$$
t^{\textup{d}}_{\Psi}(\eta, \epsilon) \le c \bc{n + \frac{\|\Sigma\|_{\mathrm{op}}}{d} \ba{n +c^\prime n \pa{\sqrt{\frac{d}{n}}+\frac{d}{n}}}}   \log \pa{\frac{\log \eta}{\epsilon}}.
$$ 
\end{enumerate}
Here, $c, c^\prime, c^{\prime\prime}$ are universal constants.
\end{cor}

We defer the proof of Corollary~\ref{t:main-cor} to Appendix \ref{a:ind-proof}. If we consider the $K$, $\|\Sigma\|_{\mathrm{op}}$, and $u$ to be independent of $n$ and $d$, Corollary~\ref{t:main-cor} implies a $\mathcal{O}\pa{n\log\pa{\frac{\log\eta}{\epsilon}}}$ guarantee for the mixing time of ProbitDA and LogitDA. 

\paragraph*{Mixing time of LassoDA} Lastly, we provide a polynomial mixing time guarantee for LassoDA. The assumptions on the data $X$ and $y$ are mild, as they influence the final bound only through logarithmic factors.
\begin{theorem} \label{t:lasso} We assume that $\norm{X}_{\mathrm{op}}=\Poly(nd)$ and $\norm{y}_2=\Poly(n)$. Given that $n \ge 2-2\alpha$ and a proper prior for the variance parameter $v$ (i.e. $\xi > 0$ and $\alpha > 0$), we have for any $\eta \ge 1$ and $\epsilon \in (0,1)$, the mixing time of LassoDA with $\eta$-warm start and $\epsilon$-error tolerance satisfies
$$
t_{\Psi^{\Lasso}}(\eta, \epsilon) \le c d^2 (d\log d+n \log n)^2 \log \pa{\frac{\eta}{\epsilon}},
$$
where $c$ is a constant depending on $M$, $\lambda$, and $\xi$.
\end{theorem}
See Section~\ref{s:lasso-proof} for the proof of Theorem~\ref{t:lasso}. 

\paragraph*{Mixing time with a feasible start}
To obtain a theorem statement for implementable algorithms without reference to a warm start parameter $\eta$, 
we can explicitly construct initial distributions, bound their warmness parameters $\eta$, and 
plug them into the mixing time upper bounds above. Specifically, we show that there exist feasible initial distributions with $\eta = \mathcal{O}\pa{n^{\frac{d}{2}}}$ for ProbitDA and LogitDA, and $\eta = \mathcal{O}\pa{e^{d\log d + n\log n}}$ for LassoDA. Due to space limitations, we refer interested readers to Appendix \ref{s:main-cold} for the formal theorem statements.

\section{Proofs} \label{s:proof}
Our proofs for upper bounds on mixing times rely on isoperimetric inequalities and the conductance of Markov chains. We will first introduce the techniques and general ideas in Section~\ref{s:proof overview} and dive proofs of theorems in the rest of the subsections. 
\subsection{Proof strategy overview and preliminaries} \label{s:proof overview}

\subsubsection{Isoperimetry} \label{s:iso}
In order to define isoperimetric inequality, we first introduce the notion of the Minkowski content. The \textit{Minkowski content}, or the \textit{boundary measure}, of a measurable set $A \subseteq \mathbb{R}^d$ is defined as 
\begin{align*}
    \pi^{+}(A)=\lim_{r \rightarrow 0^+}\frac{\pi(A^r)-\pi(A)}{r}
\end{align*} where $A^r=\{x \in \mathbb{R}^d: \exists y \in A, \|x-y\| \le r\}$. We say the measure $\pi$ satisfies the \textit{Cheeger-type isoperimetric inequality} with constant $\Ch(\pi) > 0$ if for all measurable set $A \subseteq \mathbb{R}^d$, 
\begin{align*}
    \pi^{+}(A) \ge \frac{1}{\Ch(\pi)} \min \{\pi(A), \pi(A^c)\},
\end{align*} 
and this is the minimal such constant. 
We call $\Ch(\pi)$ the \textit{Cheeger constant} of $\pi$. We will employ the following lemmas to calculate or upper bound the Cheeger constants of the ProbitDA, LogitDA, and LassoDA's target distributions.
    
\begin{lemma} \label{l:known} Let $\pi$ be a probability measure on $\mathbb{R}^d$.
\begin{enumerate}
    \item \cite{bobkov1997isoperimetric,talagrand1991new} If $\pi$ is a product 
    of double exponential measures, that is $\pi(x) = \prod_{i=1}^d \frac{1}{2b} e^{-\frac{|x_i|}{b}}$, we have $\Ch(\pi)=\frac{1}{b}$.
    \item \cite{cousins2014cubic, milman2008isoperimetric}  If $\pi$ is $m$-strongly log-concave, we have $\Ch(\pi)=\mathcal{O}(\frac{1}{\sqrt{m}})$. 
\end{enumerate} 
\end{lemma}
\begin{lemma}[{\cite[Corollary 3.4 (1) and equation (3.7)]{milman2012properties}}] \label{l:transfer} Let $\mu_1, \mu_2$ be two log-concave probability measures. If $\|\frac{d\mu_2}{d\mu_1}\|_{L^\infty} \le \exp(D)$, then 
$
\Ch(\mu_2) \le \mathcal{O}(D)\Ch(\mu_1).
$
\end{lemma}
See Appendix \ref{a:proof-transfer} for the proof of Lemma~\ref{l:transfer}.

\subsubsection{Conductance and mixing time} \label{s:cond}
With the notion of isoperimetry, we are ready to introduce the conductance-based argument for studying the mixing times. Given an ergodic Markov chain on $\mathbb{R}^d$ with transition kernel $\mathcal{P}$ and stationary distribution $\pi$,  we define the  \textit{conductance} as 
\begin{align} \label{e:cond}
    \Phi = \inf_A \frac{\int_A \mathcal{P}_u(A^c)\pi(u) du}{\min\{\pi(A), \pi(A^c)\}}
\end{align}
where $A$ is any measurable set in $\mathbb{R}^d$. The conductance measures how much probability mass flows between measurable partitions of the state space relative to the stationary measure of the two components, whichever is smaller. By the definition, we can expect a high conductance to contribute to fast mixing. The relationship is stated formally in the next lemma. 
\begin{lemma}[Modified version of {\cite[Corollary 1.5]{lovasz1993random}}]\label{l:mtub} Given a reversible Markov chain with nonnegative spectrum,  assuming $\eta$-warm start $\nu$, we have
$$
\TV(\nu \mathcal{P}^k, \pi) \le \frac{1}{2}\sqrt{\eta} e^{-k\Phi^2/2}.
$$
\end{lemma}
\begin{rem*}
    The $\beta$-marginal chain of the DA chain in \eqref{e:two-block-model} is reversible \cite[Lemma 3.1]{liu1994covariance} and has nonnegative spectrum \cite[Lemma 3.2]{liu1994covariance}. 
\end{rem*}

See Appendix \ref{a:mtub-nolazy} for the proof of Lemma~\ref{l:mtub}. This lemma shows that a lower bound on conductance gives an upper bound for the mixing time. The following lemma provides a way to obtain a lower bound on the conductance.  
\begin{lemma}[{\cite[Lemma 7.4.6]{chewi2023log} and \cite[Lemma 2]{dwivedi2019log}}] \label{l:main-cond} Consider a Markov chain on $\mathbb{R}^d$ with transition kernel $\mathcal{P}$ and stationary distribution $\pi$ satisfying the following conditions:
\begin{enumerate}
    \item (Isoperimetry) $\pi$ satisfies a Cheeger-type isoperimetric inequality with $\Ch(\pi)>0$.
    \item 
    (One-step overlap) For all $x,y \in \mathbb{R}^d$ satisfying $\|x-y\|_2 \le \Delta,$ we have $\TV(\mathcal{P}_x,\mathcal{P}_y) \le 1-h$. 
\end{enumerate}
Then, the conductance of the Markov chain satisfies
$
\Phi = \Omega\pa{\frac{h\Delta}{\Ch(\pi)}}.
$
\end{lemma}
See Appendix \ref{a:proof-main} for the proof of Lemma~\ref{l:main-cond}.

One can obtain an upper bound on mixing time by applying the lower bound for $\Phi$ in Lemma~\ref{l:main-cond} to Lemma~\ref{l:mtub} to give that 
$
  \TV(\nu \mathcal{P}^k, \pi) \le \frac{1}{2}\sqrt{\eta} e^{-k\Phi^{2}/2} \le \frac{1}{2}\sqrt{\eta}e^{- c k \frac{\Delta^2}{Ch^2(\pi)}} .
$ For any error tolerance $\epsilon \in (0,1)$, there exists $k \le c \frac{\Ch(\pi)^2}{\Delta^2} \log \frac{\sqrt{\eta}}{\epsilon}$ such that $\TV(\nu \mathcal{P}^k, \pi) \le \epsilon$. This implies 
\begin{align}\label{e:mixingtime-upperbound}
        t_{\Psi}(\eta, \epsilon) \le c \frac{\Ch(\pi)^2}{\Delta^2 h^2} \log \pa{\frac{\eta}{\epsilon^2}}
\end{align}

\subsubsection{An improved technique based on conductance profile} \label{s:improved}
Lemma~\ref{l:mtub} and Lemma~\ref{l:main-cond} comprise the standard conductance-based method for bounding mixing times of Markov chains in general state space, which will result in logarithmic dependence on the warmness parameter (see equation~\eqref{e:mixingtime-upperbound}). Building upon this, \cite{chen2020fast} proposes a technique that leads to mixing time guarantees with double-logarithmic dependence on the warmness parameter. This is a significant improvement especially when the warmness parameter depends exponentially on dimension. The new technique avoids introducing additional polynomial dependence in $n$ or $d$ in this case. 

Instead of requiring the target distributions to satisfy a Cheeger-type isoperimetric inequality, the new technique applies to distributions satisfying a  
log-isoperimetric inequality. Formally, a distribution $\pi$ in $\mathbb{R}^d$ satisfies the \textit{log-isoperimetric inequality} with constant $\Ch_{1/2}(\pi)$ if for any measurable partition $\mathbb{R}^d=S_1 \sqcup S_2 \sqcup S_3$, we have 
\begin{align} \label{e:log-iso}
    \pi(S_3) \ge \frac{1}{2 \Ch_{1/2}(\pi)} d(S_1, S_2)  \min\{ \pi(S_1), \pi(S_2)\} \log^{1/2} \pa{1+\frac{1}{\min\{ \pi(S_1), \pi(S_2)\}}}
\end{align}
where $d(S_1, S_2)=\inf\{\|x-y\|_2:x\in S_1, y\in S_2\}$, and this is the minimal such constant. In particular, the class of strongly log-concave distributions satisfies the log-isoperimetric inequality, as shown in the next lemma.
\begin{lemma}[{\cite[Lemma 16]{chen2020fast}}] \label{l:log-iso-strong}
A $m$-strongly log-concave distribution $\pi$ satisfies the log-isoperimetric inequality \eqref{e:log-iso} with constant $\Ch_{1/2}(\pi)=\frac{1}{\sqrt{m}}$. 
\end{lemma}
With a log-isoperimetric inequality, \cite{chen2020fast} adapts the proof of Lemma~\ref{l:main-cond} to lower bound the whole spectrum of conductance instead of the worst-case conductance. Specifically, they derive a lower bound for the \textit{conductance profile} defined as 
$$
\Phi(v) = \inf_{\pi(A) \in (0,v]} \frac{\int_A \mathcal{P}(u, A^c)\pi(u)du}{\pi(A)} \quad \text{ for any $v \in \left(0, \frac{1}{2}\right]$}.
$$
One can see that the standard conductance in equation~\eqref{e:cond} is indeed the conductance profile with $v=\frac{1}{2}$ 
and is the least possible conductance profile over $(0, \frac{1}{2}]$. The next lemma states the lower bound on the conductance profile they obtain. 
\begin{lemma}[{\cite[Lemma 4]{chen2020fast}}]\label{l:main-improved}
Consider a Markov chain on $\mathbb{R}^d$ with transition kernel $\mathcal{P}$ and stationary distribution $\pi$ satisfying the following conditions:
\begin{enumerate}
    \item (Log-Isoperimetry) $\pi$ satisfies a log-isoperimetric inequality~\eqref{e:log-iso} with $\Ch_{1/2}(\pi)>0$.
    \item 
(One-step overlap) For all $x,y \in \mathbb{R}^d$ satisfying $\|x-y\|_2 \le \Delta,$ we have $\TV(\mathcal{P}_x,\mathcal{P}_y) \le 1-h$.
\end{enumerate}    
Then, the conductance profile of the Markov chain satisfies
$$
\Phi(v) = \Omega\Bigg(\frac{h\Delta}{\Ch_{1/2}(\pi)}\log^{1/2}\pa{1+\frac{1}{v}}\Bigg) \quad \text{ for any $ v \in \left(0, \frac{1}{2}\right]$.}
$$
\end{lemma}

Similar to conductance, the conductance profile can be used to upper bound the mixing time. This is formally stated in the next lemma, which utilizes the 
\textit{extended conductance profile $\tilde{\Phi}(v)$} defined as  $\tilde{\Phi}(v)= \left\{
\begin{array}{ll}
      \Phi(v) & v \in (0, \frac{1}{2}] \\
      \Phi(\frac{1}{2}) & v \in [\frac{1}{2}, \infty) \\
\end{array} \right.
.$

\begin{lemma}[{Modified Version of \cite[Lemma 3]{chen2020fast}}] \label{l:mixingtime-upperbound-improved} Consider a reversible, irreducible, and smooth\footnote{In our cases, the existence of a transition kernel guarantees the Markov chain to be smooth. We refer readers to \cite{chen2020fast} for the formal definition of smoothness.} Markov chain $\Psi$ with nonnegative spectrum and stationary distribution $\pi$. Then, for any error tolerance $\epsilon>0$, and a $\eta$-warm distribution, the mixing time of the chain in $\chi^2$ is bounded as 
$
t^{\chi^2}_{\Psi}(\eta, \epsilon) \le  \int_{4/\eta}^{8/\epsilon} \frac{ 16 d v}{v \tilde{\Phi}^2(v)}.
$
\end{lemma}

See Appendix \ref{a:mtub-nolazy-log} for the proof of Lemma~\ref{l:mixingtime-upperbound-improved}.

One can further lower bound the conductance profile in Lemma~\ref{l:main-improved} by $\Omega(\frac{h \Delta}{\Ch_{1/2}(\pi)} \log^{1/2}(\frac{1}{v}))$ and apply it to Lemma~\ref{l:mixingtime-upperbound-improved}, which implies the following useful bound on the mixing time: 
\begin{align}\label{e:mixingtime-upperbound-improved}
    t^{\textup{d}}_{\Psi}(\eta, \epsilon) \le c \frac{\Ch_{1/2}^2(\pi)}{\Delta^2 h^2}\log \pa{\frac{\log \eta}{\epsilon}} \quad \text{ for $\textup{d} \in \{\TV, \KL, \chi^2\}$.}
\end{align}
Here, we use $t^{\TV}_{\Phi}(\eta, \epsilon) \le t^{\chi^2}_{\Phi}(\eta, 4\epsilon^2)$ and $t_\Phi^{\KL}(\eta, \epsilon) \le t^{\chi^2}_{\Phi}(\eta, \epsilon)$,  which follow from the inequalities $2\TV(\mu_1, \mu_2) \le \sqrt{\chi^2(\mu_1||\mu_2)}$ and $\KL(\mu_1||\mu_2) \le \chi^2(\mu_1||\mu_2)$.

In the following sections, after proving Corollary~\ref{t:general}, we dive into the proofs for mixing time upper bound for ProbitDA, LogitDA, and LassoDA. Thanks to the strong log-concavity and Lemma~\ref{l:log-iso-strong}, we can use the improved technique in Section~\ref{s:improved} for ProbitDA and LogitDA. We turn to the standard method in Section~\ref{s:cond} to analyze LassoDA. 


\subsection{Proof of Corollary~\ref{t:general}}\label{s:general-proof}
\begin{proof}
Suppose Condition 2b is true, for any $\beta_1, \beta_2 \in \mathbb{R}^d$ s.t. $\|\beta_1-\beta_2\|_2\le \Delta$, using the data processing inequality, we have $\TV(\mathcal{P}_{\beta_1}, \mathcal{P}_{\beta_2}) = \TV(\delta_{\beta_1}\mathcal{P}_1 \mathcal{P}_2, \delta_{\beta_1}\mathcal{P}_1\mathcal{P}_2)\le \TV(\delta_{\beta_1}\mathcal{P}_1, \delta_{\beta_1}\mathcal{P}_1) \le 1-h$, giving the  one-step overlap condition. Condition 2a implies Condition 2b, and thus is sufficient for the one-step overlap condition.

\end{proof}

\subsection{Proof of Theorem~\ref{t:probitlogit}} 
\label{s:probit-proof}
We prove the bound for ProbitDA and LogitDA separately. 
\subsubsection{Proof of Theorem~\ref{t:probitlogit}: ProbitDA}
\begin{proof} 
The proof will be structured as verifying the two conditions in Lemma~\ref{l:main-improved} and then applying equation~\eqref{e:mixingtime-upperbound-improved}. 
\paragraph*{Log-isoperimetry} The posterior $\pi^{\Probit} \propto e^{-f^{\Probit}}$, defined in Equation~\eqref{e:probit-pos} and Equation~\eqref{e:logit-pos}, are strongly log-concave, which will be clear shortly. We will, therefore, establish log-isoperimetry of $\pi^{\Probit}$ using Lemma~\ref{l:log-iso-strong}. This requires us to calculate a lower bound of the minimum eigenvalue of the Hessian of $f^{\Probit}$, or $\lambda_{\min}(\nabla^2f^{\Probit})$. 

Let $\phi(x)$ be the standard Gaussian pdf at $x$. Noting that $\phi^{\prime}(x)=-x\phi(x)$, we have
\begin{align}
\nabla f^{\Probit}(\beta) &= -\sum_{i=1}^{n} y_i x_i\frac{\phi(x_i^T\beta) }{\Phi(x_i^T\beta)} + \sum_{i=1}^{n}(1-y_i) x_i\frac{\phi(x_i^T\beta)}{1-\Phi(x_i^T\beta)} + B^{-1}(\beta-b) \label{e:probit-log-gradient}\\
\nabla^2 f^{\Probit}(\beta) &= - \sum_{i=1}^{n} y_i x_i\pa{\frac{-x_i^T\beta\phi(x_i^T\beta)x_i\Phi(x_i^T\beta)-\phi^2(x_i^T\beta)x_i}{\Phi(x_i^T\beta)^2}}^T \nonumber\\
&\quad + \sum_{i=1}^{n} (1-y_i) x_i\pa{\frac{-x_i^T\beta\phi(x_i^T\beta)x_i(1-\Phi(x_i^T\beta))+\phi^2(x_i^T\beta)x_i}{(1-\Phi(x_i^T\beta))^2}}^T + B^{-1} \nonumber\\ 
&= \sum_{i=1}^{n} y_i \pa{\frac{\phi^2(x_i^T\beta)}{\Phi^2(x_i^T\beta)}+x_i^T\beta\frac{\phi(x_i^T\beta)}{\Phi(x_i^T\beta)}}x_i x_i^T \nonumber\\ & \quad + \sum_{i=1}^{n} (1-y_i)\pa{\frac{\phi^2(-x_i^T\beta)}{\Phi^2(-x_i^T\beta)}-x_i^T\beta\frac{\phi(-x_i^T\beta)}{\Phi(-x_i^T\beta)}} x_i x_i^T + B^{-1}. \nonumber\\
&= \sum_{i=1}^{n} y_i q(x_i^T\beta)x_i x_i^T + \sum_{i=1}^{n} (1-y_i)q(-x_i^T\beta)x_i x_i^T + B^{-1}. \nonumber 
\end{align}
where the quantity 
\begin{align}
\label{e:q}
q(x)&=\frac{\phi^2(x)}{\Phi^2(x)}+x\frac{\phi(x)}{\Phi(x)}
\end{align}
is the negative derivative of the inverse Mill's ratio of the standard normal distribution, which is bounded between $(0,1)$ \cite{sampford1953some}. This implies $\lambda_{\min}(\nabla^2f^{\Probit}) \ge \lambda_{\min}(B^{-1})$.

Indeed, $\pi^{\Probit}$ is strongly log-concave, because $$\lambda_{\min}(\nabla^2 f^{\Probit}) \ge \lambda_{\min}(B^{-1}) = \frac{1}{\sqrt{\lambda_{\max}(B)}} > 0. $$ By Lemma~\ref{l:log-iso-strong}, we have
$$
\Ch_{1/2}(\pi^{\Probit})\le \frac{1}{\sqrt{\lambda_{\min}(\nabla^2 f^{\Probit})}} \le \sqrt{\lambda_{\max}(B)}.
$$

\paragraph*{One-step overlap}  Let $\mathcal{P}_1$ be the first-step kernel as defined in Corollary~\ref{t:general}. Consider $\beta_1, \beta_2 \in \mathbb{R}^d$. We have 
\begin{align*}
\TV\pa{\mathcal{P}^{\Probit}_{\beta_1}, \mathcal{P}_{\beta_2}^{\Probit}} & \overset{(i)}{\le} \TV\pa{\delta_{\beta_1}\mathcal{P}^{\Probit}_1, \delta_{\beta_2}\mathcal{P}^{\Probit}_1}  \overset{(ii)}{\le} \sqrt{\frac{1}{2} \KL\pa{\delta_{\beta_1}\mathcal{P}^{\Probit}_1||\delta_{\beta_2}\mathcal{P}^{\Probit}_1)}} \\&\overset{(iii)}{=} \sqrt{\frac{1}{2} \sum_{i=1}^n \KL(TN(x_i^T\beta_1, 1; y_i)||TN(x_i^T\beta_2,1; y_i))} 
\end{align*}
where we obtain (i) by data processing inequality (DPI), (ii) by Pinsker's inequality, and (iii) by independence of auxiliary variables. This reduces the problem to studying the KL divergence of 1-dimensional distributions: $\KL(TN(x_i^T\beta_1, 1; y_i)||TN(x_i^T\beta_2,1; y_i))$. 

First, we consider $y_i=1$. Below, $\mathbb E_{\beta_1}$ denotes the expectation taken over $x\sim TN(x_i^T \beta_1, 1;1)$. \begin{align*}
    &\KL(TN(x_i^T\beta_1, 1; 1)||TN(x_i^T\beta_2,1; 1))= \mathbb E_{\beta_1} \log \pa{\frac{\frac{e^{-\frac{1}{2}(x-x_i^T\beta_1)^2}}{\sqrt{2 \pi} \Phi(x_i^T\beta_1)} \mathbbm{1}\{x \ge 0\}}{\frac{e^{-\frac{1}{2}(x-x_i^T\beta_2)^2}}{\sqrt{2 \pi} \Phi(x_i^T\beta_2)} \mathbbm{1}\{x \ge 0\}}} \\
    &= \mathbb E_{\beta_1} \ba{ -\frac{1}{2}(x-x_i^T\beta_1)^2 - \log\Phi(x_i^T\beta_1) + \frac{1}{2}(x-x_i^T\beta_2)^2 + \log\Phi(x_i^T\beta_2)} \\
    &= \log\Phi(x_i^T\beta_2) - \log\Phi(x_i^T\beta_1) + \mathbb E_{\beta_1} \ba{\frac{(x-x_i^T\beta_1 + x_i^T\beta_1 -x_i^T\beta_2 )^2-(x-x_i^T\beta_1)^2}{2}} \\
    &= \log\Phi(x_i^T\beta_2) - \log\Phi(x_i^T\beta_1) + x_i^T(\beta_1-\beta_2) \mathbb E_{\beta_1} \ba{x-x_i^T\beta_1} + \frac{1}{2}(\beta_1-\beta_2)^Tx_i x_i^T (\beta_1-\beta_2) \\
    &=\log\Phi(x_i^T\beta_2) - \log\Phi(x_i^T\beta_1) + x_i^T(\beta_1-\beta_2)\frac{\phi(x_i^T \beta_1)}{\Phi(x_i^T \beta_1)} + \frac{1}{2}(\beta_1-\beta_2)^Tx_i x_i^T (\beta_1-\beta_2) .
\end{align*} The last equation comes from the fact that $\mathbb E_{\beta_1}[x]=x_i^T\beta_1 + \frac{\phi(x_i^T \beta_1)}{\Phi(x_i^T \beta_1)}$. To study the dependency on $\|\beta_1-\beta_2\|_2$, we define the unit vector $u=\frac{\beta_1-\beta_2} {\|\beta_1-\beta_2\|_2}$ and a function $f_i(t)=\log \Phi(x_i^T(\beta_2+ut))$. One can check that $f_i(0)=\log \Phi(x_i^T\beta_2)$ and $f_i(\|\beta_1-\beta_2\|_2)=\log \Phi(x_i^T\beta_1)$. By taking the second-order Taylor expansion of $f_i(t)$ at $t=\|\beta_1-\beta_2\|_2$,  we have that there exists $t_i \in [0, \|\beta_1-\beta_2\|_2]$ such that \begin{align*}
\log &\Phi(x_i^T\beta_2) = \\&\log\Phi(x_i^T\beta_1) + \frac{\phi(x_i^T\beta_1)}{\Phi(x_i^T\beta_1)}x_i^T(\beta_2-\beta_1)-\frac{1}{2}q(x_i^T(\beta_2+ut_i))(\beta_1-\beta_2)^Tx_ix_i^T(\beta_1-\beta_2)
\end{align*} where $q$ is defined in \eqref{e:q}. 
Plugging this back into the KL divergence formula gives \begin{align}
    \KL(TN(x_i^T\beta_1, 1; 1)||TN(x_i^T\beta_2,1; 1)) &= \frac{1}{2}(1-q(x_i^T(\beta_2+ut_i))) (\beta_1-\beta_2)^Tx_i x_i^T(\beta_1-\beta_2) \nonumber\\ &\overset{(i)}{\le} \frac{1}{2} (\beta_1-\beta_2)^Tx_i x_i^T(\beta_1-\beta_2) \label{e:probit-kl1}
\end{align}
where $(i)$ is due to $q(x) \in (0,1)$ for all $x$. We can derive a similar formula for $y_i=0$: for some $t_i\in [0, \|\beta_1-\beta_2\|_2]$, 
\begin{align}
\KL(TN(x_i^T\beta_1, 1; 0)||TN(x_i^T\beta_2,1; 0)) &=\frac{1}{2}(1-q(-x_i^T(\beta_2+ut_i))) (\beta_1-\beta_2)^Tx_i x_i^T(\beta_1-\beta_2)\nonumber\\ &\le \frac{1}{2} (\beta_1-\beta_2)^Tx_i x_i^T(\beta_1-\beta_2).\label{e:probit-kl2}
\end{align}

Combining Equations \eqref{e:probit-kl1} and \eqref{e:probit-kl2}, we write the upper bound of $\TV(\mathcal{P}^{\Probit}_{\beta_1}, \mathcal{P}^{\Probit}_{\beta_2})$ as 
\begin{align*}
\TV(\mathcal{P}^{\Probit}_{\beta_1}, \mathcal{P}^{\Probit}_{\beta_2}) & \le c\sqrt{\sum_{i=1}^n (\beta_1-\beta_2)^Tx_i x_i^T(\beta_1-\beta_2)}= c \sqrt{(\beta_1-\beta_2)^TX^TX(\beta_1-\beta_2)} \\
&\le c \sqrt{\lambda_{\max}(X^TX)}\|\beta_1-\beta_2\|_2.
\end{align*}

If we choose $\Delta=\frac{1}{2 c \sqrt{\lambda_{\max}(X^TX)} }$ and $h=\frac{1}{2}$, we have $\TV(\mathcal{P}_{\beta_1}, \mathcal{P}_{\beta_2}) \le 1-h$ whenever $\|\beta_1-\beta_2\|_2 \le \Delta$. Theorem~\ref{t:probitlogit} follows if we substitute $\Delta=\frac{1}{2 c \sqrt{\lambda_{\max}(X^TX)}}$, $h=\frac{1}{2}$, and $\Ch_{1/2}(\pi) \le \sqrt{\lambda_{\max}(B)}$ into equation~\eqref{e:mixingtime-upperbound-improved}.
\end{proof}
\subsubsection{Proof of Theorem~\ref{t:probitlogit}: LogitDA}
\begin{proof} The proof proceeds similarly to ProbitDA's. 
\paragraph*{Log-isoperimetry}
We first show that  $\pi^{\Logit}$, defined in~\eqref{e:logit-pos}, is strongly log-concave. We have 
\begin{align}
\nabla f^{\Logit}(\beta) &= -X^Ty + \sum_{i=1}^n \frac{e^{x_i^T\beta}}{1+e^{x_i^T\beta}} x_i + B^{-1}(\beta-b) \label{e:logit-log-gradient}\\
\nabla^2 f^{\Logit}(\beta) &= \sum_{i=1}^n \frac{e^{x_i^T\beta}}{(1+e^{x_i^T\beta})^2}x_i x_i^T + B^{-1} = \sum_{i=1}^n \ba{\frac{1}{4}-\pa{\frac{e^{x_i^T\beta}}{1+e^{x_i^T\beta}} - \frac{1}{2}}^2 } x_i x_i^T +B^{-1} . \nonumber
\end{align}
Since $\frac{e^{x_i^T\beta}}{1+e^{x_i^T\beta}} \in (0,1)$, we can obtain $\lambda_{\min}(\nabla^2 f^{\Logit}) \ge \lambda_{\min}(B^{-1}) = 1/\lambda_{\max}\pa{B} >0$. By Lemma~\ref{l:known}, $\Ch_{1/2}(\pi^{\Logit}) \le \sqrt{\lambda_{\max}(B)}$
\paragraph*{One-Step Overlap} Similar to the ProbitDA's proof, we can obtain that 
$\TV(\mathcal{P}^{\Logit}_{\beta_1}, \mathcal{P}^{\Logit}_{\beta_2}) \le \sqrt{\frac{1}{2}\sum_{i=1}^n \KL(PG(1, x_i^T\beta_1)||PG(1,x_i^T\beta_2))}$. Below, $\mathbb E_{\beta_1}$ is the expectation taken over $PG(1, x_i^T\beta_1)$. Applying equations~\eqref{e:pg-density} and~\eqref{e:pg-mean}, we have $E_{\beta_1}\ba{x} = \frac{\tanh(x_i^T\beta_1/2)}{2x_i^T\beta_1}$ and 
\begin{align}
    &\KL(PG(1, x_i^T\beta_1)||PG(1,x_i^T\beta_2))= \mathbb E_{\beta_1} \log \pa{\frac{e^{-\frac{(x_i^T\beta_1)^2}{2}x} \cosh(\frac{x_i^T\beta_1}{2})f(x;1,0)}{e^{-\frac{(x_i^T\beta_2)^2}{2}x} \cosh(\frac{x_i^T\beta_2}{2})f(x;1,0)}}\nonumber\\
    &= \frac{(x_i^T\beta_2)^2-(x_i^T\beta_1)^2}{2} \mathbb E_{\beta_1}\ba{x} + \log \cosh(\frac{x_i^T\beta_1}{2}) - \log \cosh(\frac{x_i^T\beta_2}{2}) \nonumber \\
    &= \frac{(x_i^T\beta_2-x_i^T\beta_1+x_i^T\beta_1)^2-(x_i^T\beta_1)^2}{4x_i^T\beta_1} \tanh(\frac{x_i^T\beta_1}{2}) + \log \cosh(\frac{x_i^T\beta_1}{2}) - \log \cosh(\frac{x_i^T\beta_2}{2}) \nonumber\\
    &= \pa{\frac{(x_i^T\beta_2-x_i^T\beta_1)^2}{4 x_i^T\beta_1} + \frac{x_i^T(\beta_2-\beta_1)}{2}}\tanh(\frac{x_i^T\beta_1}{2}) + \log \cosh(\frac{x_i^T\beta_1}{2}) - \log \cosh(\frac{x_i^T\beta_2}{2}).\label{e:logit-kl}
\end{align} By Taylor expansion, we obtain that there exists $t_i \in [0, \|\beta_1-\beta_2\|_2]$ such that 
\begin{align*}
    &\log \cosh{\frac{x_i^T\beta_2}{2}}= \\& \; \log \cosh{\frac{x_i^T\beta_1}{2}} + \frac{\tanh(\frac{x_i^T\beta_1}{2})}{2} x_i^T(\beta_2-\beta_1) + \frac{1}{8 \cosh^2(\frac{x_i^T(\beta_2+ut_i)}{2})} (\beta_1-\beta_2)^T x_i x_i^T (\beta_1-\beta_2).
\end{align*} Plugging this back into the KL divergence formula~\eqref{e:logit-kl} yields
\begin{align} \label{e:logit-kl2}
    \KL(&PG(1, x_i^T\beta_1)||PG(1,x_i^T\beta_2))\nonumber\\&=\pa{\frac{\tanh(\frac{x_i^T\beta_1}{2})}{4 x_i^T\beta_1} -\frac{1}{8\cosh^2(\frac{x_i^T(\beta_2+ut_i)}{2})}} (\beta_1-\beta_2)^T x_i x_i^T (\beta_1-\beta_2)
\end{align}
Since $\cosh(x) \ge 1 $ and $ \frac{\tanh{x}}{x} \le 1$, we have $\pa{\frac{\tanh(\frac{x_i^T\beta_1}{2})}{4 x_i^T\beta_1} -\frac{1}{8\cosh^2\pa{\frac{x_i^T(\beta_2+ut_i)}{2}}}} \le \frac{1}{8}$. This gives, $\TV(\mathcal{P}^{\Logit}_{\beta_1}, \mathcal{P}^{\Logit}_{\beta_2}) \le c\sqrt{\lambda_{\max}(X^TX)}\|\beta_1-\beta_2\|_2$. We can, therefore, conclude the same as ProbitDA.  
\end{proof}

\subsection{Proof of Theorem~\ref{t:lasso}} \label{s:lasso-proof}
Direct analysis of the LassoDA could be complicated. Instead, we consider a one-to-one transformation of the Markov chain underlying LassoDA. The transformation simplifies the problem in two ways: (1) it makes the non-log-concave target of LassoDA log-concave, and (2) it simplifies the transition kernel. 

Next, we make precise the notion of transformation of a Markov chain. For simplicity of notation, given a Markov chain with state space $\Omega$, we define a \textit{Markov chain triple} as the composite of its target distribution $\pi$, its starting distribution $\nu$, and its transition kernel $\mathcal{P}$, denoted as $(\nu, \mathcal{P}, \pi)$. For any bijective measurable function $T: \Omega \rightarrow \Omega'$, we denote the \textit{$T$-transformed Markov chain} of $\Psi$ by $\Psi_T$. If $\Psi$ is the Markov chain triple $(\nu, \mathcal{P}, \pi)$, then $\Psi_T$ is the triple ($\nu_T$,$\mathcal{P}_T$,$\pi_T$) satisfying 
\begin{align*}
    \pi_{T}=T_{\#}\pi, \quad
    \nu_T=T_{\#}\nu, \quad \text{and} \quad
    \mathcal{P}_T(x,\cdot)=T_{\#}(\delta_{T^{-1}(x)} \mathcal{P}),
\end{align*}
where $\delta_a$ is the Dirac measure centered at $a$, and $T_{\#}\pi$ is the push-forward measure of $\pi$ by $T$. We call $\pi_T$ and $\mathcal{P}_T$ the \textit{$T$-transformed target distribution} and \textit{$T$-transformed transition kernel}, respectively. To validate the analysis under a transformed Markov chain, we establish the equivalence of the mixing time under one-to-one transformation in the following lemma. 

\begin{lemma} \label{l:transform} Suppose we have a Markov chain $\Psi$ on $\Omega$ with transition kernel $\mathcal P$ and stationary distribution $\pi$, and a bijection $T:\Omega \rightarrow \Omega'$. For any error tolerance $\epsilon \in (0,1)$ and warmness $\eta \ge 1$ of the initial distributions, we have that $\pi_T = T_\#\pi$ is the stationary distribution of $\mathcal P_T$ and
$t_{\Psi}(\eta, \epsilon)=t_{\Psi_T}(\eta, \epsilon).$
\end{lemma}

See Appendix \ref{a:proof-transform} for the proof of Lemma~\ref{l:transform}.

By Lemma~\ref{l:transform}, we can study the mixing time of the LassoDA on an equivalent one-to-one transformed chain. In particular, we use the same bijective map as in Appendix A of~\cite{park2008bayesian}:   $T:\mathbb{R}^{d}\times\R^+\rightarrow \mathbb{R}^{d}\times \R^+$ that transforms $(\beta, v)$ to a new parameter space $(\varphi, \rho)$ according to 
$$
\varphi=\frac{\beta}{\sqrt{v}}, \quad \rho=\frac{1}{\sqrt{v}}.
$$

We first analyze the effects of the transformation $T$ on the target and Markov transition kernel. Then, we develop an upper bound of the mixing time for the transformed Markov chain using the standard conductance-based argument introduced in Section~\ref{s:cond}.  

\paragraph*{$T$-transformed target distribution of LassoDA} In order to simplify notation, we drop the superscripts $^{\Lasso}$ from our notation for the rest of this section. We recall from~\eqref{e:lasso-pos} that the (non-log-concave) LassoDA target is
\begin{align*}
    \pi(v, \beta|y)  
    \propto \frac{1}{v^{(n+d+2\alpha+2)/2}} e^{-\frac{1}{2v}\|\tilde{y}-X\beta\|_2^2-\lambda\frac{\|\beta\|_1}{\sqrt{v}}-\frac{\xi}{v}}.
\end{align*}
Next, we will show that the transformation by $T$ makes a log-concave target. We have that
\begin{align*} \det(\nabla T^{-1}) = \det \begin{bmatrix}
        \frac{1}{\rho} & \cdots & 0 & 0\\ 
         \vdots & \ddots &\vdots & \vdots \\
         0 &  \cdots& \frac{1}{\rho} & 0 \\
         -\frac{\varphi_1}{\rho^2} &  \cdots & -\frac{\varphi_d}{\rho^2} & -\frac{2}{\rho^3}
    \end{bmatrix} = -\frac{2}{\rho^{3+d}}.
\end{align*} 
The $T$-transformed LassoDA target is
\begin{align} \label{e:transformed-target}
    \pi_T(\varphi, \rho|y) &\propto \rho^{n+2\alpha + d+1} \exp\pa{-\frac{1}{2}\|\rho y - X\varphi\|_2^2 - \lambda \|\varphi\|_1-\rho^2 \xi} |\det(\nabla T^{-1})| \nonumber\\
    &\propto  \rho^{n+2\alpha -2} \exp\pa{-\frac{1}{2}\|\rho y - X\varphi\|_2^2 - \lambda \|\varphi\|_1-\rho^2 \xi}.
\end{align}
It is not hard to see $\pi_T$ is log-concave for $n \ge 2-2\alpha$.  

\paragraph*{$T$-transformed transition kernel of LassoDA} 
The transformation also largely simplifies the Markov transition kernel. We claim that given the special structure of $T$-transformed LassoDA's kernel, it suffices to study the $\varphi$-marginal chain of the $T$-transformed LassoDA.

The $T$-transformed LassoDA's kernel $\mathcal{P}_T$ is illustrated below: 
\begin{align} \label{e:transform-kernel}
    (\varphi^{(m-1)}, \rho^{(m-1)}) \xrightarrow{T^{-1}} \underbrace{(\beta^{(m-1)}, v^{(m-1)}) \rightarrow z^{(m)} \rightarrow (\beta^{(m)}, v^{(m)})}_{\text{The original kernel of LassoDA}} \xrightarrow{T} (\varphi^{(m)}, \rho^{(m)}).
\end{align}
We first note that in \eqref{e:transform-kernel}, $z^{(m)}$ is sufficient for $\varphi^{(m)}$ and $\rho^{(m)}$. Furthermore, one can show that  $z^{(m)}$ depends only on $\varphi^{(m-1)}$, and is independent of $\rho^{(m-1)}$, because 
\begin{align}
z^{(m)} = \IG\pa{\sqrt{\frac{\lambda^2v^{(m-1)}}{(\beta^{(m-1)})^2}}, \lambda^2} = \IG\pa{\sqrt{\frac{\lambda^2}{(\varphi^{(m-1)})^2}}, \lambda^2}.
\end{align}
These altogether imply that the $\varphi$-sample is sufficient to generate next-step $\rho$ and $\varphi$ on the $T$-transformed LassoDA. The transformed kernel is illustrated in Figure \ref{fig:transform-kernel}. The structure has the following important implications.

First, the independence of $z^{(m)}$ on $\rho^{(m-1)}$ ensure that the $\varphi$-marginal chain is well-defined. Specifically, we use $\pa{\nu_{T_\varphi}, \mathcal{P}_{T_\varphi}, \pi_{T_\varphi}}$ to denote the Markov chain triple of the $\varphi$-marginal chain of the $T$-transformed LassoDA $\Psi_{T_\varphi}$.

Second, the sufficiency of $\varphi$ for the next-step $\rho$ enables us to control the mixing time of the $\Psi_T$ by that of $\Psi_{T_\varphi}$. To demonstrate the sufficiency of the $\varphi$-marginal chain, we consider another Markov chain that evolves according to the same kernel as in equation~\eqref{e:transform-kernel}, but starts from the stationary distribution $\pi_T$. 
Then the chain will remain at the distribution $\pi_T$. We use a subscript $\pi$ to indicate the samples are from this stationary chain. 

Using $\mathcal{P}_{T_{\varphi \rightarrow \rho}}$ to denote the transition kernel from $\varphi^{(m-1)}$ to $\rho^{(m)}$, we have that,
\begin{align*}   \TV\pa{\nu_T\mathcal{P}_T^m, \pi_T} &= \TV\pa{\pa{\varphi^{(m)}, \rho^{(m)}}, \pa{\varphi^{(m)}_\pi, \rho^{(m)}_\pi}} \\&\overset{(i)}{\le} \TV\pa{\varphi^{(m-1)}, \varphi_\pi^{(m-1)}} = \TV\pa{\nu_{T_\varphi}\mathcal{P}_{T_\varphi}^{m-1}, \pi_{T_\varphi}}
\end{align*}
where (i) is due to data processing equality. Overall, we have 
\begin{align} \label{e:suffice}
 \TV \pa{\nu \mathcal P^m, \pi} = \TV \pa{\nu_T \mathcal P^m_T, \pi_T} \le \TV \pa{\nu_{T_\varphi} \mathcal P_{T_\varphi}^{m-1}, \pi_{T_\varphi}}.
\end{align}
Equation \eqref{e:suffice} gives us a way to control the mixing time of the LassoDA by that of $\varphi$-marginal of its $T$-transformed chain.
Therefore, studying the mixing times of $\Psi_{T_\varphi}$ is sufficient. 

\begin{figure}[t]
    \centering
\tikzstyle{para} = [circle, node distance=3em, minimum height=3 em, minimum width=3em]
\tikzstyle{empty} = [circle, node distance=6em, minimum height=3 em, minimum width=3em]
\tikzstyle{latent} = [circle, fill=gray!40, node distance=3em, minimum height=3em, minimum width=3em]
\tikzstyle{block} = [rectangle, draw, fill=gray!20, text width=3.5em, text centered, minimum height=4em]
\tikzstyle{line} = [draw, -latex']
\begin{tikzpicture}[node distance = 4.5em, auto]
\node [block, right of=startmid] (latent) {$ z_1^{(m+1)}$ \\ $\, \vdots$ \\ $\, \vdots$ \\$z_d^{(m+1)}$};
\node [empty, left of=latent] (startmid) {};
\node [para, above of=startmid] (start) {$\rho^{(m)}$};
\node [para, below of=startmid] (start2) {$\varphi^{(m)}$};

\node [empty, right of=latent] (endmid) {};
\node [para, above of=endmid] (end) {$v^{(m+1)}$};
\node [para, below of=endmid] (end2) {$\beta^{(m+1)} $};
\node [empty, right of=endmid] (endendmid) {};
\node [para, above of=endendmid] (endend) {$\rho^{(m+1)}$};
\node [para, below of=endendmid] (endend2) {$\varphi^{(m+1)} $};
\path [line,dashed] (start2) -- (latent);
\path [line] (latent) -- (end);
\path [line] (end) -- (end2);
\path [line] (latent) -- (end2);
\path [line] (end2) -- (endend2);
\path [line] (end2) -- (endend);
\path [line] (end) -- (endend);

\end{tikzpicture} 
\caption{Illustration of the kernel of $T$-transformed LassoDA}
\label{fig:transform-kernel}
\end{figure}

\paragraph*{Mixing time of the $T$-transformed chain}  We perform the analysis using the standard conductance-based method in Section~\ref{s:cond}. For clarity, we extract the two main parts of the proof as lemmas below, and defer their proofs.

\begin{lemma}\label{l:lasso-iso}(Isoperimetry of $\pi_{T_\varphi}$) The Cheeger constant of the $\varphi$-marginal of the $T$-transformed LassoDA's target satisfies 
$$   \Ch(\pi_{T_\varphi}) = c(d \log d  + n \log n),$$ 
where $c$ is a constant depending on $M$, $\lambda$, and $\xi$.
\end{lemma}

We use Lemma~\ref{l:transfer} with $\mu_1 \propto e^{-\lambda \|\varphi\|_1} \, \mu_2=\pi_{T_\varphi}$ and Lemma~\ref{l:known} (1) to prove Lemma~\ref{l:lasso-iso}. The proof is deferred to Appendix \ref{s:proof-lasso-iso}. Although Lemma~\ref{l:lasso-iso} gives a polynomial bound in $n$ and $d$ for $\Ch(\pi_{T_\varphi})$, we suspect the bound is not tight. The distribution $\pi_{T_\phi}$ can be viewed as a log-concave perturbation of the double-exponential measure, i.e., $\pi_{T_\phi} \propto e^{-\lambda |\varphi|_1 - V(\varphi)}$ with convex $V(\varphi)$. Intuitively, such perturbations may enhance log-concavity, suggesting $\Ch(\pi_{T_\varphi})$ remains close to that of the double-exponential measure, which is constant in $n$ and $d$ by Lemma~\ref{l:known}(1). Indeed, \cite{barthe2019spectral} shows only a logarithmic deterioration when $V$ is even (see also \cite{cattiaux2022functional}). Although this does not yet cover the Lasso case, we hope further progress in this direction will yield sharper bounds for $\Ch(\pi_{T_\phi})$.

\begin{lemma} \label{l:lasso-overlap} (One-step overlap of $\Psi_{T_\varphi}$) The transition kernel of $\varphi$-marginal of the $T$-transformed LassoDA satisfies
$$
\TV\pa{\pa{\mathcal{P}_{T_\varphi}}_x, \pa{\mathcal{P}_{T_\varphi}}_y} \le \frac{1}{2} \text{ whenever } x,y \in \mathbb{R}^d \text{ and } \|x-y\|_2 \le \frac{c}{d},
$$
where $c$ is a universal constant. 
\end{lemma}
See Section~\ref{s:proof-lasso-overlap} for the proof of Lemma~\ref{l:lasso-overlap}. 

Using Lemma~\ref{l:lasso-iso}, Lemma~\ref{l:lasso-overlap}, and Lemma~\ref{l:main-cond}, we can obtain a lower bound on the conductance of the $\Psi_{T_\varphi}$ such that 
$
\Phi \ge c \frac{1}{d(d\log d+n\log n)}.
$ Then, Lemma~\ref{l:mtub} and \eqref{e:suffice} implies that with a $\eta$-warm start, we have
$
\TV(\nu \mathcal{P}^k, \pi) \le \frac{1}{2}\sqrt{\eta} e^{-c\frac{k-1}{d^2(d\log d +n\log n)^2}}.
$ To guarantee that $\TV(\nu \mathcal{P}^k, \pi)$ is within $\epsilon$, it suffice to ensure that $\sqrt{\eta} e^{-c\frac{k-1}{d^2(d\log d +n\log n)^2}} \le \epsilon$ or $k \ge 1+c d^2 (d\log d + n\log n)^2\log \pa{\frac{\sqrt{\eta}}{\epsilon}}$. Therefore, the mixing time of the LassoDA satisfies  
$
t_{\Psi^{\Lasso}}(\eta, \epsilon) \le c d^2(d \log d + n\log n)^2 \log \pa{\frac{\eta}{\epsilon}}.
$
Theorem~\ref{t:lasso} follows.

\subsubsection{Proof of Lemma~\ref{l:lasso-overlap}} \label{s:proof-lasso-overlap}
When studying the one-step overlap condition for ProbitDA and LogitDA, we upper bound the TV distance of the latent variables by the KL divergence for ease of calculation. This is not possible for the LassoDA at some extreme parameter values, as the KL divergence of the auxiliary inverse Gaussian random variables diverges. We use the following lemma to deal with the extreme cases. Intuitively, the lemma characterizes the limiting behavior of the IG variable with a growing mean and a fixed shape parameter. 

\begin{lemma} \label{l:tv_ig} Suppose $\mu_1, \mu_2, \lambda>0$. Then,
$
\TV(\IG(\mu_1, \lambda), \IG(\mu_2, \lambda)) \le \sqrt{\frac{4\lambda}{\pi \min\{\mu_1,\mu_2\}}}.
$
\end{lemma}
The proof of Lemma~\ref{l:tv_ig} can be found in Appendix \ref{s:ig-proof}.  

\begin{proof}[Proof of Lemma~\ref{l:lasso-overlap}]  For simplicity, we use $\mathcal{P}^\prime$ to denote $\mathcal{P}_{T_\varphi}$.
For any $\varphi_1,\varphi_2 \in \mathbb{R}^d$, let $z_i$ be the latent IG variables chosen for $\varphi_i$, $i=1,2$. By data processing inequality, we have 
\begin{align*} 
    \TV(\mathcal{P}_{\varphi_1}^{\prime}, \mathcal{P}_{\varphi_2}^{\prime}) &\le \TV(z_1, z_2)  \nonumber =  \TV\left(\left\{\IG\left(\frac{\lambda}{|\varphi_{1j}|}, \lambda^2\right)\right\}_{j=1}^d, \left\{\IG\left(\frac{\lambda}{|\varphi_{2j}|}, \lambda^2\right)\right\}_{j=1}^d\right).
\end{align*}
The TV distance of IG variables does not have a closed form. We can upper bound it by the KL divergence using Pinsker's inequality, as in the analysis of ProbitDA and LogitDA. We begin by showing that this is feasible only when either $|\varphi_{1j}|$ or $|\varphi_{2j}|$ is large. Below, $\mathbb{\mathop{E}}_{\varphi_{1j}}$ denotes the expectation taken over $\IG(\frac{\lambda}{|\varphi_{1j}|}, \lambda^2)$. 
Let $\mu_{ij} = \frac{\lambda}{|\varphi_{ij}|}$ for $1,2$. Using the fact $\mathbb {\mathop{E}}_{\varphi_{1j}} x = \frac{\lambda}{|\varphi_{1j}|}$, we have that \begin{align*}
& \KL\pa{\IG\left(\frac{\lambda}{|\varphi_{1j}|}, \lambda^2\right) \Big|\Big| \IG\left(\frac{\lambda}{|\varphi_{2j}|}, \lambda^2\right)} = \mathbb{\mathop{E}}_{\varphi_{1j}} \log \pa{\frac{\sqrt{\frac{\lambda^2}{2\pi x^3}} \exp\left\{-\frac{\lambda^2(x-\mu_{1j})^2}{2\mu_{1j}^2 x}\right\}}{\sqrt{\frac{\lambda^2}{2\pi x^3}} \exp\left\{-\frac{\lambda^2(x-\mu_{2j})^2}{2\mu_{2j}^2 x}\right\}}} \\
    &=\lambda^2 \ba{ \left(\frac{1}{2\mu^2_{2j}}-\frac{1}{2\mu^2_{1j}}\right)\mathbb {\mathop{E}}_{\varphi_{1j}} x +\left(\frac{1}{\mu_{1j}}-\frac{1}{\mu_{2j}}\right) } 
    =\lambda \ba{ \left(\frac{\varphi^2_{2j}}{2}-\frac{\varphi^2_{1j}}{2}\right)\frac{1}{|\varphi_{1j}|} +\left(|\varphi_{1j}|-|\varphi_{2j}|\right)} 
\end{align*}
One can see that we cannot use the KL divergence to perform the analysis when both $|\varphi_{1j}|$ and $|\varphi_{2j}|$ are small, as KL divergence diverges in this case. (Either $|\varphi_{1j}|$ or $|\varphi_{2j}|$ being small is sufficient because we can bound TV distance by KL divergence in either direction.) We separate this extreme case and deal with it using the bound in Lemma~\ref{l:tv_ig}. Let $m_j=\max\{|\varphi_{1j}|, |\varphi_{2j}|\}$ for $j=1,\ldots, d$. WLOG, we assume that for some $1\le k \le d$, $m_j \le \frac{C}{d^2}$ for $j=1,\ldots, k$ and $ m_j \ge \frac{C}{d^2}$ for $j=k+1,\ldots, d$, where $C=\frac{\pi}{64 \lambda}$. Then, by the independence of IG variables and Pinsker's inequality, we have 
\small{\begin{align} 
    \nonumber &\TV(\mathcal{P}_{\varphi_1}^\prime, \mathcal{P}_{\varphi_2}^\prime)  \le \TV\pa{\bc{\IG \pa{\frac{\lambda}{|\varphi_{1j}|}, \lambda^2}}_{j=1}^k, \bc{\IG \pa{\frac{\lambda}{|\varphi_{2j}|}, \lambda^2}}_{j=1}^k} \\& \quad \quad\quad \quad \quad\quad \quad+ \TV \pa{ \bc{\IG \pa{\frac{\lambda}{|\varphi_{1j}|}, \lambda^2}}_{j=k+1}^d, \bc{\IG \pa{\frac{\lambda}{|\varphi_{2j}|}, \lambda^2}}_{j=k+1}^d}\nonumber\\
    &\label{e:tv-sum}  \le \sum_{j=1}^k \underbrace{\TV \pa{\IG \pa{\frac{\lambda}{|\varphi_{1j}|}, \lambda^2}, \IG \pa{\frac{\lambda}{|\varphi_{2j}|}, \lambda^2}}}_{\text{The extreme case:  Using Lemma~\ref{l:tv_ig} to bound}} + \underbrace{\sqrt{\frac{1}{2}\sum_{j=k+1}^d \KL \pa{\IG \pa{\frac{\lambda}{|\varphi_{1j}|}, \lambda^2}, \IG\pa{\frac{\lambda}{|\varphi_{2j}|}, \lambda^2}}}}_{\text{The regular case: Using $\|\varphi_{1j}-\varphi_{2j}\|_2$ to bound}}
\end{align}}
\paragraph*{The extreme case}  For $j\le k$, we have that $\max\{|\varphi_{1j}|, |\varphi_{2j}|\} \le \frac{C}{d^2}$. By Lemma~\ref{l:tv_ig}, 
\begin{align} \label{e:lasso-tv}
    \TV \pa{\IG \pa{\frac{\lambda}{|\varphi_{1j}|}, \lambda^2}, \IG\pa{\frac{\lambda}{|\varphi_{2j}|}, \lambda^2}} \le \sqrt{\frac{4\lambda^2}{\pi 
\frac{\lambda}{\max\{|\varphi_{1j}|, |\varphi_{2j}|\}}}} \le \frac{1}{4d}.
\end{align} 

\paragraph*{The regular case} For $j \ge k+1$, WLOG, we assume that $|\varphi_{1j}| \ge |\varphi_{2j}| $, then 
$|\varphi_{1j}| \ge \frac{C}{d^2}$. To control $\KL \pa{\IG \pa{\frac{\lambda}{|\varphi_{1j}|}, \lambda^2} \Big|\Big| \IG \pa{\frac{\lambda}{|\varphi_{2j}|}, \lambda^2}}$ using $|\varphi_{1j}-\varphi_{2j}|$, expand
$
\frac{\varphi^2_{2j}}{2} = \frac{\varphi^2_{1j}}{2} + \varphi_{1j}(\varphi_{2j}-\varphi_{1j}) +\frac{1}{2}(\varphi_{2j}-\varphi_{1j})^2 .
$
Therefore, we have
\begin{align} 
    &\KL \pa{\IG \pa{\frac{\lambda}{|\varphi_{1j}|}, \lambda^2} \Big|\Big| \IG \pa{\frac{\lambda}{|\varphi_{2j}|}, \lambda^2}}  \nonumber\\ &= \lambda \ba{ \frac{1}{2} \frac{1}{|\varphi_{1j}|} (\varphi_{2j}-\varphi_{1j})^2 + \underbrace{\sign(\varphi_{1j})(\varphi_{2j}-\varphi_{1j}) + |\varphi_{1j}|-|\varphi_{2j}|}_{\le 0}} \le \frac{\lambda}{2C} d^2 (\varphi_{2j}-\varphi_{1j})^2. \label{e:lasso-KL}
\end{align}  
Using inequalities~\eqref{e:lasso-tv} and~\eqref{e:lasso-KL} in equation~\eqref{e:tv-sum}, we have 
\begin{align*}
    \TV(\mathcal{P}_{\varphi_1}^{\prime}, \mathcal{P}_{\varphi_2}^{\prime}) \le \frac{k}{4d} + cd \sqrt{\sum_{j=k+1}^d (\varphi_{2j}-\varphi_{1j})^2} \le \frac{1}{4} + cd \| \varphi_1-\varphi_2\|_2.
\end{align*} 
If we choose $\Delta=\frac{1}{4cd}$, $\|\varphi_1-\varphi_2\|_2 \le \Delta$ guarantees that $\TV(\mathcal{P}_{\varphi_1}^{\prime}, \mathcal{P}_{\varphi_2}^{\prime})  \le \frac{1}{2} $. 
\end{proof}

\section{Conclusion and discussion} \label{s:conclude} We adapt the conductance-based method, rooted in a long line of work on mixing times via convex geometry and isoperimetric inequalities, to the structure of DA chains. By using this method, we establish the first fast mixing guarantees for three important DA algorithms (i.e. ProbitDA, LogitDA, and LassoDA). This addresses the non-asymptotic aspect of the long-standing ``convergence complexity" problem \cite{rajaratnam2015mcmc} for the three DA algorithms. 

To conclude, we list a few directions that merit further investigation:

\paragraph*{Lower bounds} Obtaining mixing-time lower bounds for the three DA algorithms remains an interesting open problem, as such bounds would clarify the tightness of our results and enable a fuller comparison with alternative sampling methods.

\paragraph*{Isoperimetric constant and dependency on warmness for LassoDA} In contrast to ProbitDA and LogitDA, analyzing the isoperimetric constant and improving the dependence on the  warmness parameter for LassoDA is substantially more challenging. This is partially because many important underlying techniques that support the analysis for strongly log-concave distributions are not readily carried over to weakly log-concave settings. Specifically, although we have good control of the Cheeger constant over log-concave perturbations of strongly log-concave measures (Lemma~\ref{l:known} (2)), as noted below Lemma~\ref{l:lasso-iso}, comparable results are lacking for perturbations of the double-exponential distribution. Progress in this area could yield sharper bounds on the Cheeger constant of the marginal transformed Lasso target in Lemma~\ref{l:lasso-iso}. Moreover, one can make the dependence on the warmness parameter milder (e.g., double logarithmic) and hence allow good convergence from cold starts, if more results on log-isoperimetric inequalities for weakly log-concave distributions are available. 

Despite these obstacles, we believe our guarantees provide useful insights for empirical studies using DA algorithms. Moreover, we expect them to offer valuable perspectives to general MCMC theory and encourage more research on studying statistically oriented sampling problems under isoperiemetric assumptions. 

\begin{acks} We would like to thank Sam Power 
for enlightening discussions on the conductance method, and for suggesting Lemmas~\ref{l:mtub} and~\ref{l:mixingtime-upperbound-improved}.
\end{acks}
\newpage

\bibliographystyle{imsart-nameyear} 
\bibliography{bib} 




\newpage
\appendix
\section{Mixing time with a feasible start} \label{s:main-cold}
In this appendix, we prove mixing time guarantees for the three DA algorithms starting from known and implementable distributions. 

\subsection{Feasible starts for ProbitDA and LogitDA} From Equations \eqref{e:probit-log-gradient} and \eqref{e:logit-log-gradient}, we can obtain that the posteriors $\pi \; \propto \; e^{-f} $ are strongly log-concave and satisfy  
\begin{align}
    \lambda_{\min}(\nabla^2 f^{\Probit}) &\ge \lambda_{\min}(B^{-1})   \coloneq m^{\prime\Probit} \label{e:probit-convex}\\
    \lambda_{\min}(\nabla^2 f^{\Logit}) &\ge \lambda_{\min}(B^{-1})  \coloneq m^{\prime\Logit} \label{e:logit-convex}\\
        \lambda_{\max}(\nabla^2 f^{\Probit}) &\le \lambda_{\max}(X^TX) + \lambda_{\max}(B^{-1}) \coloneq L^{\prime\Probit}\label{e:probit-smooth}\\
    \lambda_{\max}(\nabla^2 f^{\Logit}) &\le 0.25 \lambda_{\max}(X^TX) + \lambda_{\max}(B^{-1}) \coloneq  L^{\prime\Logit}. \label{e:logit-smooth}
\end{align}
Utilizing the strong log-concavity of ProbitDA and LogitDA target distributions, we adopt the following feasible starting distribution for general strongly log-concave targets $\pi$ in $\mathbb{R}^d$ proposed by \cite{dwivedi2019log}, 
$$
\nu_{\star} = \mathcal{N}\pa{x_\star, \frac{1}{L^\prime}\mathbb{I}_d}
$$
where $x_\star$ is the mode of $\pi$. Following the steps in Section 3.2 of \cite{dwivedi2019log}, one can demonstrate that 
\begin{align} \label{e:feasible-eta}
    \sup_A \frac{\nu_{\star}(A)}{\pi(A)} \le \pa{\frac{L^{\prime}}{m^{\prime}}}^{\frac{d}{2}} = \eta_\star ,
\end{align}
where the supremum is taken over all measurable sets $A \subseteq \mathbb{R}^d$. Using the $m^{\prime}$ and $L^{\prime}$ defined in Equations~\eqref{e:probit-convex}, \eqref{e:logit-convex}, \eqref{e:probit-smooth},  and \eqref{e:logit-smooth}, we can obtain
\begin{align}\label{e:feasible-eta-specific}
    \eta_\star^{\Probit} = \pa{\frac{\|X^TX\| + \|B^{-1}\|}{\lambda_{\min}(B^{-1})}}^{\frac{d}{2}},   \quad \quad \eta_\star^{\Logit} = \pa{\frac{0.25\|X^TX\| + \|B^{-1}\|}{\lambda_{\min}(B^{-1})}}^{\frac{d}{2}} .
\end{align} 
We will consider the same setting as Corollary \ref{t:main-cor}, and assume additionally that $\lambda_{\min}(B) = \Omega(1)$. In this scenario, $\|X^TX\|_{\mathrm{op}}^{d/2}$ dominant the complexity of $\eta_{\star}$.  
We follow the proof of Corollary \ref{t:main-cor} to get high probability bounds for $\|X^TX\|_{\mathrm{op}}^{d/2}$ and substitute them into Corollary \ref{t:main-cor}. This gives the following corollary.

\begin{cor}\label{t:feasible-all}
Consider the same setting as Corollary \ref{t:main-cor}. We assume additionally that $\lambda_{\min}(B)=\Omega(1)$. We have that for any error tolerance $\epsilon \in (0,1)$, the mixing time in metric $\textup{d} \in \{\TV, \KL, \chi^2\}$ of ProbitDA starting from $\mathcal{N}\pa{x^{\Probit}_\star, \frac{\mathbb{I}_d}{\|X^TX\| + \|B^{-1}\|}}$ or LogitDA starting from $\mathcal{N}\pa{x^{\Logit}_\star, \frac{\mathbb{I}_d}{0.25\|X^TX\| + \|B^{-1}\|}}$ satisfies the following.
\begin{enumerate}
\item (Sub-Gaussianity) If $\mathcal{L}$ is sub-Gaussian with sub-Gaussian norm $K$, with probability at least $1-2e^{-u}$,
$$t^{\textup{d}}_{\Psi}(\eta, \epsilon) \le c A \log \pa{ \frac{d\log A }{\epsilon}},$$
where $A=\bc{n + \frac{\|\Sigma\|_{\mathrm{op}}}{d} \ba{n +  c^\prime n K^2 \pa{\sqrt{\frac{d+u}{n}} + \frac{d+u}{n}}}}$.
\item (Log-concavity) If $\mathcal{L}$ is log-concave, with probability at least $1-\exp(-c^{\prime\prime}\sqrt{d})$, 
$$
t^{\textup{d}}_{\Psi}(\eta, \epsilon) \le c B \log \pa{\frac{d\log B}{\epsilon}}, 
$$ 
where $B=\bc{n + \frac{\|\Sigma\|_{\mathrm{op}}}{d} \ba{n +c^\prime n \pa{\sqrt{\frac{d}{n}}+\frac{d}{n}}}}$.
\end{enumerate}
Here, $c, c^\prime, c^{\prime\prime}$ are universal constants.
where $c$ is a universal constant.
\end{cor}

We observe that if we consider $K$, $u$, and $\|\Sigma\|_{\mathrm{op}}$ to be independently of $n$ and $d$, we can get that $A=\mathcal{O}(n)$ and $B=\mathcal{O}(n)$. Therefore, either under sub-gaussian or log-concave assumptions, we can get a $\mathcal{O}\pa{n\log\pa{\frac{d\log n}{\epsilon}}}$ mixing time guarantee for both ProbitDA and LogitDA, with high probability over data.

\begin{rem*} $v_\star$ is a valid feasible start only if we can efficiently compute $x_\star$. \cite{dwivedi2019log} comments that a $\delta$-approximation of $x_\star$ can be obtained in $\mathcal{O}(\kappa \log\frac{1}{\delta})$ steps using standard optimization algorithms such as gradient descent, and discusses how an inexact $x_\star$ affects the mixing time. We refer interested readers to \cite[Section 3.2]{dwivedi2019log} for a detailed discussion. 
In the cases of ProbitDA and LogitDA, $\kappa \le \frac{L^{\prime}}{m^{\prime}} = \mathcal{O}(n)$ under the setting of Corollary~\ref{t:feasible-all} with $K$, $u$, and $\|\Sigma\|_{\mathrm{op}}$ being constant in $n$ and $d$. The computational complexity of optimization does not exceed that of sampling in Corollary~\ref{t:feasible-all}, and thus is ignorable. 
\end{rem*}

\subsection{A feasible start for LassoDA}
One analyzable feasible start for LassoDA is the following:
\begin{align}\label{e:lasso-feasible}
    \nu_\dagger(\beta, v|y) \propto \frac{1}{v^{\frac{n+d+2\alpha+1}{2}}} \exp\bc{-\frac{1}{2v}\|y-X\beta\|^2_2-\lambda \frac{\|\beta\|_2^2}{v}-\frac{\xi}{v}}.
\end{align}
Despite the complicated form, one can directly sample from $\nu_\dagger$ by noticing that $\nu_\dagger$ is a push-forward measure of the following $\nu_\dagger^\prime$ by the map $T^{-1}: (\varphi,\rho)  \mapsto  (\beta,v)$ such that $\beta=\varphi \sqrt{v}$ and $v=\frac{1}{\rho^2}$: 
\begin{align*}
 \nu_\dagger^\prime(\varphi,\rho|y) \propto \rho^{n+2\alpha-2} \exp\bc{-\frac{1}{2}\|\rho y - X \varphi\|_2^2- \lambda\|\varphi\|_2^2 -\rho^2\xi},
\end{align*}
and that under $\nu_\dagger^\prime$,
\begin{align*}
    \rho^2|y&\sim \Gam\pa{\frac{n+2\alpha-1}{2}, \xi+\frac{1}{2}y^T(\mathbb{I}_n-X(X^TX+2\lambda \mathbb{I}_d)^{-1}X^T)y} \\
    \varphi|\rho,y &\sim \mathcal{N}(\rho(X^TX+2 \lambda \mathbb{I}_d)^{-1}X^Ty, (X^TX+2\lambda \mathbb{I}_d)^{-1}).
\end{align*}
These altogether show a way to obtain samples from $\nu_\dagger(\beta, v|y)$, which we illustrate in Algorithm \ref{a:lasso-feasible}. 

\begin{algorithm}[t]
\caption{A Feasible Start for LassoDA}
\begin{algorithmic}[1]
\INPUT $X \in \mathbb{R}^{n \times d}, y \in \mathbb{R}^n, \lambda \in \mathbb{R}^{+}, \alpha \in \mathbb{R}^{+}, \xi \in \mathbb{R}^{+}$
\State Let $\tilde{y}=y-\bar{y} \mathbf{1}_n$.
\State Draw $\gamma^{(0)} \sim \Gam(\frac{n+2\alpha-1}{2}, \xi+\frac{1}{2}\tilde{y}^T(\mathbb{I}_n-X(X^TX+2\lambda \mathbb{I}_d)^{-1})X^T)\tilde{y})$.
\State Let $\rho^{(0)} = \sqrt{\gamma^{(0)}}$.
\State Draw $\varphi^{(0)} \sim \mathcal{N}(\rho^{(0)}(X^TX+2 \lambda \mathbb{I}_d)^{-1}X^Ty, (X^TX+2\lambda \mathbb{I}_d)^{-1})$.
\State Let $v^{(0)}=\frac{1}{(\rho^{(0)})^2}$.
\State Let $\beta^{(0)}=\varphi^{(0)}\sqrt{v^{(0)}}$.
\OUTPUT{$\beta^{(0)}, v^{(0)}$}
\end{algorithmic}
\label{a:lasso-feasible}
\end{algorithm}

The next lemma measures the distance between $\nu_\dagger(\beta, v|y)$ and the target of LassoDA. One can get an upper bound on mixing time starting from the feasible start~\eqref{e:lasso-feasible} by plugging in the estimate of $\eta$ in Lemma~\ref{l:lasso-feasible} to Theorem 3.3, as we will state in Corollary~\ref{t:feasible-lasso}.
\begin{lemma} \label{l:lasso-feasible} Suppose $n \ge 2-2\alpha$. We assume that $\norm{X}_{\mathrm{op}}=\Poly(nd)$ and $\norm{y}_2=\Poly(n)$. With a proper variance prior (i.e. $\xi > 0$), we have that $$\sup_A \frac{\nu_\dagger(A)}{\pi^{\Lasso}(A)} \le e^{c(d\log d + n\log n)},$$ where the supremum is taken over all the measurable sets $A \subseteq \mathbb{R}^d$, and $c$ is a constant depending on $M$ and $\xi$.
\end{lemma}
The proof of Lemma~\ref{l:lasso-feasible} is deferred to Section~\ref{a:lasso-feasible-proof}. 
\begin{cor}\label{t:feasible-lasso}
Suppose $n \ge 2-2\alpha$. Assuming that $\norm{X}_{\mathrm{op}}=\Poly(nd)$ and $\norm{y}_2=\Poly(n)$, we have for any error tolerance $\epsilon \in (0,1)$, the mixing time of LassoDA starting from $\nu_\dagger$ satisfies
$$
t_{\Psi^{\Lasso}}(\eta, \epsilon) \le c \pa{d^2 (d\log d+n \log n)^2 \pa{ d\log d+n \log n + \log \pa{\frac{1}{\epsilon}}}},
$$
where $c$ is a constant depending on $\xi$ and $M$.
\end{cor}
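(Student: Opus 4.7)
The proof proposal is essentially a one-line substitution of Lemma~\ref{l:lasso-feasible} into Theorem~\ref{t:lasso}, so the plan is short. The starting distribution $\nu_\dagger$ is not a constant-warm start, but Lemma~\ref{l:lasso-feasible} controls its warmness by $\eta_\dagger := \sup_A \frac{\nu_\dagger(A)}{\pi^{\Lasso}(A)} \le e^{c_1(d\log d + n\log n)}$ for some $c_1$ depending on $M$ and $\xi$. Since Theorem~\ref{t:lasso} applies for \emph{any} $\eta \ge 1$ and provides a bound of order $d^2(d\log d + n\log n)^2 \log(\eta/\epsilon)$, the strategy is simply to verify the hypotheses of that theorem at $\eta = \eta_\dagger$ and then read off the logarithm.

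Concretely, I would first note that Algorithm~\ref{a:lasso-feasible} yields a genuine sample from $\nu_\dagger$ by the change-of-variables argument already given in the main text (the map $T: (\beta,v)\mapsto (\beta/\sqrt v,\,1/\sqrt v)$ sends $\nu_\dagger$ to $\nu_\dagger^\prime$, and $\nu_\dagger^\prime$ decomposes into a Gamma marginal for $\rho^2$ and a Gaussian conditional for $\varphi$, both implementable in closed form). This ensures $\nu_\dagger$ is a valid feasible initialization. Next, invoke Lemma~\ref{l:lasso-feasible} to obtain $\eta_\dagger \le e^{c_1(d\log d + n\log n)}$ under the hypotheses $n\ge 2-2\alpha$, $\xi>0$, and Assumption~\ref{a: bounded-entries}.

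Finally, I would apply Theorem~\ref{t:lasso} with this $\eta_\dagger$:
\begin{align*}
t_{\Psi^{\Lasso}}(\nu_\dagger,\epsilon)
&\le c_2\, d^2(d\log d + n\log n)^2 \log\!\pa{\frac{\eta_\dagger}{\epsilon}} \\
&\le c_2\, d^2(d\log d + n\log n)^2 \ba{c_1(d\log d + n\log n) + \log\!\pa{\frac{1}{\epsilon}}} \\
&\le c_3\, d^2(d\log d + n\log n)^2 \ba{d\log d + n\log n + \log\!\pa{\frac{1}{\epsilon}}},
\end{align*}
absorbing $c_1,c_2$ into a single constant $c_3$ depending on $M$ and $\xi$. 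This matches the claimed bound.

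Since the substitution itself is immediate, the only real obstacle is conceptual rather than technical: one must confirm that the $\eta$-warm mixing guarantee of Theorem~\ref{t:lasso} is uniform in $\eta$ (which it is, as the dependence on $\eta$ enters only through the logarithm), so that plugging in an $\eta_\dagger$ that grows polynomially in $e^{\text{poly}(n,d)}$ still yields a polynomial mixing bound. No additional isoperimetric or conductance argument is needed beyond what Theorem~\ref{t:lasso} and Lemma~\ref{l:lasso-feasible} already provide.
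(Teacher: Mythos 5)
Your proposal is correct and follows exactly the route the paper intends: the paper states (just before Lemma~\ref{l:lasso-feasible}) that one obtains Corollary~\ref{t:feasible-lasso} by plugging Lemma~\ref{l:lasso-feasible}'s warmness bound $\eta_\dagger \le e^{c(d\log d + n\log n)}$ into Theorem~\ref{t:lasso} and expanding the logarithm, which is precisely your calculation. Your added observation that $\nu_\dagger$ is implementable via the push-forward decomposition and that the logarithmic dependence on $\eta$ in Theorem~\ref{t:lasso} is what makes an exponential-warm start acceptable correctly identifies the only conceptual point worth checking.
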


\section{Comparison to best known guarantees of alternatives} \label{s:comparison}

Apart from the DA algorithms, one can alternatively sample from the target distributions of the three DA algorithms using generic sampling algorithms, such as Metropolis-Hastings and gradient-based algorithms. It is a common problem in practice to decide which algorithm to choose. Certainly, without user-tuned parameters, the DA algorithms are the easiest to implement, as the Metropolis-Hastings and gradient-based algorithms usually require user-set proposal distribution or step size. Aside from the apparent advantage of convenience, it is important to compare the DA algorithms and the alternatives in terms of computational complexity. Furthermore, if the DA algorithms are slower, it is useful to specify how much the trade-off is for implementation convenience. One way that theoretical complexity analysis benefits empirical studies is by making quantitative and potentially comprehensive comparisons between alternative algorithms. 
We carry this out for the mixing time of the DA algorithm. 

We choose Langevin Monte Carlo (LMC, see Algorithm~\ref{a:lmc}) and Metropolis Adjusted Langevin Algorithm (MALA, see Algorithm~\ref{a:mala}) as representative examples of alternative sampling algorithms. The choice is based on a general classification of sampling algorithms as low-accuracy samplers or high-accuracy samplers. \textit{Low-accuracy samplers} refer to sampling algorithms obtained by discretization of stochastic processes, where the discretization introduces bias for the stationary distribution. Examples of low-accuracy samplers include Langevin Monte Carlo and Hamiltonian Monte Carlo. On the other hand, \textit{high-accuracy samplers} refer to sampling algorithms that have an unbiased stationary distribution, such as Gibbs samplers and Metropolis-Hasting algorithms. The DA algorithms are high-accuracy samplers. Considering the simplicity of theoretical results, we employ LMC as an example of an alternative low-accuracy sampler and MALA as an example of an alternative high-accuracy sampler. 

The comparison will be done on both mixing time and cost per iteration, presented in Section~\ref{s:mixing-time-compare} and Section~\ref{s:cost-per-iteration-compare}, respectively. 

\subsection{Mixing Time} \label{s:mixing-time-compare}
We begin by noting that a complete comparison of mixing times is not yet possible. Part of the challenge comes from the fact that a conclusive comparison relies on lower bound analysis, which is underdeveloped for DA algorithms and alternative algorithms. Specifically, to demonstrate that Algorithm A is faster than Algorithm B, one needs to show that an upper bound of Algorithm A is smaller than a lower bound of Algorithm B. As a compromise, we make the \emph{comparison based on upper bounds}: the upper bound of DA algorithms from this work and the best known upper bounds of the alternative algorithms in the literature. We remark on the possibility that the upper bounds could not be tight, failing to reflect the actual complexity, and thus making the comparison invalid. 

In addition, we remind the readers of the potential risk of understating the efficiency of the generic algorithm, if one directly applies the generic guarantees to specific algorithms. While the DA algorithms work for specific targets, most guarantees for alternative algorithms are proposed for a general class of distributions. They can be possibly improved for the three specific distributions.  Furthermore, without access to their exact values, we can only substitute the best attainable upper bounds of the important quantities, such as condition numbers and isoperimetric constants, into the guarantees of alternative generic sampling algorithms. This could worsen the guarantees. As a result of these limitations, we only take our comparison as a heuristic discussion, without drawing an affirmed conclusion of the superiority of any algorithm. 

We will focus on ProbitDA and LogitDA, as the target of LassoDA is not regular enough to fit in the settings of most existing analyses. Standard assumptions of the analysis on the generic sampling algorithm include a strong log-concavity constant $m>0$ and a gradient Lipschitz constant $L$ (i.e., the $L$-smoothness condition). It is not hard to generalize the strong log-concavity to isoperimetry, which is satisfied for the transformed LassoDA's target (Lemma 4.9). However, the transformed LassoDA target does not have a uniform gradient Lipschitz constant, making it difficult to apply the existing guarantees.

\paragraph*{ProbitDA/LogitDA v.s. LMC} Langevin Monte Carlo (LMC) is a canonical sampling algorithm, which iterates according to the discretization of the Langevin diffusion. Despite the long history, it was only analyzed in non-asymptotic settings recently (e.g. \cite{cheng2018convergence, durmus2019analysis, vempala2019rapid, durmus2017nonasymptotic, dalalyan2012sparse, dalalyan2017further, dalalyan2017theoretical, durmus2019high}). Among the works in the standard $m$-strongly log-concave and $L$-smooth setting, \cite{durmus2019analysis} obtains the mixing time guarantee $\tilde{\mathcal{O}}(\kappa d/\epsilon)$ in KL divergence for LMC with the Euler–Maruyama discretization, where the dependencies on both $d$ and $\kappa$ are currently the best. This can be translated into $\tilde{\mathcal{O}}(\kappa d/\epsilon^2)$ in TV distance using Pinsker's inequality. Using the results in the Equations \eqref{e:probit-convex}, \eqref{e:logit-convex}, \eqref{e:probit-smooth}, and \eqref{e:logit-smooth}, considering the same setting and procedure of Corollary \ref{t:main-cor} to specify dependency of $\|X\|_{\mathrm{op}}$, and assuming that $\lambda_{\min}(B) = \Omega(1)$ and $\lambda_{\max}(B) = \mathcal{O}(1)$, we can obtain that $\kappa^{\Probit} \le \frac{L^{\prime \Probit}}{m^{\prime \Probit}} =\mathcal{O}(n)$ and $ \kappa^{\Logit} \le \frac{L^{\prime \Logit}}{m^{\prime \Logit}}=\mathcal{O}(n)$. This results in a $\tilde{\mathcal{O}}(n d/\epsilon^2)$ mixing time guarantee for LMC on the targets of ProbitDA and LogitDA. We first note that the LMC result has a polynomial dependence on the error parameter $\epsilon$ while our results for ProbitDA and LogitDA have a superior logarithmic dependence
on $\epsilon$. Furthermore, the guarantee for LMC has an extra $d$ dependence compared to our results for ProbitDA and LogitDA.

Some more sophisticated designs could potentially make LMC faster. Motivated by the acceleration phenomenon in optimization, the Underdamped LMC (ULMC) is an important variant of LMC in which the momentum is refreshed continuously. The current best mixing time guarantees for ULMC is $\tilde{\mathcal{O}}\pa{\frac{\kappa^{\frac{3}{2}}\sqrt{d}}{\sqrt{\epsilon}}}$ in KL divergence \cite{ma2021there, zhang2023improved}, equivalently  $\tilde{\mathcal{O}}\pa{\frac{\kappa^{\frac{3}{2}}\sqrt{d}}{\epsilon}}$ in TV distance. Using the same method as in LMC, the bound becomes $\tilde{\mathcal{O}}\pa{\frac{n^{\frac{3}{2}}\sqrt{d}}{\epsilon}}$ for the targets of ProbitDA and LogitDA, which is worse than our guarantees for ProbitDA and LogitDA. Equipping ULMC with the randomized midpoint discretization, \cite{shen2019randomized} obtains a mixing time guarantee $\tilde{\mathcal{O}}\pa{\frac{\kappa d^{\frac{1}{3}}}{\epsilon^{2/3}}+\frac{\kappa^{\frac{7}{6}}d^{\frac{1}{6}}}{\epsilon^{1/3}}}$ in 2-Wasserstein distance, which translates into $\tilde{\mathcal{O}}\pa{\frac{nd^{\frac{1}{3}}}{\epsilon^{2/3}}+\frac{n^{\frac{7}{6}}d^{\frac{1}{6}}}{\epsilon^{1/3}}}$ for ProbitDA and LogitDA. We further note that,  using the inequality $W_2^2(\mu_1, \mu_2) \le \frac{2}{m} \chi^2(\mu_1||\mu_2)$ under the assumption that $\mu_2$ is $m$-strongly log-concave (see \cite[Corollary 9.3.2]{bakry2014analysis}), our guarantees in Corollary \ref{t:main-cor}, namely $\mathcal{O}\pa{n \log\pa{\frac{\log \eta}{\epsilon}}}$, also extend to the $2$-Wasserstein distance. Therefore, compared to our results, the bound in \cite{shen2019randomized} exhibits superlinear dependence on $n$ and an additional dependence on $d$. 

\paragraph*{ProbitDA/LogitDA v.s. MALA} Metropolis Adjusted Langevin Algorithm (MALA) is a fundamental high-accuracy sampler. MALA runs an additional Metropolis accept-reject step in each iteration of LMC, which adjusts the bias in stationary distribution. Among the recent line of works analyzing the mixing time of MALA \cite{altschuler2024faster, wu2022minimax, chen2023does, chen2020fast, dwivedi2019log, lee2020logsmooth, chewi2021optimal}, \cite{wu2022minimax, altschuler2024faster} obtain the state-of-the-art $\mathcal{O}(\kappa d^{1/2})$ complexity bound in TV distance for MALA in $m$-strongly log-concave and $L$-smooth setting. Following the same argument as in our discussion of LMC, the bound can be translated into $\mathcal{O}(n d^{1/2})$ for the targets of ProbitDA and LogitDA. We note that the MALA guarantee has an extra $d^{1/2}$ dependence compared to our results for the two DA algorithms. 
\\

Despite the obstacles, the comparison provides insight into the superiority of the mixing time of ProbitDA and LogitDA over some generic sampling algorithms. We leave a more thorough and more conclusive comparison for future research.

\begin{algorithm}[t]
\caption{LMC}
\begin{algorithmic}[1]
\INPUT The target distribution $\pi(x) \; \propto \; e^{-f(x)}$, on $\mathbb{R}^d$, the step size $h > 0$
\State Draw $x^{(0)}$ from an initial distribution.
\For{$m = 1,2,\ldots$}
  \State Draw $\xi^{(m)} \sim \mathcal{N}(0, \mathbb{I}_d)$
  \State Compute $x^{(m)}=x^{(m)} - h \nabla f \pa{x^{(m-1)}} + \sqrt{2h} \xi^{(m)}$
\EndFor
\end{algorithmic}
\label{a:lmc}
\end{algorithm}

\begin{algorithm}[t]
\caption{MALA}
\begin{algorithmic}[1]
\INPUT The target distribution $\pi(x) \; \propto \; e^{-f(x)}$, on $\mathbb{R}^d$, the step size $h > 0$
\State Draw $x^{(0)}$ from an initial distribution.
\For{$m = 1,2,\ldots$}
  \State Propose $y^{(m)}$ from the Langevin step: $y^{(m)}=x^{(m)} - h \nabla f \pa{x^{(m)}} + \sqrt{2h} \xi^{(m)}$ 
  \State Let $\mathcal{N}(x; \mu, \Sigma)$ be the pdf of $\mathcal{N}(\mu, \Sigma)$ evaluated at x. Compute the acceptance probability $a=\min\bc{1, \frac{\pi(y^{(m)})p(y^{(m)}, x^{(m+1)})}{\pi(x^{(m-1)})p(x^{(m-1)},y^{(m)})}}$, 
  where $p(x,y)=\mathcal{N}(y; x - h \nabla f (x), 2h\mathbb{I}_d)$. 
  \State Draw $u\sim \textup{Unif}[0,1]$. If $u \le a$, $x^{(m)} = y^{(m)}$. Otherwise, $x^{(m)} = x^{(m-1)}$
\EndFor
\end{algorithmic}
\label{a:mala}
\end{algorithm}

\subsection{Cost per iteration} \label{s:cost-per-iteration-compare} This subsection presents a comparison of the computational complexities per iteration among the three DA algorithms and the LMC/MALA methods. We set aside the cost of computing the inverse $B$ for ProbitDA and LogitDA, since it is shared across all algorithms. In line with practical implementations, we adopt the naive method for matrix multiplication, which yields a complexity of $\mathcal{O}(ndk)$ for multiplying an $\mathbb{R}^{n \times d}$ matrix with an $\mathbb{R}^{d \times k}$ matrix, and $\mathcal{O}(d^3)$ for inverting an $\mathbb{R}^{d \times d}$ matrix, although better theoretical bounds \cite{alman2021limits, le2014powers} are available using more advanced algorithms.

\paragraph*{LMC/MALA} The dominant cost in each iteration of LMC and MALA is computing the log-gradient of the posterior. Other significant computations include evaluating the density and sampling from a $d$-dimensional Gaussian with diagonal covariance for MALA, which both have cost $\mathcal{O}(d)$. As we will see shortly, these costs do not exceed that of computing the log-gradient.

We start with the log-gradients of the posterior for ProbitDA in Equation~\eqref{e:probit-log-gradient} and LogitDA in Equation~\eqref{e:logit-log-gradient}. For ProbitDA, multiplying $B^{-1}$ by $\beta-b$ costs $\mathcal{O}(d^2)$, while computing $-\sum_{i=1}^{n} y_i x_i\frac{\phi(x_i^T\beta) }{\Phi(x_i^T\beta)} + \sum_{i=1}^{n}(1-y_i) x_i\frac{\phi(x_i^T\beta)}{1-\Phi(x_i^T\beta)}$ costs $\mathcal{O}(nd)$. Overall, the per-iteration cost is $\mathcal{O}(d\max\{n,d\})$. Similarly, computing the log-gradient of LogitDA has the same $\mathcal{O}(d\max\{n,d\})$ cost. 

For LassoDA, there are two approaches to generate samples using generic sampling algorithms: one can either sample directly from the original target or sample from the transformed target and then transform the samples back. The log-gradient of the original LassoDA target in Equation \eqref{e:lasso-pos} is 
\begin{align*}
  \frac{\partial f^{\Lasso}}{\partial \beta} &\propto \frac{1}{2v}(X^TX\beta-2X\tilde{y}) + \lambda \frac{\sign(\beta)}{\sqrt{v}} \\
\frac{\partial f^{\Lasso}}{\partial v} &\propto \frac{n+d+2\alpha+1}{2v} - \frac{1}{2v^2}\|\tilde{y}-X\beta\|_2^2 -\frac{\xi}{v^2} - \frac{1}{2\sqrt{v^3}}  .
\end{align*} The dominant computations are $X^TX\beta$ and $X\tilde{y}$, each with complexity $\mathcal{O}(nd)$. For the transformed target in Equation \eqref{e:transformed-target}, the log-gradient is
\begin{align*}
    \frac{\partial f_T^{\Lasso}}{\partial \varphi} &\propto \frac{1}{2}(X^TX\varphi - 2\rho y^TX) +\lambda \sign(\varphi) \\ \frac{\partial f_T^{\Lasso}}{\partial \rho} &\propto   -\frac{n+2\alpha-2}{\rho} - y^TX\varphi + \rho y^Ty + 2\xi \rho.
\end{align*}
Here, the dominant computations are $X^TX\varphi$ and $y^TX$, both with complexity $\mathcal{O}(nd)$. In either case, the per-iteration cost for LassoDA is $\mathcal{O}(nd)$. 

\paragraph*{ProbitDA}
Several expensive computations only need to be performed once for ProbitDA, and we evaluate this pre-computation cost separately. Specifically, $(B^{-1}+X^TX)^{-1}$ only needs to be computed once and reused in every iteration, which costs $\mathcal{O}(nd^2)$ for forming $X^TX$ and $\mathcal{O}(d^3)$ for direct inversion via Cholesky factorization (i.e., $(B^{-1}+X^TX)^{-1}=LL^T$, where $L$ is lower-triangular). Overall, the pre-computation cost for ProbitDA is $\mathcal{O}(d^2\max\{n,d\})$.

Each iteration can then be carried out in $\mathcal{O}(d\max\{n,d\})$ if we reuse both the inverse $(B^{-1}+X^TX)^{-1}$ and its Cholesky factorization $L$. In particular, the multiplication $(B^{-1}+X^TX)^{-1}X^Tz$ requires only $\mathcal{O}(nd)$, which is the same order as sampling $n$ truncated normals. In addition, $\beta$ can be sampled in $\mathcal{O}(d^2)$ using
$$\beta=(B^{-1}+X^TX)^{-1}(X^Tz+B^{-1}b) + L \xi, \text{ where } \xi \sim \mathcal{N}(0, \mathbb{I}_d).$$
Other computations are negligible compared to these.

\paragraph*{LogitDA} The most expensive computation for LogitDA is $(B^{-1}+X^T\Omega X)^{-1}$, which must be recomputed at every iteration. Since $\Omega$ is diagonal, forming $X^T\Omega X$ costs $\mathcal{O}(nd)$, and the inversion costs $\mathcal{O}(d^3)$ via direct Cholesky factorization, which is required for sampling. Other operations are negligible in comparison. Overall, the per-iteration cost is $\mathcal{O}(nd+d^3)$.

\paragraph*{LassoDA} Similarly, the dominant cost for LassoDA is computing $(X^TX+D_z^{-1})^{-1}$. We can precompute $X^TX$ in $\mathcal{O}(nd^2)$, while in each iteration the inversion of $(X^TX+D_z^{-1})$ requires $\mathcal{O}(d^3)$.

In conclusion, ProbitDA has the same per-iteration cost as LMC and MALA once precomputation is performed, whereas the cost of computing each iteration in LogitDA and LassoDA is higher than that of LMC and MALA.

\section{Numerical experiments}\label{s:experiments}

In this section, we study the dependencies of the mixing time of three DA algorithms on $n$ and $d$ through computer simulations. Specifically, we investigate the following three scenarios:

\begin{enumerate}[leftmargin=3cm, label={\text{Scenario} \arabic*}]
    \item(Both $n$ and $d$ grow): $n=d=50,100,150,\ldots,1000$.
    \item($d$ fixed, $n$ grows): $d=500$, $n=50,100,150,\ldots,1000$. 
    \item($n$ fixed, $d$ grows): $n=500$, $d=50,100,150,\ldots,1000$. 
\end{enumerate}

We will introduce the notion of relaxation time, a proxy for mixing time in Section~\ref{s:sim-tauto}. We then present the simulation settings and results for ProbitDA and LogitDA in Section~\ref{s:sim-probit}, and LassoDA in Section~\ref{s:sim-lasso}. 

\subsection{Relaxation Time}\label{s:sim-tauto}
Due to the difficulty in calculating TV distance, a good estimator for mixing time is not easily obtainable. We instead study a closely related quantity, $L^2$ relaxation time.
 
To give formal definitions, we consider samples from a Markov chain with transition kernel $\mathcal{P}$ starting from the stationary distribution $\pi$: $\theta_0, \theta_1, \theta_2,\ldots$ with $\theta_0 \sim \pi$. We restrict ourselves to reversible chains with non-negative spectrum, which include the DA chains \cite[Lemma 3.2]{liu1994covariance}. Let $L^2(\pi)$ be the space of square integrable functions under the function $\pi$ with inner product $\ip{f,g}_\pi = \int f g d\pi.$ Then, the \textit{relaxation time} can be defined as the inverse of the spectral gap,
 \[
 t_{\textup{rel}} = \frac{1}{1-\lambda}, 
 \]
where 
$\lambda = \sup_{f \in L^2_0(\pi)} \frac{\ip{f, \mathcal{P}f}_{\pi}}{\ip{f, f}_{\pi}}$ and $L^2_0(\pi) = \{f \in L^2(\pi): \int f d\pi = 0 \}$. We assume $\lambda<1$.

Suppose $\mathcal{G}$ is the inverse operator of the generator $\mathbb{I}-\mathcal{P}$. One can show that $\mathcal{G}$ satisfies $\mathcal{G}f = \sum_{m=0}^\infty \mathcal{P}^m f$ for $f\in L^2(\pi)$. Then, we have 
\begin{align}\label{e:trel_tauto}
    t_{\textup{rel}}&=\sup_{f \in L_0^2(\pi)} \frac{\ip{f, Gf}_{\pi}}{\ip{f, f}_{\pi}}= \sup_{f \in L_0^2(\pi)} \frac{\sum_{m=0}^\infty \textup{Cov}_\pi(f(\theta_0), f(\theta_m))}{\textup{Var}_{\pi}(f(\theta_0))} \\
    &= \sup_{f \in L^2_0(\pi)} \sum_{m=0}^\infty \textup{Corr}_\pi(f(\theta_0), f(\theta_m)):= \sup_{f \in L_0^2(\pi)} t_{\textup{rel},f}. \nonumber
\end{align} Here, we define $t_{\textup{rel},f}:=\sum_{m=0}^\infty \textup{Corr}_\pi(f(\theta_0), f(\theta_m))$. 

We can estimate $t_{\textup{rel},f}$ by summing up Pearson correlations calculated using samples after a certain burn-in period. Specifically, with maximum iteration $N$, burn-in period $n_0$, and maximum lag $M$, we have $\widehat{t_{\textup{rel},f}} = \sum_{m=0}^{\min\{m_0,M\}} \gamma_f(m), $
where $m_0 =\max\{m: \gamma_f(m) > 0
\}$, and $\gamma_f(m)$ is the Pearson correlation between $\{f(\theta_i)\}_{i=n_0}^{N-m}$ and $\{f(\theta_i)\}_{i=n_0+m}^N$.
 That is, we only sum sample correlations up to the point when the correlation first crosses the zero-axis or the lag reaches the maximum lag. We take $N=1000, n_0=200$, and $M=100$ in our simulations. 
 
It is impossible to calculate $t_{\textup{rel},f}$ with respect to every possible test function in the space $L^2_0(\pi)$. Therefore, in this simulation, we restrict our analysis to the projection maps onto each coordinate. We then calculate $t_{\textup{rel},f}$ for each individual coordinate and use the maximum of these results as a proxy for the overall relaxation time. Specifically, 
 \begin{align*}
     t^{\textup{Proj}}_{\textup{rel}}(\mathcal{D}) &= \max_{1 \le k \le d}\widehat{t_{\textup{rel},\operatorname{Proj}_k}} 
 \end{align*}
 where $\operatorname{Proj}_k(\theta)$ with $k =1,...,d$ is the projection map to the $k^{th}$ coordinate of $\theta$, and $\mathcal{D} = [X, y]$ is the dataset the simulation is run on.  

Because that we only use a subset of test functions in $L^2_0(\pi)$ and that relaxation time is usually smaller than mixing time \cite[Theorem 12.5]{levin2017markov},  $t^{\textup{Proj}}_{\textup{rel}}(\mathcal{D})$ serves as a lower bound for mixing time. If in the simulation results, this quantity scales as our guarantees for mixing time, we obtain empirical evidence supporting the tightness of our bounds.

Furthermore, to account for the randomness in data generation, we generate 100 datasets and take the average of the resulting estimates. That is 
 \begin{align*}
\overline{t^{\textup{Proj}}_{\textup{rel}}} &= \sum_{i=1}^{100} t^{\textup{Proj}}_{\textup{rel}}(\mathcal{D}_i)
 \end{align*}

\subsection{Results for ProbitDA and LogitDA} \label{s:sim-probit}
We consider the following prior information and data-generating process: 
\begin{align*}
    b &= \mathbf{0}, \quad B = \mathbb{I}_d,  \\
    \beta_0 &\sim \mathcal{N}([1 \quad \mathbf{0}]^T, \mathrm{diag}([0 \quad \mathbf{1}_{d-1}])),\quad  a \in \mathbb{R},\\
    x_i &\mathop{\sim}\limits^{\mathrm{i.i.d.}} [1 \quad \mathcal{N}(0, \mathbb{I}_{d-1})/\sqrt{d}]^T, \quad y_i \sim \Ber(\Phi(x_i^T\beta_0)), \quad i=1,..., n.
\end{align*}
We note that we only generate one fixed $\beta_0$ for each dataset $\mathcal{D} = [X,y]$, but vary the value of $\beta_0$ across datasets. 

\paragraph*{The worst case scenario} For the response vector $y$, we first consider 
$$
y_i = 1, \quad i=1,...,n,
$$
which has been identified as the hardest case in \cite{johndrow2019mcmc,ascolani2025mixing}.

We present the plots of $\overline{t^{\textup{Proj}}_{\textup{rel}}}$ for the three scenarios in Figure~\ref{fig:probit_imbalanced} (ProbitDA) and Figure~\ref{fig:logit_imbalanced} (LogitDA). We also fit a linear regression to the points of $\overline{t^{\textup{Proj}}_{\textup{rel}}}$, plot the resulting line, and report both the slope and the corresponding p-value testing the null hypothesis that the slope is zero.

We first discuss Figure~\ref{fig:probit_imbalanced} for ProbitDA. In Scenarios 1 and 2, we observe upward trends with statistically significant positive slopes at the 0.01 level, matching the linear dependence on $n$ predicted by Corollary \ref{t:main-cor}. Meanwhile, Scenario 3 shows no positive slope. This suggests that the constant bound in $d$ from Corollary \ref{t:main-cor} is also likely tight for ProbitDA.

\begin{figure}[t]
    \centering
    \includegraphics[width=0.33\linewidth]{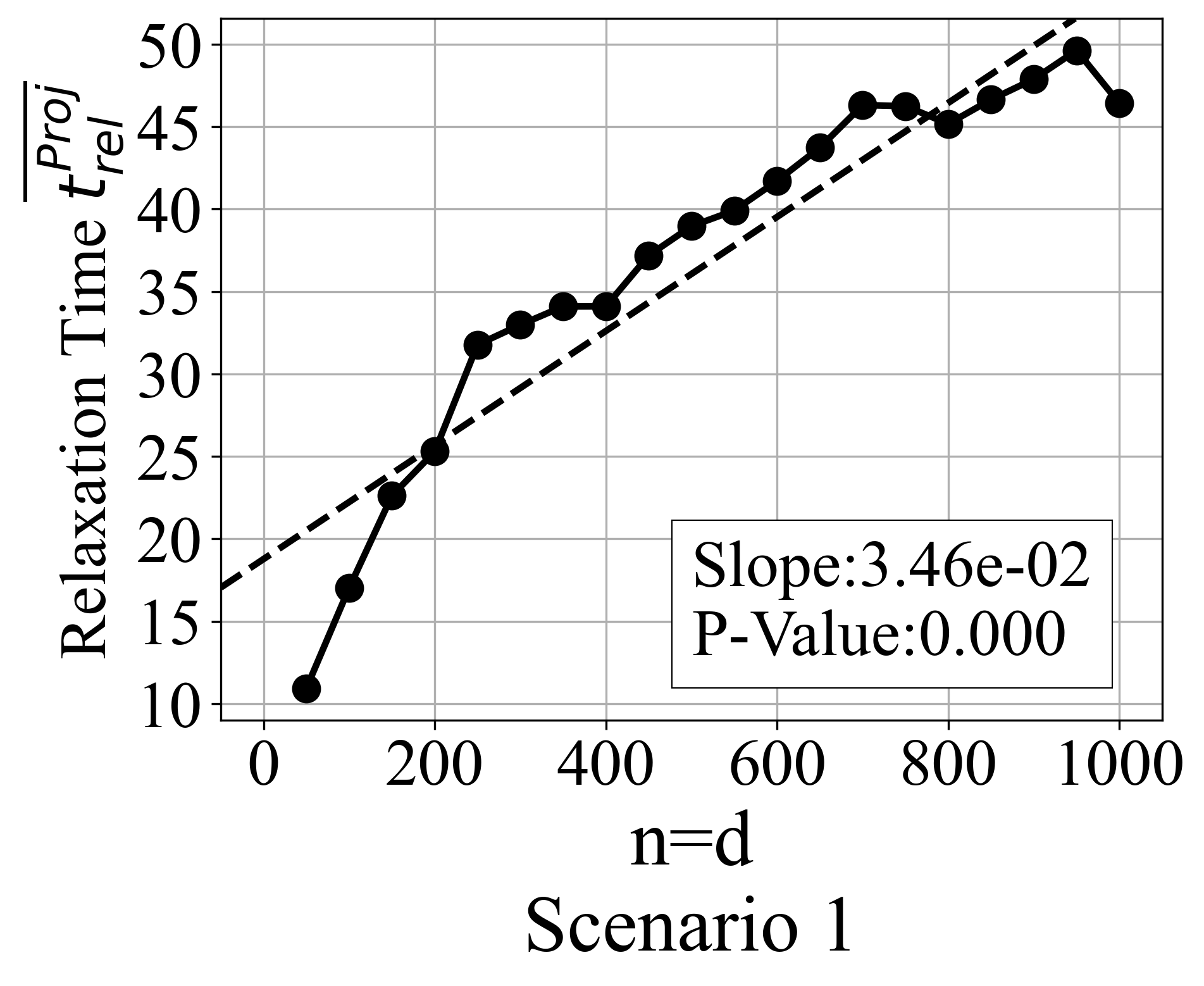}
    \includegraphics[width=0.32\linewidth]{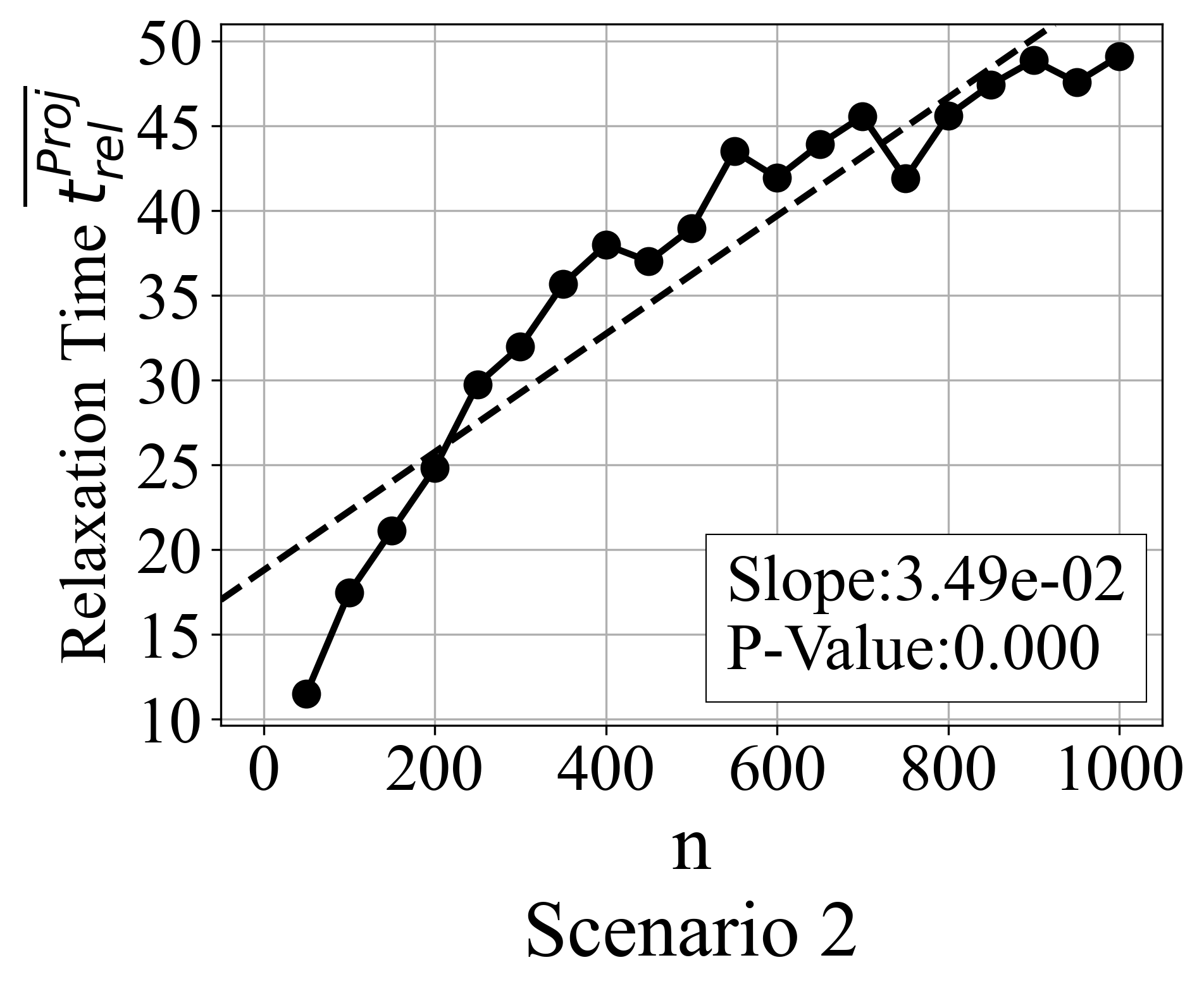}
    \includegraphics[width=0.32\linewidth]{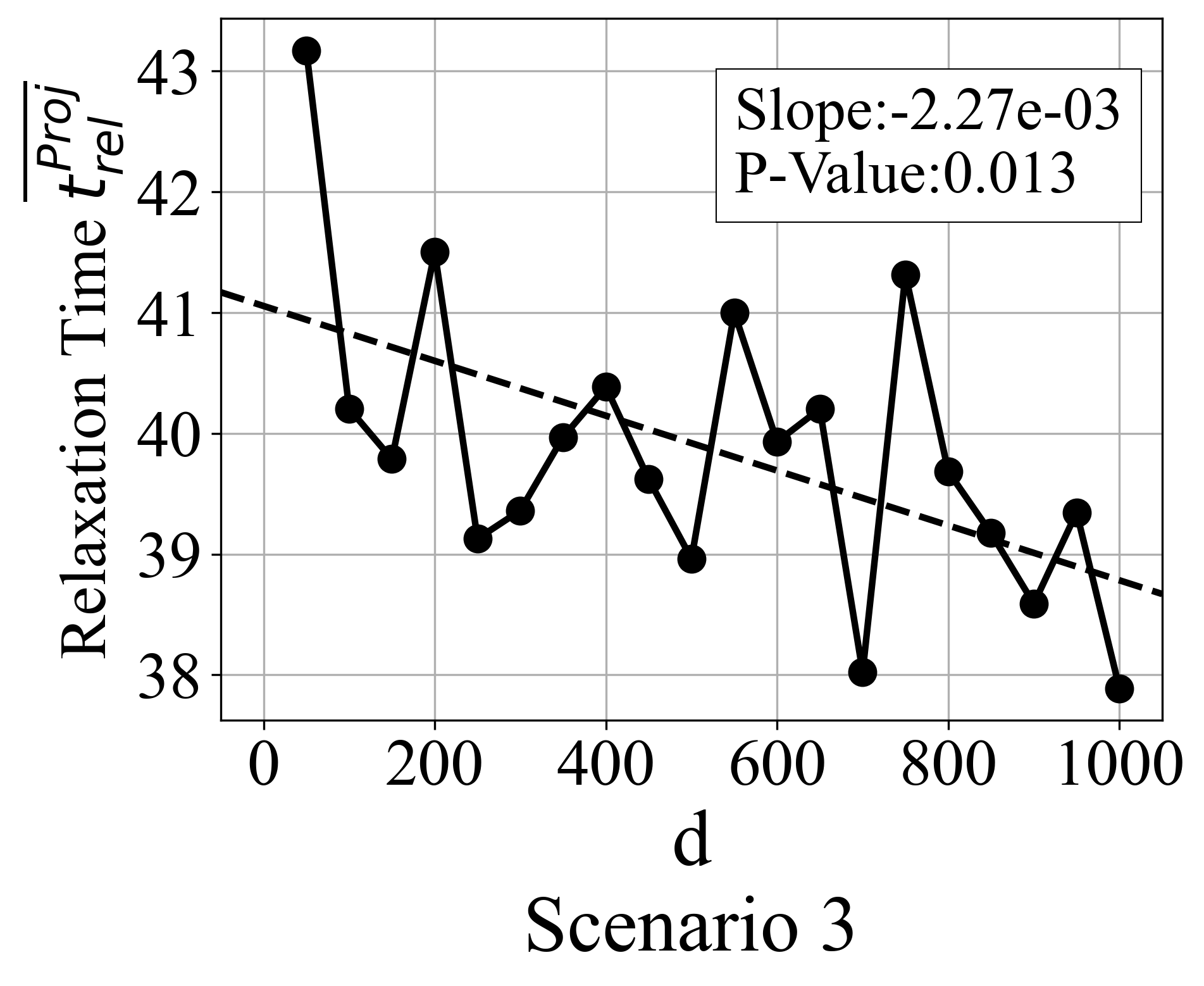}
    \caption{Simulation results for ProbitDA with $y=1$.}
    \label{fig:probit_imbalanced}
\end{figure}

\begin{figure}[t]
    \centering
    \includegraphics[width=0.33\linewidth]{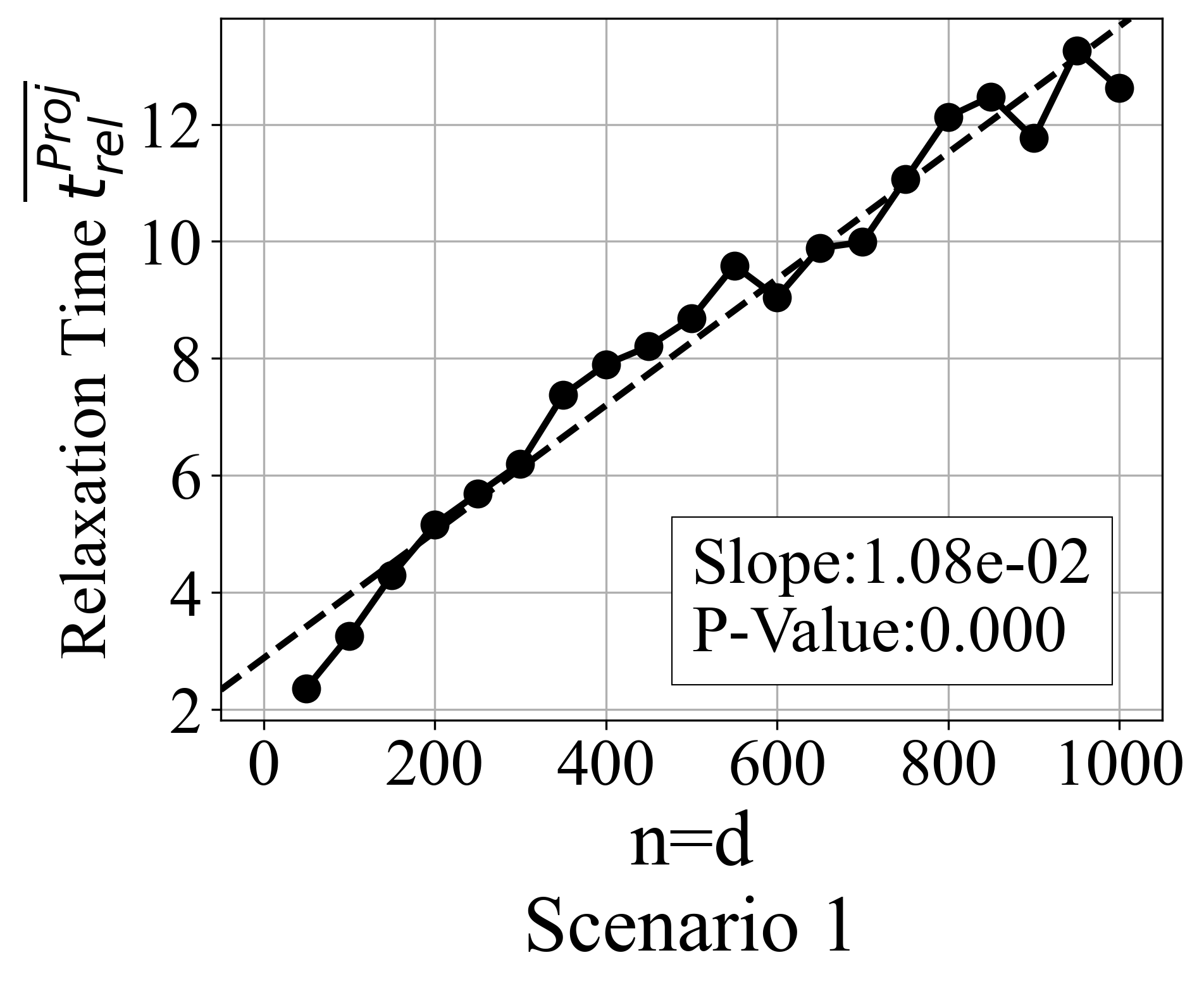}
    \includegraphics[width=0.32\linewidth]{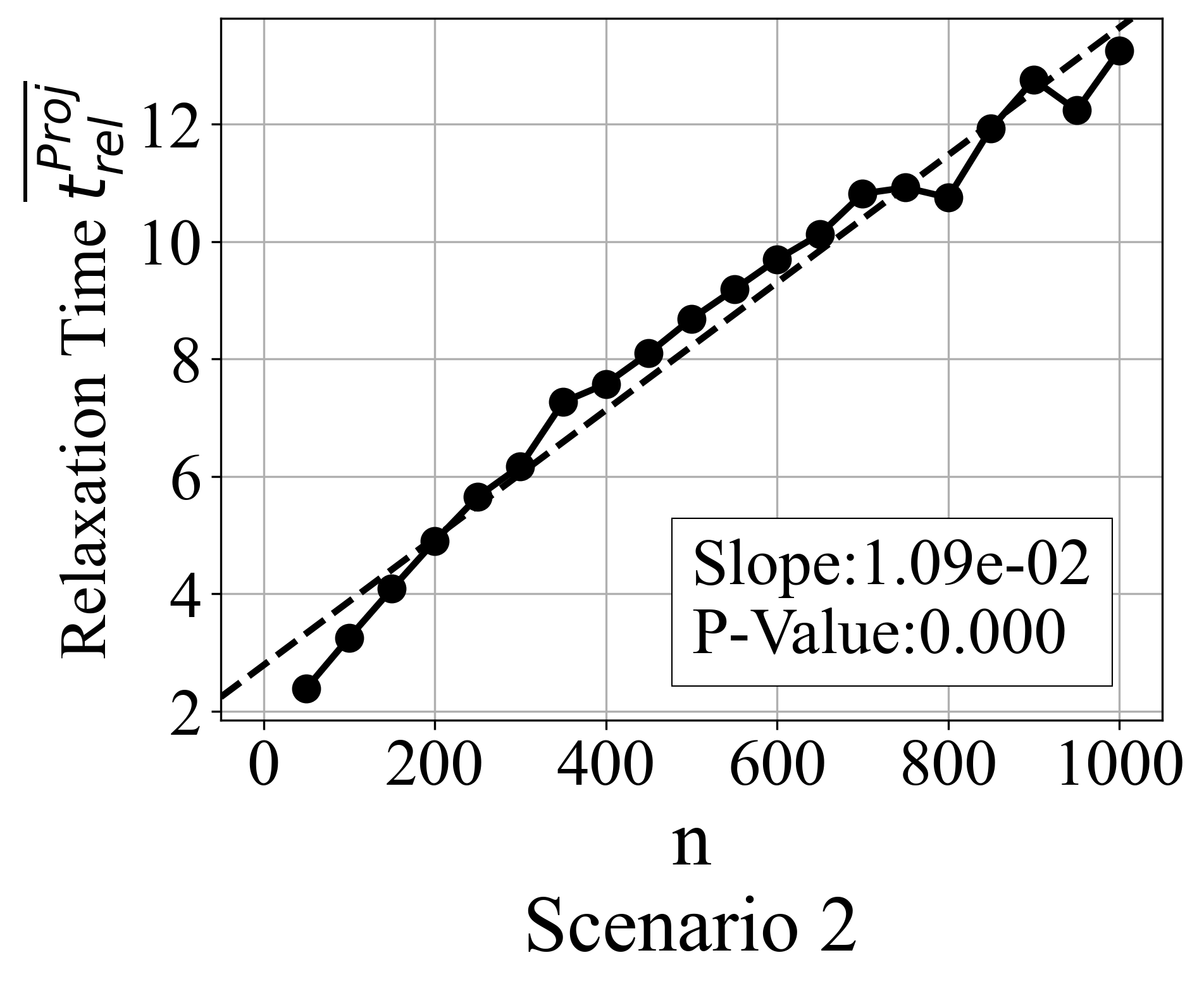}
    \includegraphics[width=0.32\linewidth]{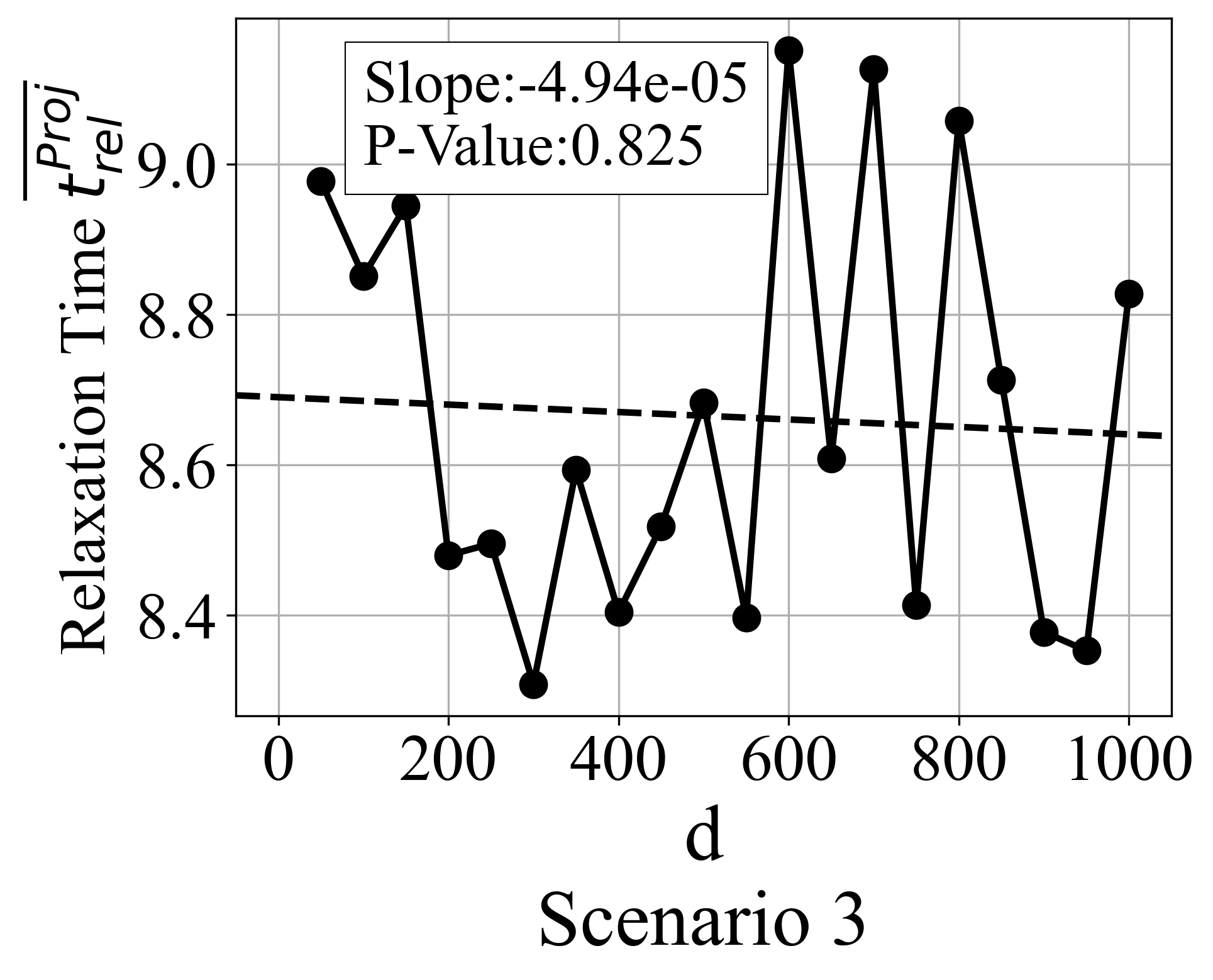}
    \caption{Simulation results for LogitDA with $y=1$. }
    \label{fig:logit_imbalanced}
\end{figure} 

In Figure~\ref{fig:logit_imbalanced}, we observe similar patterns for LogitDA: clear linear growth in Scenarios 1 and 2, and no positive slope in Scenario 3. These observations lead to the same conclusions as for ProbitDA.

\paragraph*{The average case} We also report the case where the response data is generated by the model: For ProbitDA, 
$$
y_i \sim \Ber(\Phi(x_i^T\beta)), \quad i =1,..., n
$$
and for LogitDA,  
$$
y_i \sim \Ber\left(\frac{1}{1+e^{-x_i^T\beta_0}}\right), \quad i=1, \ldots, n. 
$$
We present the plots of $\overline{t^{\textup{Proj}}_{\textup{rel}}}$ for the three scenarios in Figure~\ref{fig:probit} and Figure~\ref{fig:logit} for ProbitDA and LogitDA, respectively.

\begin{figure}[t]
    \centering
    \includegraphics[width=0.33\linewidth]{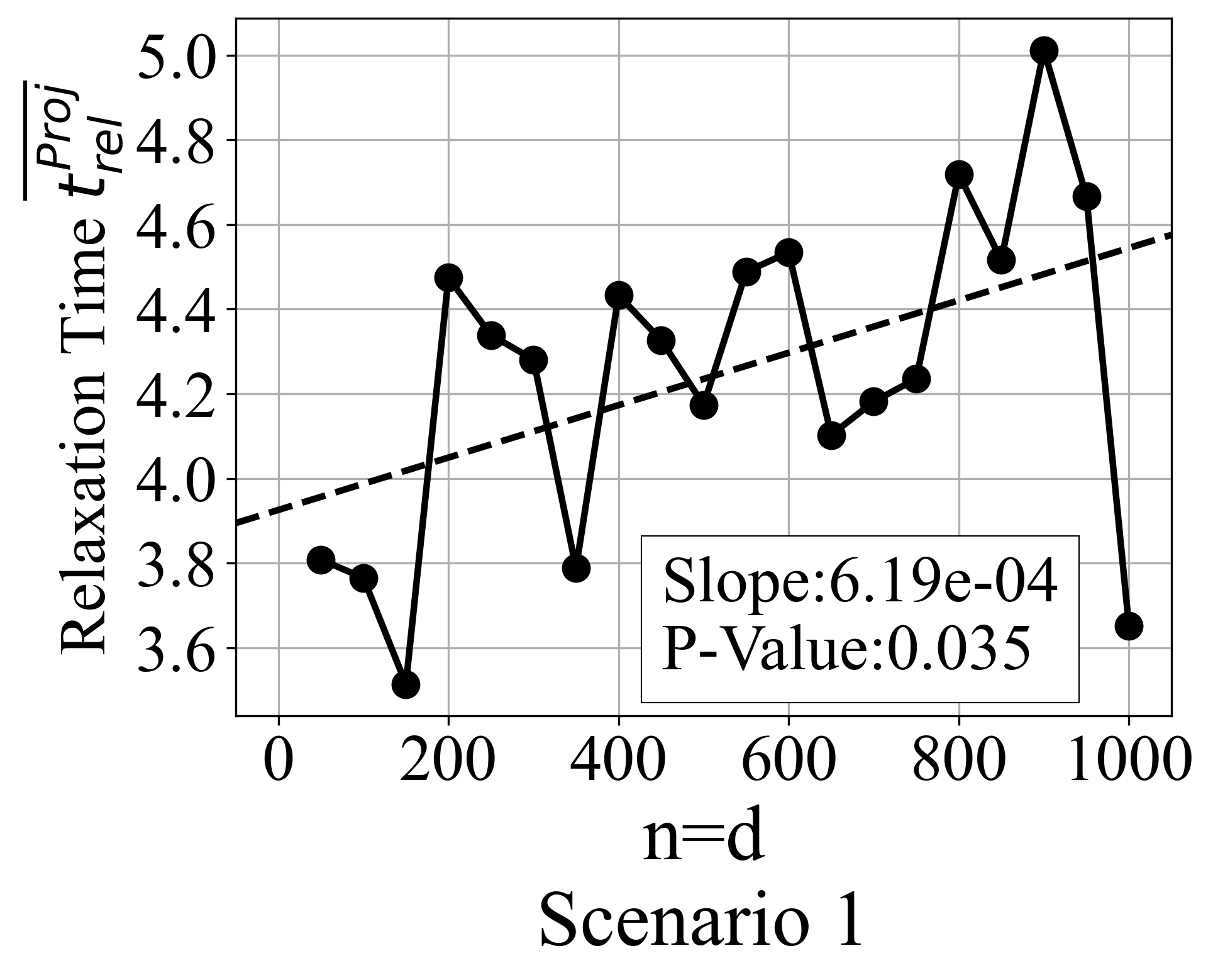}
    \includegraphics[width=0.32\linewidth]{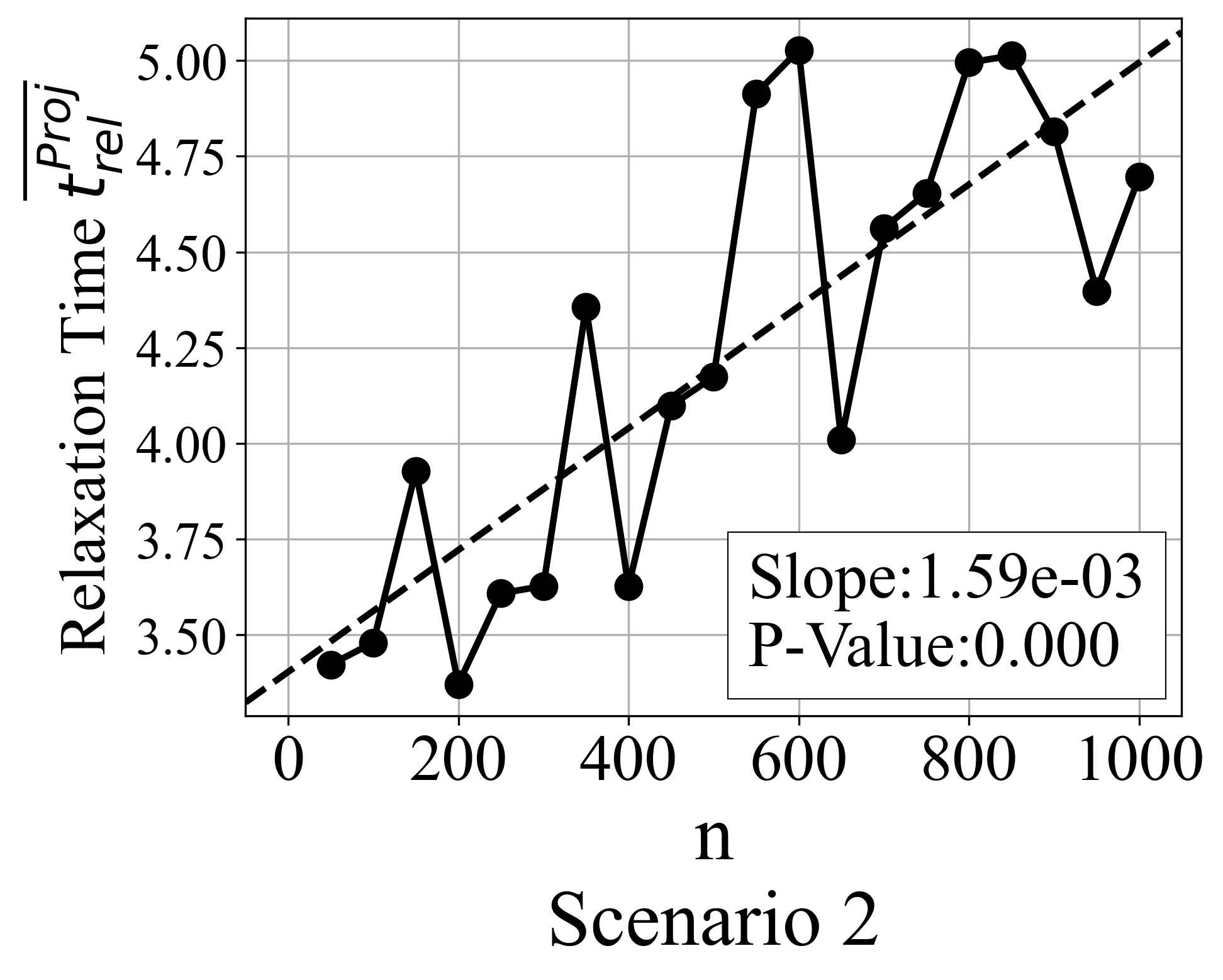}
    \includegraphics[width=0.32\linewidth]{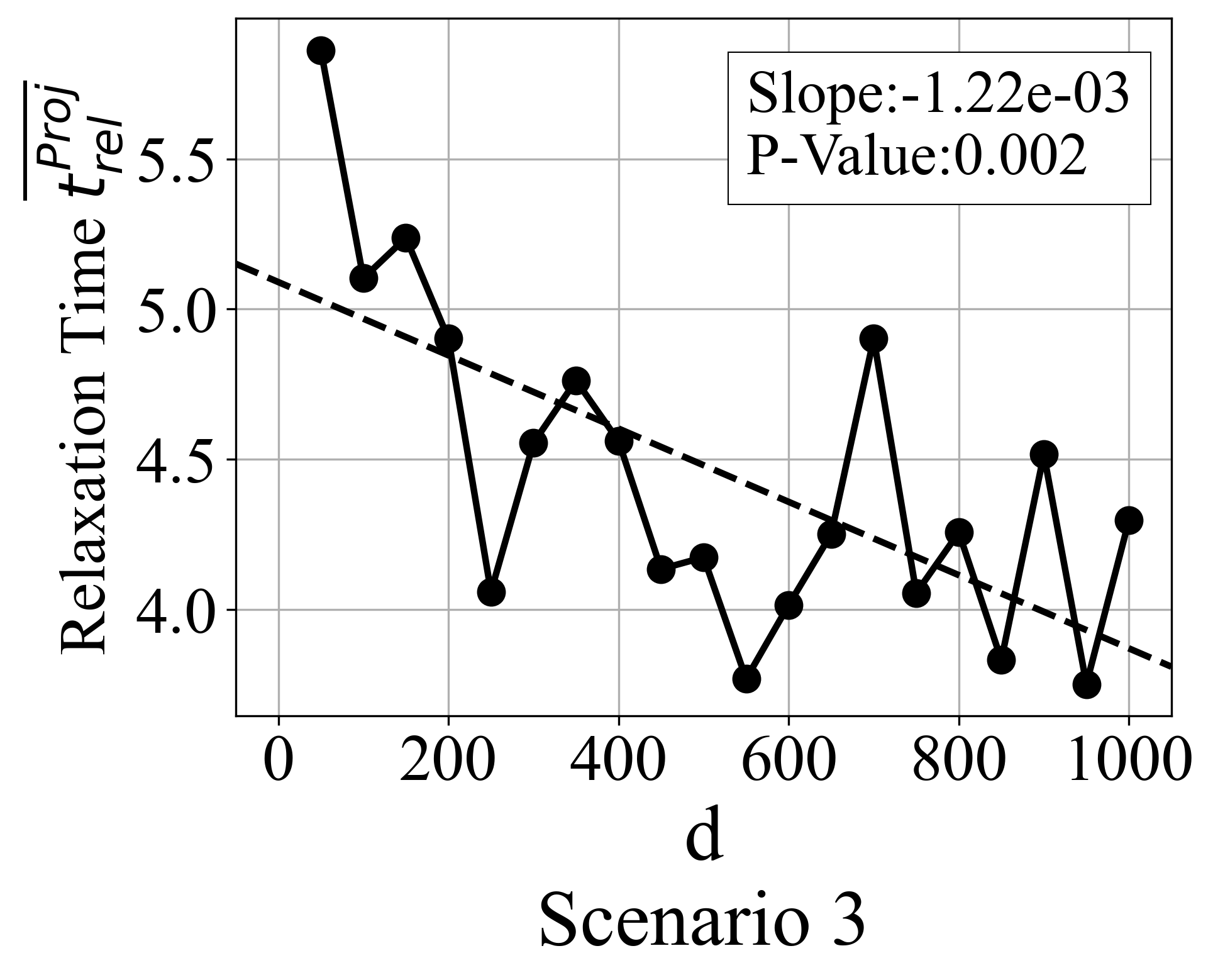}
    \caption{Simulation results for ProbitDA.}
    \label{fig:probit}
\end{figure}

\begin{figure}[t]
    \centering
    \includegraphics[width=0.33\linewidth]{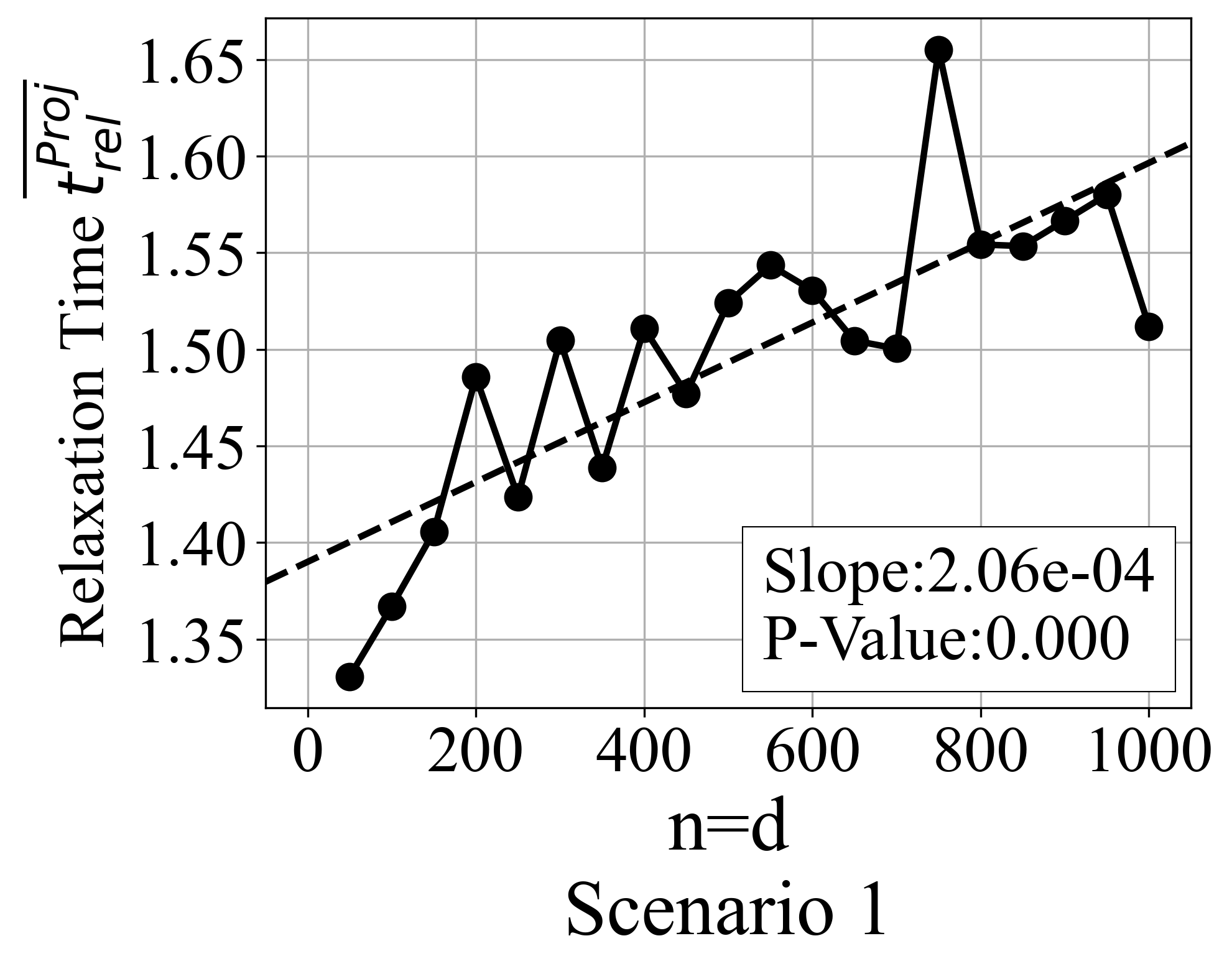}
    \includegraphics[width=0.32\linewidth]{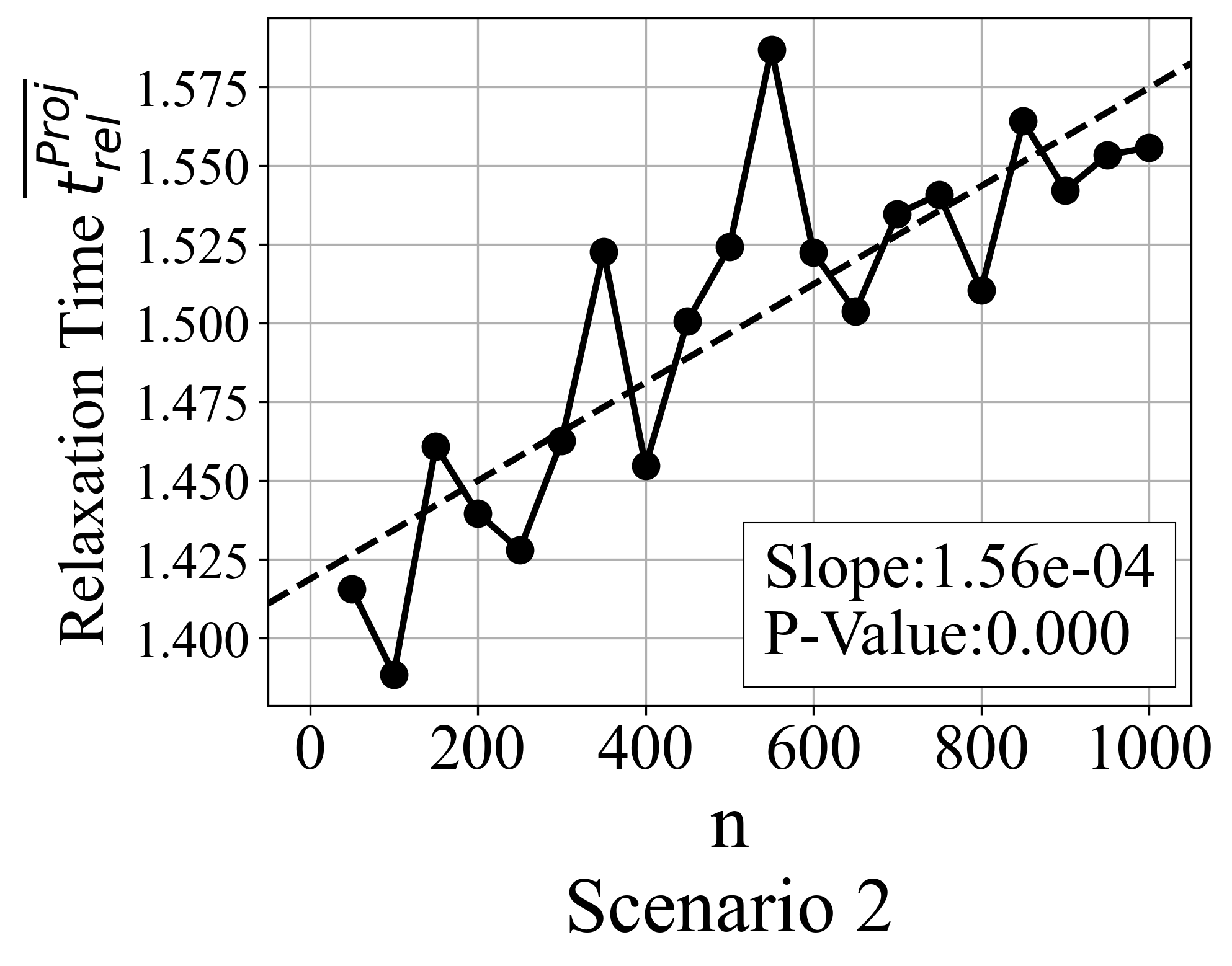}
    \includegraphics[width=0.32\linewidth]{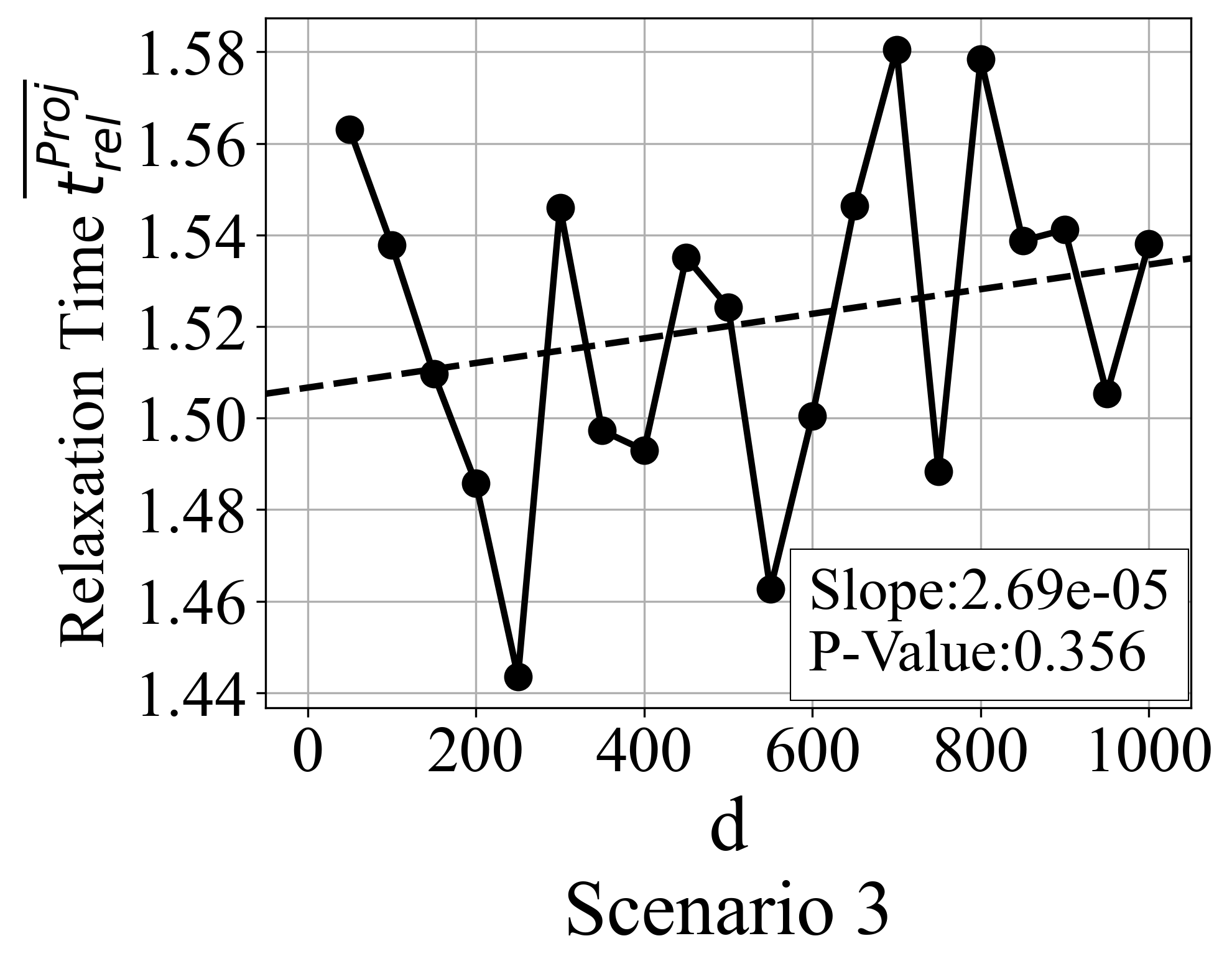}
    \caption{Simulation results for LogitDA. }
    \label{fig:logit}
\end{figure} 

We observe similar patterns to the worst-case setting, but the magnitude is much smaller and the dependence is milder. We note that the linear growth observed in Scenario 2 does not contradict \cite{qin2019convergence}, which shows that when $
d$ is fixed, the mixing time of ProbitDA remains bounded as $n\to \infty$. Since our simulations only cover a limited range of $n$ (up to 1000), the observed trend may level off for larger $
n$ and remain bounded asymptotically.

In view of \cite{qin2019convergence} and the mild dependence seen in the simulations, we suspect that our bound is not tight in the average-case setting, and we leave this question for future work.

\subsection{Results for LassoDA} \label{s:sim-lasso}
We consider the following prior information and data-generating process for LassoDA: 
\begin{align*}
    \xi &= 1, \quad \alpha = 2, \quad \lambda = 1, \quad
    \beta_0  \sim \mathcal{N}(\mathbf{0},  \mathbb{I}_d), \quad v_0 =1, \\ 
    x_i &\mathop{\sim}\limits^{\mathrm{i.i.d.}} \mathcal{N}(\mathbf{0}, \mathbb{I}_d)/\sqrt{d}, \quad y_i \sim \mathcal{N}(x_i^T\beta_0, v_0), \quad i=1,\ldots, n.
\end{align*}

Similarly, for each dataset $\mathcal{D} = [X,y]$, we only generate one $\beta_0$ and keep it fixed throughout. To account for potentially different growth behaviors between $v$ and $\beta$, we plot $\overline{t^{\textup{Proj}}_{\textup{rel}}}$ separately for the $\beta$ and $v$ coordinates in Figure~\ref{fig:lassobeta} and Figure~\ref{fig:lassov}, respectively. 

Overall, the results show similar patterns for $v$ and $\beta$. The results in Scenario 1 show a roughly at most linear joint dependency for $n$ and $d$, so we suspect our bound for LassoDA is not tight. In Scenario 2, we observe a complicated pattern. That is, as $n$ or $d$ grows, $\overline{t^{\textup{Proj}}_{\textup{rel}}}$ first rises and then drops. Our theoretical results do not explain this complex pattern. We leave it for future investigation. In Scenario 3, we observe that $\overline{t^{\textup{Proj}}_{\textup{rel}}}$ exhibits a square-root–type dependence, suggesting a lower bound on the mixing time of LassoDA that scales like $\sqrt{d}$. 

\begin{figure}[t]
    \centering
    \includegraphics[width=0.32\linewidth]{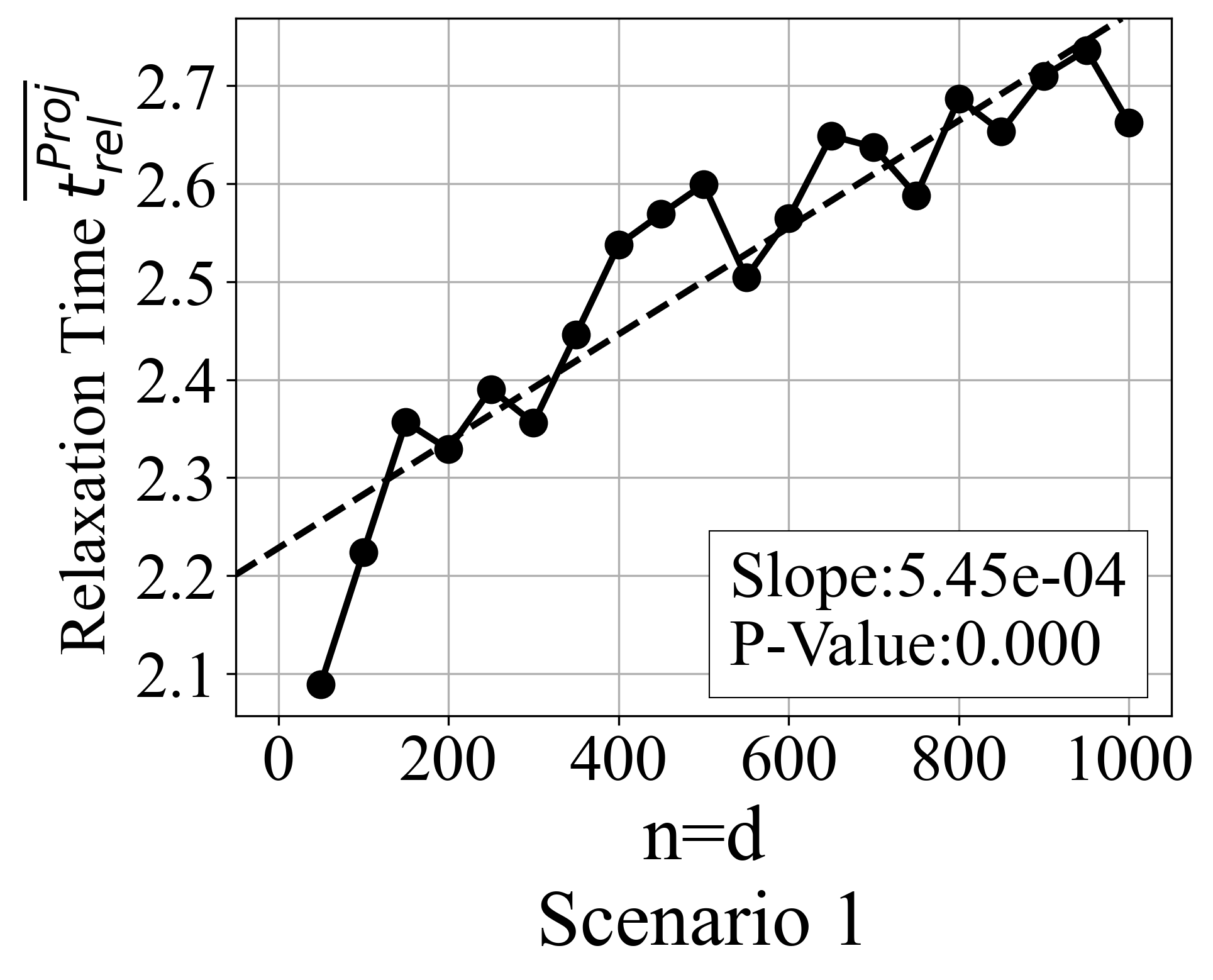}
    \includegraphics[width=0.32\linewidth]{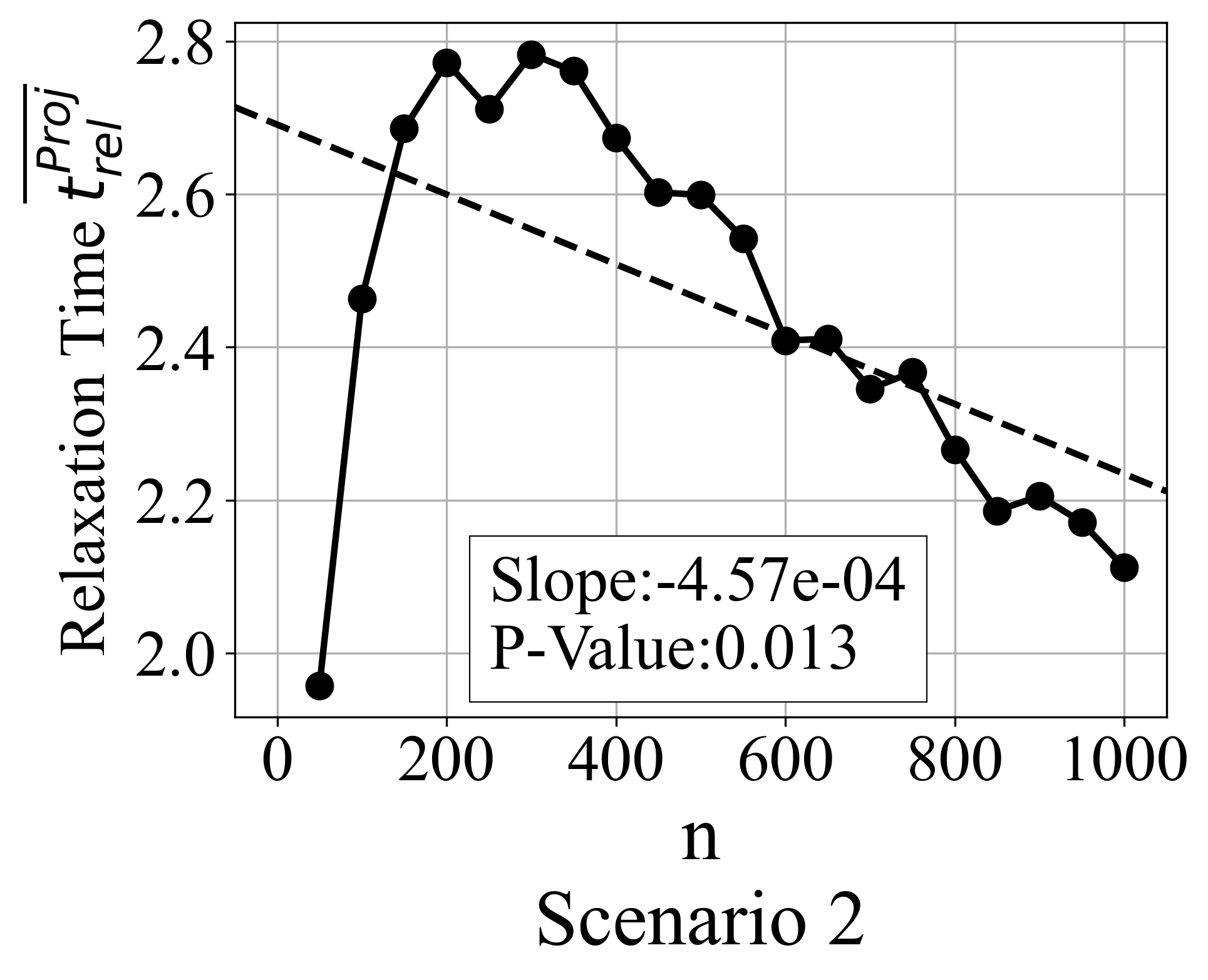}
    \includegraphics[width=0.32\linewidth]{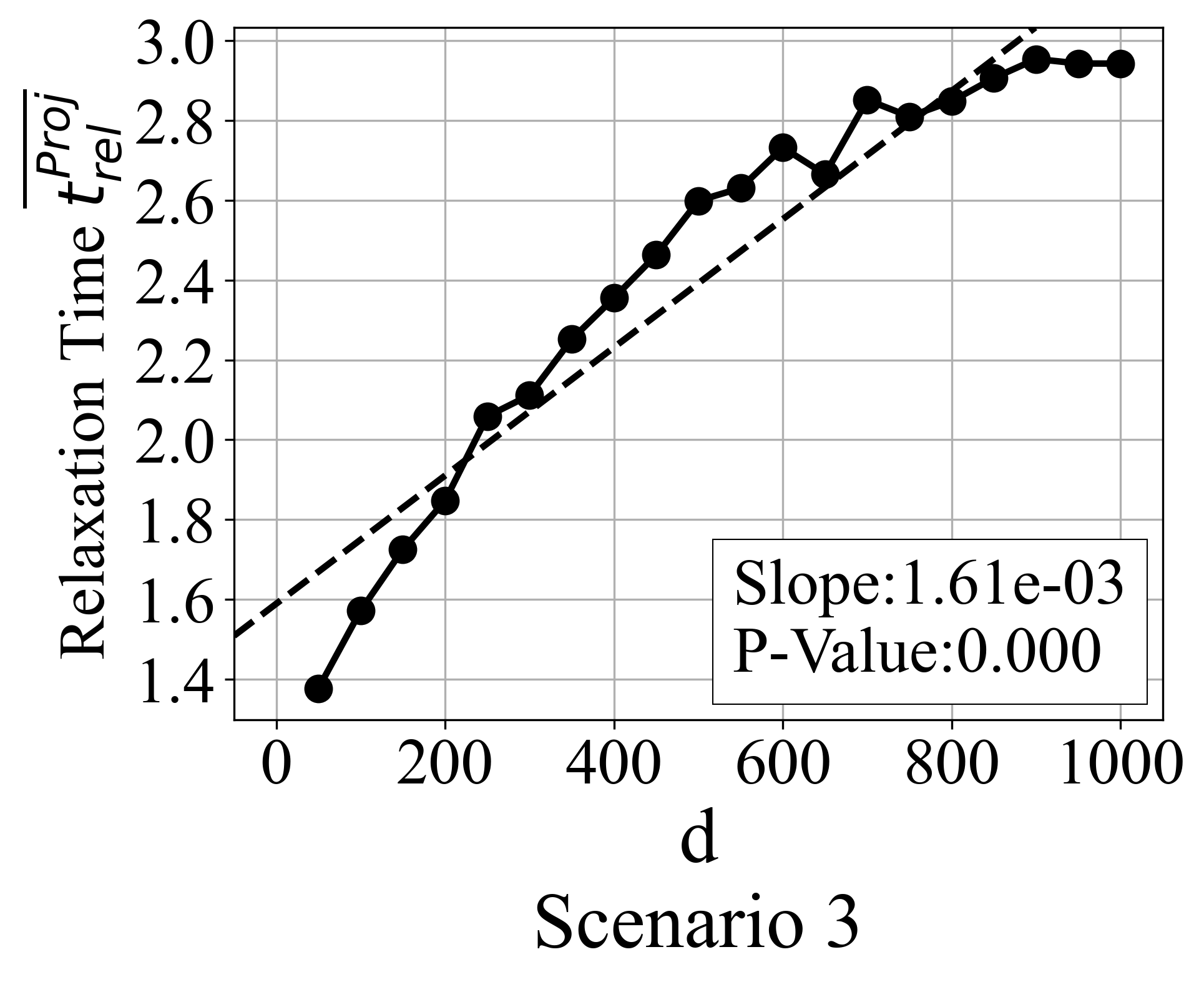}
    \caption{Simulation results for LassoDA for the $\beta$ coordinates.}
    \label{fig:lassobeta}
\end{figure} 

\begin{figure}[t]
    \centering
    \includegraphics[width=0.32\linewidth]{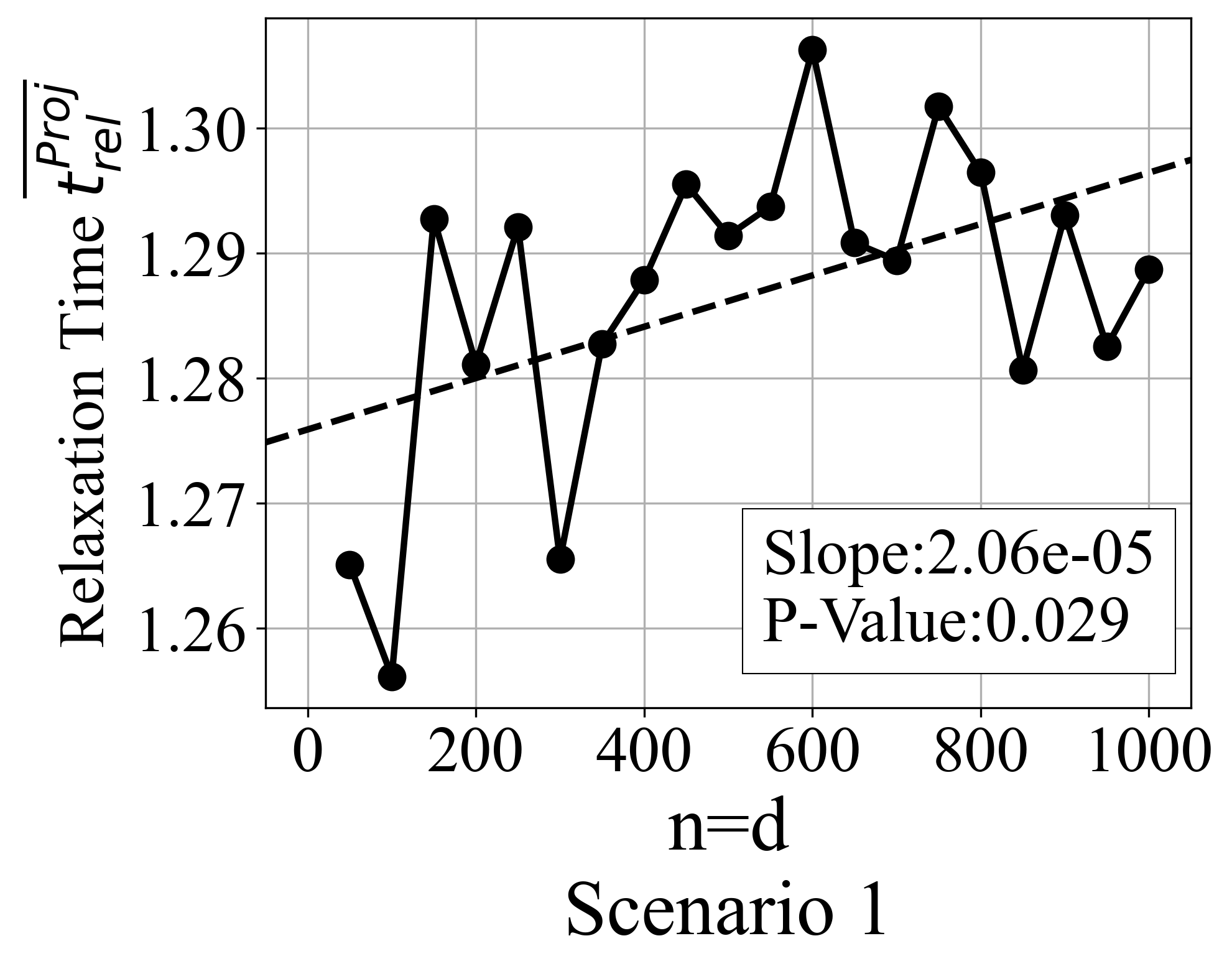}
    \includegraphics[width=0.32\linewidth]{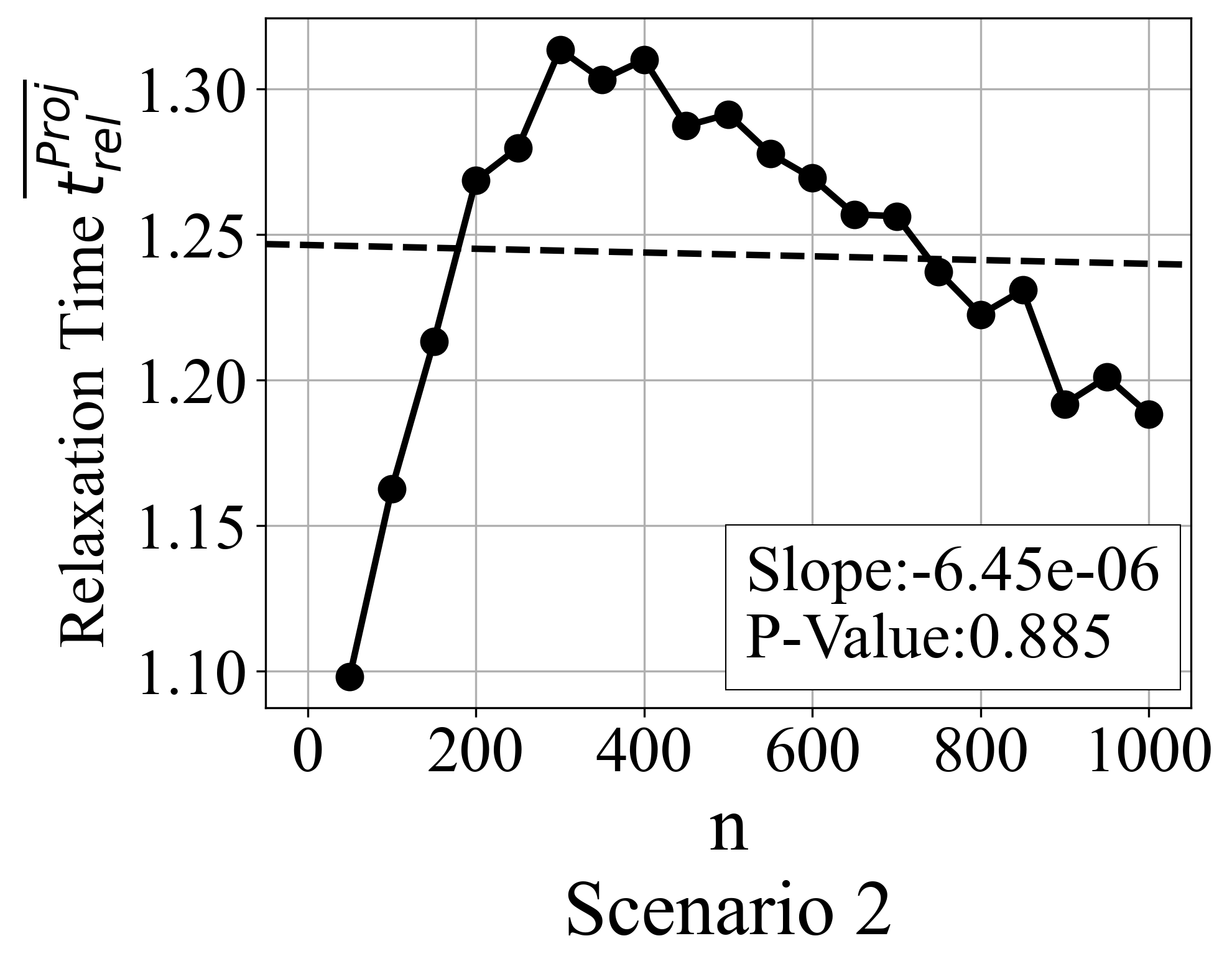}
    \includegraphics[width=0.32\linewidth]{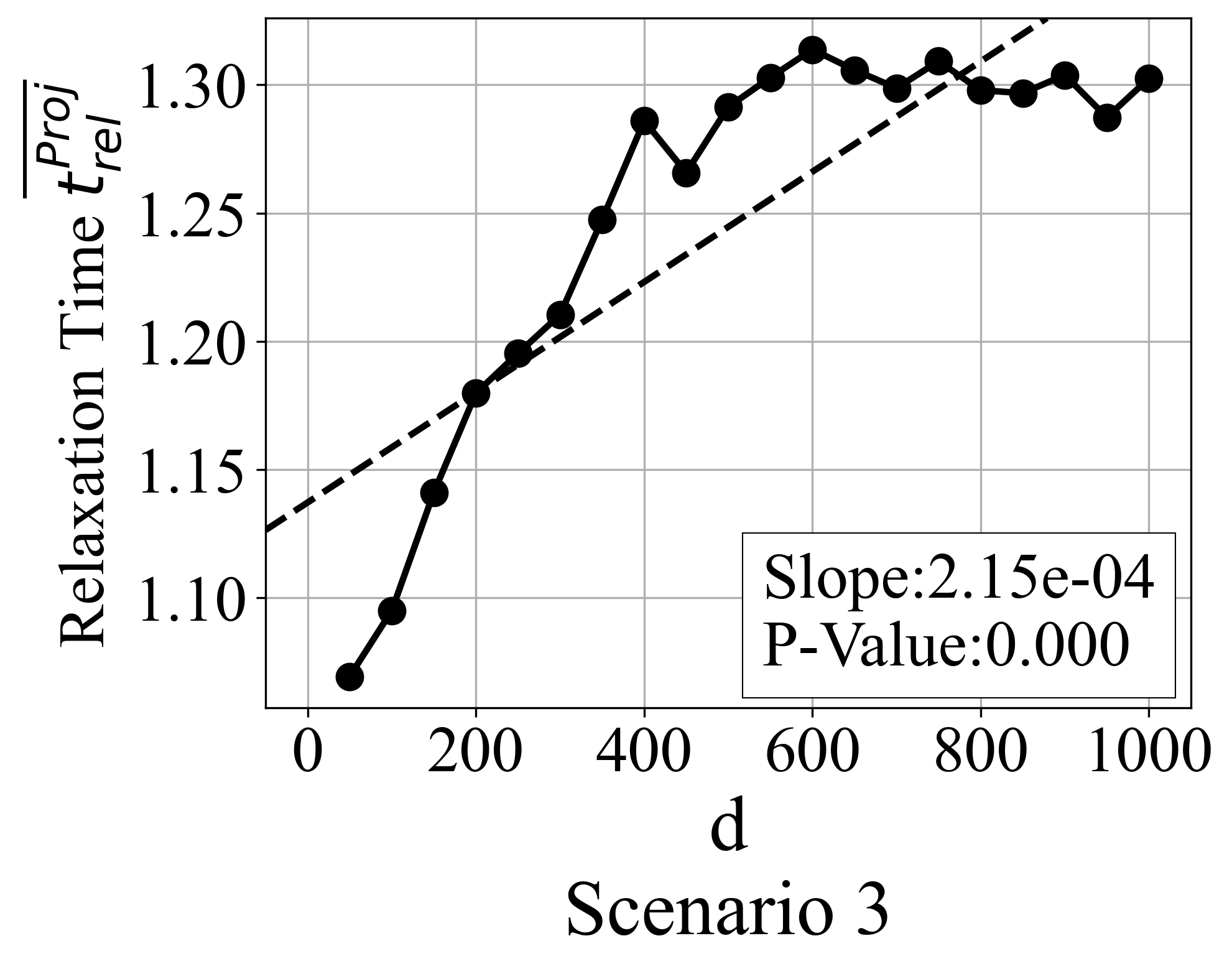}
    \caption{Simulation results for LassoDA for the $v$ coordinate. }
    \label{fig:lassov}
\end{figure}

\section{Deferred Proofs}
\subsection{Proof of Corollary \ref{t:main-cor}}\label{a:ind-proof}
We derive Corollary \ref{t:main-cor} by obtaining a high probability bound for $\|X\|^2_{\mathrm{op}} = \lambda_{\max}(X^TX)$ and then using the bound in Theorem \ref{t:probitlogit}. Suppose $x_{ij}$ is the $(ij)^{th}$ entry of $X$. We note that 
\begin{align*} X^TX = \begin{bmatrix}
        n & \sum_{i=1}^n \frac{x_{i2}}{\sqrt{d}}&\cdots& \sum_{i=1}^n \frac{x_{id}}{\sqrt{d}}\\ 
        \sum_{i=1}^n \frac{x_{i2}}{\sqrt{d}}& && \\
        \vdots & &\textbf{0} &\\
        \sum_{i=1}^n \frac{x_{id}}{\sqrt{d}} & & 
    \end{bmatrix} + \begin{bmatrix}
        0 & 0&\cdots& 0\\ 
        0& && \\
        \vdots & &\frac{1}{d}\tilde{X}^T\tilde{X} &\\
        0 & & 
    \end{bmatrix} 
\end{align*}  
By Weyl's inequality, \begin{align} \label{e:weyl-relation}
    \lambda_{\max}\pa{X^TX} \le n + \frac{1}{d}\lambda_{\max}\pa{\tilde{X}^T\tilde{X}}
\end{align}. Because $\{\tilde{x_i}\}_{i=1}^n$ are zero mean, $\tilde{X}^T\tilde{X}=\sum_{i=1}^n \tilde{x}_i\tilde{x}_i^T$ can be considered as $n$ multiple of the sample covariance matrix $\tilde{\Sigma}=\frac{1}{n}\sum_{i=1}^n \tilde{x}_i\tilde{x}_i^T$. 
Therefore, we can also draw upon a rich literature of high probability error bounds for covariance estimation that control $\norm{\tilde{\Sigma} - \Sigma}_{\mathrm{op}}$. We first cite the techniques, and then use them to prove Corollary \ref{t:main-cor}. 

\begin{lemma}[{Covariance Estimation for Sub-Gaussian Distributions, \cite[Exercise 4.7.3]{vershynin2018high}}] \label{l:sub-Gaussian-cov}  Let $X$ be a sub-gaussian random vector in $\mathbb{R}^d$. More precisely, assume that there exists $K \ge 1$ such that 
$$
\| \langle\,X,x \rangle \|_{\psi_2} \le K \sqrt{\mathbb E\langle\,X,x \rangle^2} \text{   for all $x \in \mathbb{R}^d$}.
$$
Then for all $u \ge 0$, 
$$\norm{ \tilde{\Sigma} - \Sigma }_{\mathrm{op}} \le c K^2 \left(\sqrt{\frac{d+u}{n}} + \frac{d+u}{n}\right) \| \Sigma \|_{\mathrm{op}}
$$
with probability at least $1-2e^{-u}$.
\end{lemma}
\begin{lemma}[Covariance Estimation for Log-concave Isotropic Measures, \cite{adamczak2010quantitative,adamczak2011sharp}] \label{l:logconcave-cov}Assume X is a log-concave isotropic random vector in $\mathbb{R}^d$. Then, there exists absolute constants $K$ and $\psi$ such that
\begin{enumerate}
    \item If $d \le n$, 
$$
\norm{\tilde{\Sigma}-\mathbb{I}_d}_{\mathrm{op}}  \le c (\psi + K)^2 \sqrt{\frac{d}{n}}
$$ 
\item 
If $d > n$,
$$
\norm{\tilde{\Sigma}-\mathbb{I}_d}_{\mathrm{op}} \le c (\psi + K)^2 \frac{d}{n}
$$
\end{enumerate}    
with probability at least $1-\exp(-c\sqrt{d})$.
\end{lemma}
\begin{proof}[Proof of Corollary \ref{t:main-cor}] We first prove a bound for $\|\tilde{X}^T\tilde{X}\|_{\mathrm{op}}$. 
\paragraph*{Under Sub-Gaussianity} By Lemma~\ref{l:sub-Gaussian-cov}, we have 
\begin{align} \label{e:sub-gauss-bound}
   \norm{\tilde{X}^T\tilde{X}}_{\mathrm{op}} \le n \| \Sigma\|_{\mathrm{op}} + n\norm{ \tilde{\Sigma} - \Sigma }_{\mathrm{op}} \le n \| \Sigma\|_{\mathrm{op}} + c n K^2 \pa{\sqrt{\frac{d+u}{n}} + \frac{d+u}{n}} \| \Sigma \|_{\mathrm{op}}. 
\end{align}
This holds with probability at least $1-2\exp(-u)$.

\paragraph*{Under Log-concavity}
We consider a general log-concave random vector $X$ with covariance $\Sigma$. Applying Lemma~\ref{l:logconcave-cov} to the isotropic random vector $\Sigma^{-1/2} X$, we have with probability at least $1-\exp(-c'\sqrt{d})$, 
$$
\norm{\Sigma^{-1/2}\tilde{\Sigma}\Sigma^{-1/2}-\mathbb{I}_d}_{\mathrm{op}}  \le c \pa{\sqrt{\frac{d}{n}} + \frac{d}{n}}
$$
By left- and right-multiplying both sides by $\|\Sigma^{1/2}\|_{\mathrm{op}}$, we have 
$$
\norm{\tilde{\Sigma}-\Sigma}_{\mathrm{op}} \le \norm{\Sigma^{1/2}}_{\mathrm{op}} \norm{\Sigma^{-1/2}\tilde{\Sigma}\Sigma^{-1/2}-\mathbb{I}_d}_{\mathrm{op}} \norm{\Sigma^{1/2}}_{\mathrm{op}} \le c\pa{\sqrt{\frac{d}{n}} + \frac{d}{n}} \norm{\Sigma}_{\mathrm{op}}.
$$ 
Then, we can get a high probability bound for $\|\tilde{X}^T\tilde{X}\|_{\mathrm{op}}$ such that with probability at least $1-\exp(-c'\sqrt{d})$, we have 
\begin{align} \label{e:log-concave-bound}
 \norm{ \tilde{X}^T\tilde{X}}_{\mathrm{op}} \le n \| \Sigma\|_{\mathrm{op}} + n\norm{ \tilde{\Sigma} - \Sigma }_{\mathrm{op}} \le n \| \Sigma\|_{\mathrm{op}} + cn\pa{\sqrt{\frac{d}{n}} + \frac{d}{n}} \|\Sigma\|_{\mathrm{op}}
\end{align}

We obtain the corollary by using the bounds \eqref{e:sub-gauss-bound} and \eqref{e:log-concave-bound} in \eqref{e:weyl-relation} and then applying Theorem 3.1. 
\end{proof}

\subsection{Proof of Lemma 4.3} 
\label{a:mtub-nolazy}
We use a different method from the proof in \cite{lovasz1993random}. This method is closely related to the proof of Lemma 3 in \cite{chen2020fast}, which considers the conductance profile instead of conductance. With an additional assumption of nonnegative spectrum, this method allows us to drop the laziness condition in the original statement in \cite{lovasz1993random}. 

We start by introducing some notations. Let $L^2(\pi)$ be the space of square integrable functions under function $\pi$ with inner product 
$$
\ip{f,g}_\pi = \int f g d\pi. 
$$ The expectation $\mathbb E_\pi: L^2(\pi) \rightarrow \mathbb{R}$ and the variance $\textup{Var}_\pi: L^2(\pi) \rightarrow \mathbb R$ with respect to the measure $\pi$ are given by
$$
\mathbb E_\pi (f) = \int f d \pi, \text{ and } \textup{Var}_\pi (f) = \int (f - E_\pi (f))^2  d \pi. 
$$
\begin{proof}[Proof of Lemma 4.3]
Suppose that $\mathcal{P}$ has spectral gap $\gamma$. That is, with $L^2_0(\pi) = \{\pi \in L^2(\pi): \mathbb E_\pi(f)=0\}$, 
$$
\gamma = \inf_{f \in L_0^2(\pi)} \frac{\ip{f,(\mathbb{I}-\mathcal{P})f}_\pi}{\ip{f,f}_\pi}. 
$$ Combining this with the Cheeger's inequality \cite{lawler1988bounds}, which states $\gamma \ge \frac{\Phi^2}{2}$, we obtain that for any $f \in L^2_0(\pi)$, 
\begin{align}  \label{e:eq1}
    \ip{f, (\mathbb{I}-\mathcal{P})f}_\pi \ge \frac{\Phi^2}{2} \ip{f,f}_\pi. 
\end{align}
The reversibility of $\mathcal{P}$ implies that $\mathbb{I} -\mathcal{P}, \mathbb{I} +\mathcal{P},$ and $(\mathbb{I} -\mathcal{P})^{1/2}$ all commute. Then, suppose that $\lambda_{\min} = \inf_{f \in L^2(\pi)} \frac{\ip{f, \mathcal{P}f}_\pi}{\ip{f,f}_\pi}$, we have 
\begin{align}
    \ip{f, \pa{\mathbb{I}-\mathcal{P}^2}f}_\pi
    &= \ip{f, (\mathbb{I}-\mathcal{P})^{1/2}
    (\mathbb{I}+\mathcal{P})
    (\mathbb{I}-\mathcal{P})^{1/2}f}_\pi
    = \ip{(\mathbb{I}-\mathcal{P})^{1/2} f, 
    (\mathbb{I}+\mathcal{P})
    (\mathbb{I}-\mathcal{P})^{1/2}f}_\pi \nonumber\\
    &\ge (1+\lambda_{\min}) \ip{(\mathbb{I}-\mathcal{P})^{1/2} f, (\mathbb{I}-\mathcal{P})^{1/2} f}_\pi \nonumber \\
    &= (1+\lambda_{\min}) \ip{f, (\mathbb{I}-\mathcal{P})f}_\pi. \label{e:eq2}
\end{align}
Let $r=\frac{(1+\lambda_{\min})\Phi^2}{2}$. Combining~\eqref{e:eq1} and~\eqref{e:eq2}, we obtain that 
\begin{align*}
    \ip{\mathcal{P}f, \mathcal{P}f}_\pi = \ip{f, \mathcal{P}^2f}_\pi \le  (1-r)\ip{f, f}_\pi.
\end{align*}
Taking $f=\frac{\nu}{\pi}-1$, we have 
\begin{align}\label{e:eq3}
    \textup{Var}_\pi \pa{\frac{\nu \mathcal{P}}{\pi}} \le (1-r) \textup{Var}_\pi \pa{\frac{\nu}{\pi}}.
\end{align}
Iterating~\eqref{e:eq3} gives 
\begin{align}
 \textup{Var}_\pi \pa{\frac{\nu \mathcal{P}^k}{\pi}} \le (1-r)^k \textup{Var}_\pi \pa{\frac{\nu}{\pi}}.   
\end{align}
Therefore, we have 
\begin{align*}
    \TV(\nu \mathcal{P}^k, \pi) &= \frac{1}{2} \int |\nu\mathcal{P}^k - \pi| = \frac{1}{2} \int \abs{\frac{\nu\mathcal{P}^k}{\pi} - 1}d\pi \\
    &\le_{(i)} \frac{1}{2} \sqrt{\textup{Var}_\pi\pa{\frac{\nu\mathcal{P}^k}{\pi}}} \\
    &\le_{(ii)} \frac{1}{2}\sqrt{(1-r)^k \textup{Var}_\pi\pa{\frac{\nu}{\pi}}} \\
    &\le_{(iii)} \frac{1}{2} \sqrt{(1-r)^k \eta} \le \frac{1}{2} \sqrt{\eta} e^{-rk} \\
    &\le_{(iv)} \frac{1}{2} \sqrt{\eta} e^{-\frac{\Phi^2}{2}k}, 
\end{align*}
where we obtain $(i)$ by Cauchy-Schwarz inequality, $(ii)$ by~\eqref{e:eq3}, $(iii)$ by $\textup{Var}_\pi(\frac{\nu}{\pi}) \le \int \pa{\frac{\nu}{\pi}}^2 d\pi \le \sup \frac{\nu}{\pi} \int d\nu \le \eta$, and $(iv)$ by $ \lambda_{\min} \ge 0$. 
\end{proof}

\subsection{Proof of Lemma 4.7} \label{a:mtub-nolazy-log}
Compared to the original statement of Lemma 3 in \cite{chen2020fast}, Lemma 4.7 drops the laziness assumption and adopts the additional condition of nonnegative spectrum. To justify this new statement, we need a one-line modification of the original proof in \cite{chen2020fast}. That is, we use a different way to lower bound the two-step Dirichlet form by the one-step Dirichlet form in equation (72)-(i) of \cite{chen2020fast}. 

Specifically, we let $L^2(\pi)$ be the space of square integrable functions under function $\pi$ with inner product $\ip{f,g}_\pi = \int f g d\pi$. The \textit{Dirichlet form} $\mathcal{E}:L^2(\pi) \times L^2(\pi) 
 \rightarrow \mathbb{R}$ associated with the transition kernel K is defined as 
$$
\mathcal{E}_K(f,g) = \frac{1}{2}\int (f(x)-g(y))^2 K(x, dy) \pi(x) dx.
$$ In the equation (70) and (72)-(i) of \cite{chen2020fast}, assuming the chain is $\zeta$-lazy,  they prove that for any $f \in  L^2(\pi)$
\begin{align}\label{e:eq4}
\mathcal{E}_{\mathcal{P}^2}(f,f) \ge 2\zeta \mathcal{E}_{\mathcal{P}}(f,f).
\end{align}

Instead of using laziness, by noting that $\mathcal{E}_K(f,f) = \ip{f, (\mathbb{I}-K)f}_\pi$, we can follow the same arguments in~\eqref{e:eq2} to obtain that 
\begin{align}\label{e:eq5}
\mathcal{E}_{\mathcal{P}^2}(f,f) \ge (1+\lambda_{\min}) \mathcal{E}_{\mathcal{P}}(f,f),
\end{align}
where $\lambda_{\min}$ is the minimum eigenvalue of $\mathcal{P}$. Replacing~\eqref{e:eq4} by~\eqref{e:eq5} in the proof for Lemma 3 in \cite{chen2020fast}, and keeping the rest unchanged, we yield that with $\eta$-warm start $\mu_0$,
$$
\tau_2(\mu_0, \epsilon) \le \int_{4/\eta}^{8/\epsilon^2} \frac{16 d v}{(1+\lambda_{\min})v \tilde{\Phi}^2(v)},
$$
where $\tau_2(\eta, \epsilon)$ is defined as the $\mathcal{L}_2$-mixing time in \cite{chen2020fast},
$$
\tau_2(\mu_0, \epsilon) = \inf \{k \in \mathbb{N}: d_2(\mu_0\mathcal{P}^k, \pi)\le \epsilon\} \text{ where } d_2(\mu, \nu) = \sqrt{\int_{\mathbb{R}^d}\pa{\frac{\mu(x)}{\nu(x)}-1}^2 \nu(x) dx}.
$$
By $\chi^2(\mu||\nu) = d_2^2(\mu, \nu) $, we have
$$
t^{\chi^2}_{\Psi}(\eta, \epsilon) = \sup_{\mu_0 \text{ is a $\eta$-warm start}}\tau_2(\mu_0,  \sqrt{\epsilon}) \le \int_{4/\eta}^{8/\epsilon} \frac{16 d v}{(1+\lambda_{\min})v \tilde{\Phi}^2(v)}.
$$
Due to nonnegative spectrum,  $\lambda_{\min} \ge 0$, and thus
$$
t^{\chi^2}_{\Psi}(\eta, \epsilon) \le \int_{4/\eta}^{8/\epsilon} \frac{16 d v}{v \tilde{\Phi}^2(v)}.
$$
The lemma follows.

\subsection{Proof of Lemma 4.8}\label{a:proof-transform}
\begin{proof}[Proof of Lemma 4.8]
Suppose the Markov chain $\Psi$ has an associated triple $(\nu, \mathcal{P}, \pi)$. Then, its $T$-transformed Markov chain has the triple $(\nu_T, \mathcal{P}_T, \pi_T)$. 
First, note that
\[
\mu \mathcal P_T = T_\# (((T^{-1})_\#\mu)\mathcal P).
\]
In particular, for $\mu=\nu_T$, 
$\nu_T \mathcal P_T = T_\# (\nu \mathcal P)$. Iterating this and putting $\mu = T_\# \nu$ gives $\nu_T \mathcal P_T^k = T_\# (\nu \mathcal P^k)$. 

By the invariance of TV distance under one-to-one transformation, we have for any $k \in \mathbb{N}^{+}$ that
$$\TV(\nu \mathcal{P}^k,\pi)=\TV(\nu_T \mathcal{P}^{k}_T,\pi_T).$$
Furthermore, $T$ being a bijection implies that $\nu_T$ is also an $\eta$-warm start, and thus the lemma follows.
\end{proof}
\subsection{Proof of Lemma 4.9}\label{s:proof-lasso-iso}
\begin{proof}[Proof of Lemma 4.9]
The target of the $\varphi$-marginal of the $T$-transformed 
target distribution of LassoDA 
\begin{align} 
\pi_{T_\varphi}(\varphi)=\pi_T(\varphi|y) \propto  \exp\{- \lambda \|\varphi\|_1\} \int_{\R^+} \rho^{n+2\alpha -2} \exp\left\{-\frac{1}{2}\|\rho y - X\varphi\|_2^2 -\rho^2 \xi\right\} d\rho.
\end{align} 
is in general weakly-log-concave. We use Lemma 4.2 to relate the Cheeger constant of the target of LassoDA to the known Cheeger constant of the double exponential distribution (Lemma 4.1(1)). Let $\mu(\varphi)=(\frac{\lambda}{2})^d e^{- \lambda \|\varphi\|_1}$ be the reference double exponential distribution. To utilize Lemma 4.2, we need to measure the infinity-divergence between $\pi_{T_\varphi}$ and $\mu$ (the $L^{\infty}$ norm of their ratio):
\begin{align}
\left\| \frac{d \pi_{T_\varphi}}{d \mu}\right\|_{L^{\infty}} &= \max_{\varphi} \frac{ e^{- \lambda \|\varphi\|_1}\int_{\rho\in \R^+} \rho^{n+2\alpha -2} e^{-\rho^2 \xi -\frac{1}{2}\|\rho y - X\varphi\|_2^2 } d\rho }{\int_{\rho\in \R^+} \rho^{n+2\alpha -2} e^{-\rho^2 \xi} \int_{\varphi\in \R^d} e^{- \lambda \|\varphi\|_1-\frac{1}{2}\|\rho y - X\varphi\|_2^2 } d\varphi d\rho } \frac{(2/\lambda)^d}{ e^{- \lambda \|\varphi\|_1}} \nonumber\\
&= (2/\lambda)^d \max_{\varphi} \frac{ \int_{\rho\in \R^+}  e^{ -\frac{1}{2}\|\rho y - X\varphi\|_2^2} \rho^{n+2\alpha -2} e^{-\rho^2 \xi } d\rho }{\int_{\rho\in \R^+} \rho^{n+2\alpha -2} e^{-\rho^2 \xi} \int_{\varphi\in \R^d} e^{- \lambda \|\varphi\|_1-\frac{1}{2}\|\rho y - X\varphi\|_2^2 } d\varphi d\rho } \nonumber \\
&\le (2/\lambda)^d  \frac{ \int_{\rho\in \R^+}   \rho^{n+2\alpha -2} e^{-\rho^2 \xi } d\rho }{\int_{\rho\in \R^+} \rho^{n+2\alpha -2} e^{-\rho^2 \xi} \int_{\varphi\in \R^d} e^{- \lambda \|\varphi\|_1-\frac{1}{2}\|\rho y - X\varphi\|_2^2 } d\varphi d\rho } \label{e:lasso-important-quantity}\\
&=  \frac{ (2/\lambda)^d \frac{1}{2}\Gamma(\frac{n+2\alpha-1}{2}) \xi^{-\frac{n+2\alpha-1}{2}} }{\int_{\rho\in \R^+} \rho^{n+2\alpha -2} e^{-\rho^2 \xi} \int_{\varphi\in \R^d} e^{- \lambda \|\varphi\|_1-\frac{1}{2}\|\rho y - X\varphi\|_2^2 } d\varphi d\rho }. \nonumber
\end{align}
It remains to lower bound the partition function in the denominator. Since $\|\varphi\|_1 = \sum_{j=1}^d |\varphi_j| \le \sum_{j=1}^d (\varphi_j^2+1)= d + \|\varphi\|_2^2$, we have 
\begin{align*}
&\int_{\rho\in \R^+} \rho^{n+2\alpha -2} e^{-\rho^2 \xi} \int_{\varphi\in \R^d} e^{- \lambda \|\varphi\|_1-\frac{1}{2}\|\rho y - X\varphi\|_2^2} d\varphi d\rho \\
&\ge  e^{-\lambda d}\int_{\rho\in \R^+} \rho^{n+2\alpha -2} e^{-\rho^2 \xi} \int_{\varphi\in \R^d} e^{- \lambda \|\varphi\|_2^2-\frac{1}{2}\|\rho y - X\varphi\|_2^2 } d\varphi d\rho \\
&= e^{-\lambda d}\int_{\rho\in \R^+} \rho^{n+2\alpha -2} e^{-\rho^2 \xi} \int_{\varphi\in \R^d} e^{-\frac{1}{2}(\varphi^T(X^TX+2\lambda \mathbb{I})\varphi -2\rho y^T X \varphi + \rho^2 y^Ty)} d\varphi d\rho \\
&= e^{-\lambda d} \int_{\rho\in \R^+} \rho^{n+2\alpha -2} e^{-\rho^2 \xi-\frac{1}{2}\rho^2 y^T(\mathbb{I}-X(X^TX+2\lambda \mathbb{I})^{-1}X^T)y}\\
&\quad \cdot \int_{\varphi\in \R^d} e^{-\frac{1}{2}(\varphi-\rho(X^TX+2\lambda\mathbb{I})^{-1} X^T y)^T(X^TX+2\lambda\mathbb{I})(\varphi-\rho(X^TX+2\lambda\mathbb{I})^{-1} X^T y)} d\varphi d\rho \\
&= e^{-\lambda d} (2\pi)^{d/2} |(X^TX+2\lambda \mathbb{I})^{-1}|^{1/2} \int_{\rho\in \R^+} \rho^{n+2\alpha -2} e^{-\rho^2 \xi-\frac{1}{2}\rho^2 y^T(\mathbb{I}-X(X^TX+2\lambda \mathbb{I})^{-1}X^T)y} d\rho \\
&= \frac{1}{2}e^{-\lambda d} (2\pi)^{d/2} |(X^TX+2\lambda \mathbb{I})^{-1}|^{1/2} \int_\gamma \gamma^{\frac{n+2\alpha -3}{2}} e^{-\gamma (\xi+\frac{1}{2} y^T(\mathbb{I}-X(X^TX+2\lambda \mathbb{I})^{-1}X^T)y)} d\gamma \; \pa{\gamma=\rho^2}\\
&=\frac{1}{2}e^{-\lambda d} (2\pi)^{d/2} |(X^TX+2\lambda \mathbb{I})^{-1}|^{1/2}  \Gamma\left(\frac{n+2\alpha-1}{2}\right)  \\
& \cdot \pa{\xi+\frac{1}{2} y^T(\mathbb{I}-X(X^TX+2\lambda \mathbb{I})^{-1}X^T)y}^{-\frac{n+2\alpha-1}{2}} .
\end{align*}
Therefore, 
\begin{align} \label{e:distance-upperbound}
   \left\|\frac{d\pi_{T_\varphi}}{d\mu}\right\|_{L^{\infty}} &\le  \underbrace{e^{\lambda d} \pa{\frac{\sqrt{2}}{\lambda \sqrt{\pi}}}^d}_{(a)} \underbrace{|X^TX+2\lambda \mathbb{I}|^{1/2}}_{(b)} \underbrace{\pa{\frac{\xi+\frac{1}{2} y^T(\mathbb{I}-X(X^TX+2\lambda \mathbb{I})^{-1}X^T)y}{\xi}}^{\frac{n+2\alpha-1}{2}}}_{(c)}.
\end{align} 
Next, we analyze the dependency on $n$ and $d$ of the logarithm of the three parts in~\eqref{e:distance-upperbound}. 
\paragraph*{Part (a)}
\begin{align*}
    \log e^{\lambda d} \pa{\frac{\sqrt{2}}{\lambda \sqrt{\pi}}}^d = \lambda d + d \log \pa{\frac{\sqrt{2}}{\lambda \sqrt{\pi}}}=\mathcal{O}(d).
\end{align*} 
\paragraph*{Part (b)}  Suppose $ \norm{X}_{\mathrm{op}}^2 = \lambda_d \ge \cdots \ge \lambda_1 \ge 0$ are the eigenvalues of $X^TX$. Then, we have 
\begin{align*}
    \log |X^TX+2\lambda \mathbb{I}| = \log \prod_{i=1}^d(\lambda_i+2\lambda) \le \log \prod_{i=1}^d(\lambda_d+2\lambda)=d \log (\norm{X}_{\mathrm{op}}^2 +2\lambda) =_{(i)}\mathcal{O}(d\log nd)  
\end{align*} 
where in (i) we use the assumption $\|X\|_{\mathrm{op}}=\Poly(nd)$.
\paragraph*{Part (c)} We first notice that $$y^T(\mathbb{I}-X(X^TX+2\lambda \mathbb{I})^{-1}X^T)y 
\le \|y\|_2^2 (1-\lambda_{\min}(X(X^TX+2\lambda \mathbb{I})^{-1}X^T)) \le \|y\|_2^2, $$
where the last inequality comes from the fact that $X(X^TX+2\lambda \mathbb{I})^{-1}X^T$ is positive semi definite. Then, using the assumption $\|y\|_2=\Poly(n)$,
\begin{align*}
    \log \pa{\frac{\xi+\frac{1}{2} y^T(\mathbb{I}-X(X^TX+2\lambda \mathbb{I})^{-1}X^T)y}{\xi}}^{\frac{n+2\alpha-1}{2}} \le \log \pa{\frac{\xi+\frac{1}{2} \|y\|_2^2}{\xi}}^{\frac{n+2\alpha-1}{2}} =\mathcal{O}(n\log n).
\end{align*}
Putting the three parts together, we get that 
\begin{align*}
    \log\left\|\frac{d\pi_{T_\varphi}}{d\mu}\right\|_{L^{\infty}} = \mathcal{O}(d\log d + n\log n).
\end{align*}
Applying Lemma 4.2 and Lemma 4.1(1), the Cheeger constant of $\pi_{T_\varphi}$ satisfies  
\begin{align*}
    \Ch(\pi_{T_\varphi}) \le c(d\log d + n\log n) \Ch(\mu) = \mathcal{O}(d\log d + n\log n).
\end{align*}
\end{proof}

\subsection{Proof of Lemma 4.11}\label{s:ig-proof}
\begin{proof}[Proof of Lemma 4.11]
WLOG, we assume that $\mu_2>\mu_1$. Let $f_i(x),F_i(x)$ be the pdf and the cdf of $\IG(\mu_i,\lambda)$ at $x$, respectively. By standard formulae, 
\begin{align*}
    f_i(x)&=\sqrt{\frac{\lambda}{2\pi x^3}}\exp\pa{-\frac{\lambda(x-\mu_i)^2}{2\mu_i^2x}} \\
    F_i(x)&=\Phi\pa{\sqrt{\frac{\lambda}{x}}\pa{\frac{x}{\mu_i}-1}}+\exp\pa{\frac{2\lambda}{\mu_i}}\Phi \pa{-\sqrt{\frac{\lambda}{x}}\pa{\frac{x}{\mu_i}+1}}.
\end{align*}
Solving for $f_1(x)=f_2(x)$, we get a unique solution $x^*=\frac{2\mu_1\mu_2}{\mu_1+\mu_2}$. Therefore, 
\begin{align*}
    \TV(\IG(\mu_1, \lambda), \IG(\mu_2, \lambda)) &=\int_0^{x^*} f_1(x)-f_2(x)dx =F_1(x^*)-F_2(x^*).
\end{align*}
Then, we consider the limiting distribution as $\mu \rightarrow \infty$. Letting the pdf and cdf of the limiting distribution be $f_\infty(x), F_\infty(x)$, respectively, we have 
\begin{align*}
    f_\infty(x)&=\sqrt{\frac{\lambda}{2\pi x^3}}\exp\pa{-\frac{\lambda}{2x}}, \\
    F_\infty(x)&=2\Phi(-\sqrt{\lambda/x}).
\end{align*}
We denote the error function as $\Erf(x)=\frac{2}{\sqrt{\pi}}\int_0^x e^{-t^2} dt$. We have 
\begin{align*}
\TV(\IG(\mu_i, \lambda), \IG(\infty, \lambda)) 
        &\le \int_{2\mu_i}^\infty f_\infty (x)-f_{\mu_i}(x) dx \le \int_{2\mu_i}^\infty f_\infty (x) dx = 1-2\Phi\pa{-\sqrt{\frac{\lambda}{2\mu_i}}} \\
    &=_{(i)}-\Erf\pa{-\frac{\sqrt{\lambda}}{2\sqrt{\mu_i}}} =_{(ii)}\Erf\pa{\frac{\sqrt{\lambda}}{2\sqrt{\mu_i}}} \le_{(iii)} \sqrt{\frac{\lambda}{\pi \mu_i}} ,
\end{align*}
where (i) is due to $\Phi(x)=\frac{1}{2}(1+\Erf(x/\sqrt{2}))$, (ii) is because that the error function is odd, and (iii) comes from the fact that  $\Erf^{\prime}(x) = \frac{2}{\sqrt{\pi}} e^{-x^2}$.
Hence, 
\begin{align*}
    \TV(\IG(\mu_1, \lambda), \IG(\mu_2, \lambda)) & \le \TV(\IG(\mu_1, \lambda), \IG(\infty, \lambda)) + \TV(\IG(\mu_2, \lambda), \IG(\infty, \lambda)) \\
    &\le \sqrt{\frac{\lambda}{\pi \mu_1}} + \sqrt{\frac{\lambda}{\pi \mu_2}} \le  2\sqrt{\frac{\lambda}{\pi \min\{\mu_1, \mu_2\}}} .
\end{align*}
\end{proof}


\subsection{Proof of Lemma~\ref{l:lasso-feasible}} \label{a:lasso-feasible-proof}
\begin{proof}[Proof of Lemma~\ref{l:lasso-feasible}]
Since the map $T$ is a bijection, we have 
$$
\sup_{A} \frac{\nu_\dagger(A)}{\pi^{\Lasso}(A)} = \sup_{A} \frac{T_{\#}\nu_\dagger(A)}{T_{\#}\pi^{\Lasso}(A)} = \sup_{A} \frac{\nu^\prime_\dagger(A)}{T_{\#}\pi^{\Lasso}(A)},
$$
where the supremum is taken over all measurable sets $A \subseteq \mathbb{R}^d$. By equation (30), we have that 
\begin{align*}
   T_{\#}\pi^{\Lasso}(A) &\propto \rho^{n+2\alpha -2} \exp\pa{-\frac{1}{2}\|\rho y - X\varphi\|_2^2 - \lambda \|\varphi\|_1-\rho^2 \xi}  \\
   &\ge \rho^{n+2\alpha -2} \exp\pa{-\frac{1}{2}\|\rho y - X\varphi\|_2^2 - \lambda \|\varphi\|^2_2-\rho^2 \xi-\lambda d} , 
\end{align*}
where the inequality is due to $\|\varphi\|_1 \le \|\varphi\|_2^2+d$. Therefore, 
\begin{align*}
\sup_{A} \frac{\nu^\prime_\dagger(A)}{T_{\#}\pi^{\Lasso}(A)} & \le e^{\lambda d}\frac{\int_{\rho\in \R^+} \int_{\varphi\in \R^d} \rho^{n+2\alpha -2} \exp\pa{-\frac{1}{2}\|\rho y - X\varphi\|_2^2 - \lambda \|\varphi\|_1-\rho^2 \xi} d \rho d\varphi} {\int_{\rho\in \R^+} \int_{\varphi\in \R^d} \rho^{n+2\alpha -2} \exp\pa{-\frac{1}{2}\|\rho y - X\varphi\|_2^2 - \lambda \|\varphi\|^2_2-\rho^2 \xi} d \rho d \varphi} \\
&\le e^{\lambda d}\frac{\int_{\rho\in \R^+} \int_{\varphi\in \R^d} \rho^{n+2\alpha -2} \exp\pa{- \lambda \|\varphi\|_1-\rho^2 \xi} d \rho d\varphi} {\int_{\rho\in \R^+} \int_{\varphi\in \R^d} \rho^{n+2\alpha -2} \exp\pa{-\frac{1}{2}\|\rho y - X\varphi\|_2^2 - \lambda \|\varphi\|^2_2-\rho^2 \xi} d \rho d \varphi} \\
&= e^{\lambda d}\frac{\int_{\varphi\in \R^d} e^{-\lambda \|\varphi\|_1} d\varphi \int_{\rho\in \R^+}  \rho^{n+2\alpha -2} \exp\pa{-\rho^2 \xi } d\rho}{\int_{\rho\in \R^+} \int_{\varphi\in \R^d} \rho^{n+2\alpha -2} \exp\pa{-\frac{1}{2}\|\rho y - X\varphi\|_2^2 - \lambda \|\varphi\|^2_2-\rho^2 \xi} d \rho d \varphi} \\
&=e^{\lambda d}\frac{(2/\lambda)^d \int_{\rho\in \R^+}  \rho^{n+2\alpha -2} \exp\pa{-\rho^2 \xi } d\rho}{\int_{\rho\in \R^+} \int_{\varphi\in \R^d} \rho^{n+2\alpha -2} \exp\pa{-\frac{1}{2}\|\rho y - X\varphi\|_2^2 - \lambda \|\varphi\|^2_2-\rho^2 \xi} d \rho d \varphi}.
\end{align*}
The fraction is the same quantity as in~\eqref{e:lasso-important-quantity}. Following the same derivation as in Section~\ref{s:proof-lasso-iso}, we can obtain 
$$
\sup_{A} \frac{\nu^\prime_\dagger(A)}{T_{\#}\pi^{\Lasso}(A)} \le e^{\lambda d}e^{\mathcal{O}(d\log d + n\log n)}=e^{\mathcal{O}(d\log d + n\log n)}  .
$$
\end{proof}

\section{Auxiliary Proofs}
The proofs in this Appendix are not new. We present them here to make the paper self-contained.

\subsection{Proof of Lemma 4.2} \label{a:proof-transfer}
The complete proof is dispersed in a series of papers \cite{milman2008isoperimetric, milman2010isoperimetric, milman2012properties}, where the authors consider distributions satisfying a general class of isoperimetric inequalities and a general convexity condition on manifolds. For simplicity, we present the proof restricted to the log-concave measures on $\mathbb{R}^d$ satisfying the Cheeger-type isoperimetric inequality. 

The proof utilizes the equivalence between Cheeger-type isoperimetric inequality and a type of concentration inequality for log-concave measures. Specifically, a probability measure $\mu$ on $\mathbb{R}^d$ is said to satisfy the \textit{concentration inequality} with log-concentration profile $\alpha:\mathbb{R}^{+} \rightarrow \mathbb{R}$, if for any Borel set $A \subseteq \mathbb{R}^d$ with $\mu(A)\ge \frac{1}{2}$, we have 
$$
1-\mu(A^r) \le \exp\{-\alpha(r)\} \quad \forall r \ge 0.
$$

We first introduce the concepts and a lemma that will be used in the proof. Given a probability measure on $\mathbb{R}^d$, the \textit{isoperimetric profile} is a pointwise maximal function $\mathcal{I}: [0,1] \rightarrow \mathbb{R}^{+}$, so that $\mu^{+}(A) \ge \mathcal{I}(\mu(A))$ for all Borel sets $A \subseteq \mathbb{R}^d$. The \textit{isoperimetric minimizer} for a measure $v \in (0,1)$ is a Borel set $A \in \mathbb{R}^d$ satisfying $\mu(A)=v$ and $\mu^{+}(A)=\mathcal{I}(v)$. Furthermore, we denote the \textit{$\mu$-total curvature} of an isoperimetric minimizer $A$ as $H_{\mu}(A)$. The definition of $\mu$-total curvature is not important in this proof. We use it only in the following lemma. We refer readers interested in this quantity to Section 2.3 of \cite{milman2010isoperimetric}. 
\begin{lemma}[{\cite[Theorem 2, Remark 3]{morgan2005manifolds} and \cite[Theorem 2.3]{milman2010isoperimetric}}] \label{l:curvature} Let $A \subseteq \mathbb{R}^d$ be an isoperimetric minimizer for a given measure $v \in (0,1)$. Then, for any $r \ge 0$, 
$$
\mu(A^r) - \mu(A) \le \mu^{+}(A) \int_0^r \exp\{H_\mu(A)t\} dt.
$$ 
\end{lemma}

\begin{lemma}[{\cite[Corollary 3.3]{milman2008isoperimetric}}]\label{l:curvature_ub}Let $A \subseteq \mathbb{R}^d$ be an isoperimetric minimizer for a given measure $v \in (0,1)$. Then, 
$$
H_{\mu}(A) \le \frac{\mathcal{I}(v)}{v}.
$$
\end{lemma}
\begin{proof}[Proof of Lemma~\ref{l:curvature_ub}]
By Lemma~\ref{l:curvature}, 
$$
1-\mu(A^c) \le \mu^{+}(A^c) \int_0^\infty \exp\{H_\mu(A^c)t\}dt.
$$
Since $\mu^{+}(A)=\mu^{+}(A^c)$ and $H_\mu(A)= - H_\mu(A^c)$, we have 
$$
\frac{\mu(A)}{\mu^{+}(A)} \le \int_0^\infty \exp\{-H_{\mu}(A) t\} dt .
$$
If $H_{\mu}(A) \ge 0$, this implies 
$$
H_{\mu}(A) \le \frac{\mu^{+}(A)}{\mu(A)} = \frac{\mathcal{I}(v)}{v}.
$$
Otherwise, the statement trivially holds. 
\end{proof}

\begin{proof}[Proof of Lemma 4.2] At a high level, the proof is structured as three steps. First, we translate the isoperimetric inequality of $\mu_1$ into a concentration inequality. Second, using the condition that $\|\frac{d\mu_2}{d\mu_1}\|_{L^\infty} \le \exp(D)$, we transfer the concentration inequality for $\mu_1$ into a concentration inequality of $\mu_2$. One can see the transference between concentration inequalities is straightforward. Finally, we translate the concentration inequality of $\mu_2$ into its isoperimetric inequality. 
\paragraph*{Step 1: Isoperimetrc inequality for $\mu_1$ $\implies$ Concentration inequality for $\mu_1$ \cite[Proposition 1.7]{milman2008isoperimetric}}  Consider any Borel set $B \subseteq \mathbb{R}^d $ with measure $\mu_1(B) \ge \frac{1}{2}$. Define $f(r)=-\log(1-\mu_1(B^r))$. We have
\begin{align*}
    \frac{df}{dr}&= - \frac{1}{1-\mu_1(B^r)}(-\mu_1^{+}(B^r)) \\&\ge_{(i)} \frac{1}{1-\mu_1(B^r)} \frac{1}{\Ch(\mu_1)} \min\{\mu_1(B^r), \mu_1((B^r)^c)\} =_{(ii)} \frac{1}{\Ch(\mu_1)},
\end{align*}
where (i) is by the Cheeger-type isoperimetric inequality for $\mu_1$ and (ii) is by $\mu_1(B^r) \ge \mu_1(B) \ge \frac{1}{2}$. Then, 
$$
f(r) \ge f(0) + \int_0^r \frac{1}{\Ch(\mu_1)} dt = 
-\log(1-\mu_1(B)) + \frac{r}{\Ch(\mu_1)} \ge \log2 + \frac{r}{\Ch(\mu_1)},
$$
which is equivalent to the following concentration inequality 
\begin{align} \label{e:concentrate_1_a}
    \mu_1(B) \ge \frac{1}{2} \implies 1-\mu_1(B^r) \le \exp\bc{-\pa{\log2 +\frac{r}{\Ch(\mu_1)}}}.
\end{align}

\paragraph*{Step 2: Concentration inequality for $\mu_1$ $\implies$ Concentration inequality for $\mu_2$ \cite[Lemma 3.1]{milman2012properties}} The concentration inequality for $\mu_1$ in equation~\eqref{e:concentrate_1_a} is equivalent to its contrapositive: considering $A=(B^r)^c$, we have 
\begin{align} \label{e:concentate_1_b}
    \mu_1(A) > \exp\bc{-\pa{\log2 +\frac{r}{\Ch(\mu_1)}}} \implies  \mu_1(A^r) > \frac{1}{2}. 
\end{align}
To obtain a concentration inequality for $\mu_2$, consider any Borel set $S \subseteq \mathbb{R}^d$ with measure $\mu_2(S) \ge \frac{1}{2}$. We need to construct a related set with $\mu_1$ measure greater than $\frac{1}{2}$ to invoke the concentration inequality for $\mu_1$. Since $\|\frac{d\mu_2}{d\mu_1}\|_{L^{\infty}} \ge \exp(D)$ , $\mu_1(S) \ge \mu_2(S) (\|\frac{d\mu_2}{d\mu_1}\|_{L^{\infty}})^{-1} \ge \exp\{-(\log2 + D)\}$. By equation~\eqref{e:concentate_1_b}, for any $r>\frac{D}{\Ch(\mu_1)}$, $\mu_1(S^r)>\frac12$, therefore,
$$
\mu_1(\overline{S^{r_1}}) \ge \frac{1}{2}, \quad\text{for }  r_1 = D \Ch(\mu_1),
$$
where $\overline{S^{r_1}}$ is the closure of $S^{r_1}$.
By the concentration inequality for $\mu_1$ in equation~\eqref{e:concentrate_1_a}, 
$$
1-\mu_1(\overline{S^{r_1+r}}) \le \exp\bc{-\pa{\log2 +\frac{r}{\Ch(\mu_1)}}}.
$$
Again, by $\|\frac{\mu_2}{\mu_1}\|_{L^{\infty}} \ge \exp(D)$, $1-\mu_1(S^{r_1+r})=\mu_1(\mathbb{R}^d \setminus S^{r_1+r}) \ge \mu_2(\mathbb{R}^d \setminus S^{r_1+r})\exp(-D)$. Therefore, we obtain a concentration inequality for $\mu_2$: for any Borel set $ A\subseteq \mathbb{R}^d, \mu_2(A) \ge \frac{1}{2}$, we have
\begin{align*}
  1-\mu_2(S^{r_1+r}) \le \exp\bc{-\pa{\log2 +\frac{r}{\Ch(\mu_1)}-D}} \quad  \text{for }r_1 = D \Ch(\mu_1) . 
\end{align*}
This can be written in the standard form 
\begin{align} \label{e:concentrate_2_a}
  \mu_2(S) \ge \frac{1}{2} \implies 1-\mu_2(S^r) \le \exp\{-\alpha_2(r)\} ,  
\end{align}
where 
\vspace{-3em}
\begin{center}\[ \alpha_2(r) =  \left\{
\begin{array}{ll}
      \log2 & r \le 2D\Ch(\mu_1) \\
      \log2 + \frac{r}{\Ch(\mu_1)} - 2D & r \ge 2D\Ch(\mu_1) \\
\end{array} 
\right. \]
\end{center}
\paragraph*{Step 3: Concentration inequality for $\mu_2$ $\implies$ Isoperimetric inequality for $\mu_2$ \cite[Theorem 1.1 and Corollary 3.4]{milman2010isoperimetric}} Given an isoperimetric minimizer $A$ of measure $v \in (0, \frac{1}{2})$, we define $r_v= \alpha_2^{-1}(\log(\frac{1}{v})) = \Ch(\mu_1)( \log(\frac{1}{v})+2D-\log2) > 2D\Ch(\mu_1)$. By the contrapositive of equation~\eqref{e:concentrate_2_a}, 
$$
\mu_2(S) > \exp\{-\alpha_2(r)\} \implies \mu_2(S^r) > \frac{1}{2},
$$
so we have $\mu_2(\overline{A^{r_v}}) \ge \frac{1}{2}$. Applying Lemma~\ref{l:curvature}, 
$$
\frac{1}{2} - v \le \mu_2(\overline{A_{r_v}}) - \mu_2(A) \le \mu_2^{+}(A) \int_0^{r_v} \exp\{H_{\mu_2}(A)t\}dt.
$$
Using Lemma~\ref{l:curvature_ub}, \edit{letting $\mathcal I_2$ be the isoperimetric profile of $\mu_2$,}
$$
\int_0^{r_v} \exp\{H_{\mu_2}(A)t\}dt \le \int_0^{r_v} \exp\bc{\frac{\mathcal{I}_2(v)}{v}t}dt \le r_v \exp\bc{\frac{\mathcal{I}_2(v)}{v} r_v }.
$$
Let $f(v)=\frac{\mathcal{I}_2(v)}{v} r_v$. Then, 
$$
\frac{1}{2}-v \le \mu^{+}_2(A) r_v \exp\bc{\frac{\mathcal{I}_2(v)}{v} r_v } \implies f(v) + \log f(v) \ge \log\pa{\frac{1}{2v}-1}.
$$
Then, $f(v) \ge b(v)$, where $b(v)$ is the unique solution of $x+\log x- \log(\frac{1}{2v}-1)=0$. We have for $v \in (0, \frac{1}{2})$ that
\begin{align*}
\mathcal{I}_2(v) \ge v b(v)\frac{1}{r_v} \ge  v \frac{b(v)}{\log(1/v)}\frac{\log(1/v)}{r_v} &= \frac{v}{\Ch(\mu_1)}\frac{b(v)}{\log(1/v)}\frac{\log(1/v)}{\log(\frac{1}{v})+2D-\log2} \\ &\ge \frac{v}{\Ch(\mu_1)}\frac{b(v)}{\log(1/v)}\frac{\log 2}{2D}.
\end{align*}
Since $\mu_2$ is log-concave, $\mathcal{I}_2(v)$ is increasing on $[0, \frac{1}{2}]$. Therefore, for $v \in (0, \frac{1}{2}]$,
$$
\mathcal{I}_2(v) \ge \frac{1}{\Ch(\mu_1)} \frac{\log 2}{2D} 
\sup_{\lambda \in (0,v]} \frac{\lambda b(\lambda)}{\log(1/\lambda)}
$$
It is elementary to check that $
\sup_{\lambda \in (0,v]} \frac{\lambda b(\lambda)}{\log(1/\lambda)}
\ge c v $ for some universal constant $c>0$. 
Thus, 
$$
\mathcal{I}_2(v) \ge c\frac{v}{\Ch(\pi_1) D}, \quad\forall v \in (0, \frac{1}{2}].
$$
By the symmetry of the isoperimetric profile, 
$$
\mathcal{I}_2(v) \ge \frac{c}{\Ch(\pi_1)D}\min\{v, 1-v\}, \quad \forall v \in (0,1).
$$
The cases with $v=1$ and $v=0$ trivially hold. We prove the lemma by recalling the definition of $\Ch(\mu_2)$.
\end{proof}

\subsection{Proof of Lemma 4.4} \label{a:proof-main}
The method of using conductance-based arguments and isoperimetric inequalities to analyze the mixing of Markov chains can be found in \cite{cousins2014cubic, dwivedi2019log, narayanan2016randomized, chen2018fast, lovasz1999hit, lovasz2007geometry, lovasz2004hit, mou2019sampling}. \cite{chewi2023log} generalizes the argument in \cite{dwivedi2019log} to  Cheeger-type isoperimetric inequalities. 
\begin{proof} [Proof of Lemma 4.4]
In order to fit in the conductance-based argument, we need the isoperimetric inequalities to be in the “integral” form. Specifically, consider any measurable partition of the state space $\mathbb{R}^d=S_1 \sqcup S_2 \sqcup S_3$. We define $$r=d(S_1, S_2)=\inf \{\|x-y\|_2 : x\in S_1, y\in S_2\}.$$ Integrating both sizes of the Cheeger-type isoperimetric inequality from $0$ to $r$ yields 
$$
\int_0^r \pi^{+}(S_1^\omega) d\omega \ge \int_0^r \frac{1}{\Ch(\pi)} \min\{\pi(S_1^\omega), \pi((S_1^\omega)^c) \} d \omega.
$$
The definition of Minkowski content $\pi^{+}(S_1^\omega)=\lim_{\epsilon \rightarrow 0} \frac{\pi((S_1^\omega)^\epsilon)-\pi(S_1^\omega)}{\epsilon}$ implies that \\$\int_0^r \pi^{+}(S_1^\omega) d\omega = \pi(S_1^r) - \pi(S_1) = \pi(S_1^r \setminus S_1)$. It follows from $S_1 \not\subseteq S_1^r \setminus S_1$ and $S_2 \not\subseteq S_1^r \setminus S_1$ that $S_1^r \setminus S_1 \subseteq S_3$, and thus $\pi(S_1^r \setminus S_1) \le \pi(S_3).$ One the other hand, since $S_2 \subseteq (S_1^\omega)^c$, $\min\{\pi(S_1^\omega), \pi((S_1^\omega)^c) \} \ge \min\{\pi(S_1), \pi(S_2) \}$ for all $\omega \le r$. Therefore, 
\begin{align}\label{e:iso-integral}
    \pi(S_3) \ge \frac{r}{\Ch(\pi)} \min\{\pi(S_1), \pi(S_2)\}.
\end{align}

In order to lower bound the conductance, we need to study the probability flows across all measurable partitions. Consider an arbitrary partition $\mathbb{R}^d=A_1 \sqcup A_2$ and define the bad sets in $A_1$ and $A_2$ by 
\begin{align*}
    B_1=\bc{u \in \mathbb{R}^d: \mathcal{P}_u(A_2) \le \revise{\frac{h}{2}}}\\
    B_2=\bc{v \in \mathbb{R}^d: \mathcal{P}_v(A_1) \le \frac{h}{2}}
\end{align*}
We regard the rest as the good set $G=\mathbb{R}^d \setminus(B_1 \cup B_2)$. 
\paragraph*{The Good Case: $\pi(B_1) \le \frac{1}{2} \pi(A_1)$ or $\pi(B_2) \le \frac{1}{2}\pi(A_2)$} WLOG, assume $\pi(B_1) \le \frac{1}{2} \pi(A_1)$. Then, 
\begin{align*}
   \int_{A_1} \mathcal{P}_u(A_2) d\pi(u) &\ge \int_{A_1 \setminus B_1} \mathcal{P}_u(A_2) d\pi(u) \ge_{(i)} \frac{h}{2} \pi(A_1 \setminus B_1) \\&\ge_{(ii)} \frac{h}{4} \pi(A_1) \ge \frac{h}{4} \min\{\pi(A_1), \pi(A_2)\},
\end{align*}

where (i) is by the definition of $B_1$ and (ii) is by $\pi(B_1) \le \frac{1}{2} \pi(A_1)$. 

\paragraph*{The Bad Case: $\pi(B_1) \ge \frac{1}{2} \pi(A_1)$ and $\pi(B_2) \ge \frac{1}{2}\pi(A_2)$.} We have 
\begin{align*}
\int_{A_1} \mathcal{P}_u(A_2) du &= \frac{1}{2}\pa{\int_{A_1} \mathcal{P}_u(A_2) d\pi(u)+\int_{A_2} \mathcal{P}_v(A_1) d\pi(v)} \\
& \ge \frac{1}{2}\pa{\int_{A_1 \setminus B_1} \mathcal{P}_u(A_2) d\pi(u)+\int_{A_2 \setminus B_2} \mathcal{P}_v(A_1) d\pi(v)} \\
& \ge \revise{\frac{h}{4}} \pa{\pi(A_1 \setminus B_1) + \pi(A_2 \setminus B_2)} =  \frac{h}{4}\pi(G).
\end{align*} Then, substituting $S_1=B_1, S_2=B_2$, and $S_3=G$ into the integral form of the isoperimetric inequality~\eqref{e:iso-integral}, we have 
\begin{align*}
    \pi(G) \ge \frac{d(B_1, B_2)}{\Ch(\pi)} \min\{\pi(B_1), \pi(B_2)\} \ge \frac{d(B_1, B_2)}{2\Ch(\pi)} \min\{\pi(A_1), \pi(A_2)\}.
\end{align*}
The one-step overlap condition makes sure the two bad sets are far apart in Euclidean distance because for any $u \in B_1, v \in B_2$ and 
\begin{align*}
   \TV(\mathcal{P}_u, \mathcal{P}_v) &\ge \mathcal{P}_u(A_1) - \mathcal{P}_v(A_1) = 1 - \mathcal{P}_u(A_2) - \mathcal{P}_v(A_1) \ge 1 - \frac{h}{2}-\frac{h}{2} = 1-h \\&\implies \|u-v\|_2 \ge \Delta. 
\end{align*}
Therefore, 
\begin{align*}
    \int_{A_1} \mathcal{P}_u(A_2) d\pi(u) &\ge \frac{h}{4} \pi(G) \ge   \frac{ d(B_1, B_2)h}{\revise{8}\Ch(\pi)} \min\{\pi(A_1), \pi(A_2)\} \\&\ge   \frac{\Delta h}{\revise{8}\Ch(\pi)} \min\{\pi(A_1), \pi(A_2)\}.
\end{align*}
Combining the two cases, the conductance satisfies 
$$
\Phi = \sup_A \frac{\int_{A} \mathcal{P}_u(A^c) du}{\min\{\pi(A), \pi(A^c)\}} \ge c h\min\bc{2, \frac{\Delta}{\Ch(\pi)} }.
$$
By assuming $\frac{\Delta}{\Ch(\pi)} \le 2$, we prove the lemma. 

\end{proof}
\end{document}